\documentclass[12pt,a4paper]{article}
\usepackage{amsfonts,amsmath,amssymb}
\usepackage{oldlfont}
\usepackage[cp1250]{inputenc}
\usepackage[T1]{fontenc}
\usepackage[width=17cm,height=25cm]{geometry}

\usepackage{color}

\usepackage{pgf,tikz}
\usetikzlibrary{arrows,patterns}
\usetikzlibrary{matrix,positioning}

\newtheorem{theorem}{\bf Theorem}[section]
\newtheorem{corollary}[theorem]{Corollary}
\newtheorem{lemma}[theorem]{Lemma}
\newtheorem{proposition}[theorem]{Proposition}

\newtheorem{remark}[theorem]{Remark}

\newtheorem{question}[theorem]{Question}

\newcommand{\proof}{\noindent{\bf Proof.\ }}
\newcommand{\qed}{\hfill $\square$ \bigskip}

\begin{document}

\title{Toll number of the Cartesian and the lexicographic product of graphs}

\author{Tanja Gologranc $^{a,b}$ \and
Polona Repolusk $^{a}$}

\maketitle

\begin{center}
$^a$ Faculty of Natural Sciences and Mathematics, University of Maribor, Slovenia\\
{\tt tanja.gologranc1@um.si\\
polona.repolusk@um.si}\\
\medskip

$^b$ Institute of Mathematics, Physics and Mechanics, Ljubljana, Slovenia\\
\end{center}

\begin{abstract}

Toll convexity is a variation of the so-called interval convexity.  A tolled walk $T$ between $u$ and $v$ in $G$ is a walk of the form $T: u,w_1,\ldots,w_k,v,$ where $k\ge 1$, in which $w_1$ is the only neighbor of $u$ in $T$ and $w_k$ is the only neighbor of $v$ in $T$. As in geodesic or monophonic convexity, toll interval between $u,v\in V(G)$ is a set $T_G(u,v)=\{x\in V(G)\,:\,x \textrm{ lies on a tolled walk between } u \textrm{\, and\,} v\}$. A set of vertices $S$ is toll convex, if $T_{G}(u,v)\subseteq S$ for all $u,v\in S$.
First part of the paper reinvestigates the characterization of convex sets in the Cartesian product of graphs. Toll number and toll hull number of the Cartesian product of two arbitrary graphs is proven to be 2. The second part deals with the lexicographic product of graphs. It is shown that if $H$ is not isomorphic to a complete graph, $tn(G \circ H) \leq 3\cdot tn(G)$. We give some necessary and sufficient conditions for $tn(G \circ H) = 3\cdot tn(G)$. Moreover, if
% and if equality holds, then $tn(G)=2.$ On the other hand, if %$tn(G)=|Ext(G)|=2$, $tn(H)>2$ and $N(u)\cap N(v)=\emptyset$ holds for any extreme vetrices $u,v$ of $G$, then $tn(G \circ H)=3tn(G)$.
$G$ has at least two extreme vertices, a complete characterization is given.
Also graphs with  $tn(G \circ H)=2$ are characterized - this is the case iff $G$ has an universal vertex and $tn(H)=2$.
Finally, the formula for $tn(G \circ H)$ is given - it is described in terms of the so-called toll-dominating triples.

	\bigskip\noindent \textbf{Keywords:} toll convexity, toll number, graph product\\

	\bigskip\noindent {\bf 2010 Mathematical Subject Classification}: 05C12, 52A01 05C76\\

\end{abstract}

\newpage

\section{Introduction}\label{Int}

Theory of convex structures developed from the classical convexity in Euclidean spaces and resulted in the {\it abstract convexity} theory. It is based on three natural conditions, imposed on a family of subsets of a given set. All three axioms hold in the so-called {\it interval convexity} which was emphasized in~\cite{vel-93} as one of the most natural ways for introducing convexity. An interval $I:X \times X \rightarrow 2^X$ has the property that $x,y\in I(x,y)$, and convex sets are defined as the sets $S$ in which all intervals between elements from $S$ lie in $S$. Several interval structures have been introduced in graphs and the interval function $I$ is usually defined by a set of paths of a certain type between two vertices. In this way, shortest paths yield geodesic intervals, induced paths yield monophonic intervals and each type of interval give rise to the corresponding convexity, see~\cite{chmusi-05,Pela} for some basic types of intervals/convexities.

There are many properties that were investigated in different interval convexities. One of the most natural arises from the abstract convexity theory, i.e.\ convex geometry property. The problem is whether a given convexity presents a convex geometry (i.e.\ enjoys the Minkowski--Krein--Milman property) which is related to rebuilding convex sets from extremal elements. In the case of monophonic convexity exactly chordal graphs are convex geometries, while in the geodesic convexity these are precisely Ptolemaic graphs (i.e.\ distance-hereditary chordal graphs), see~\cite{fj-86}.
Graph convexity, for which exactly the interval graphs are convex geometry, was investigated and introduced in~\cite{abg}. As interval graphs were investigated, authors used the concept from~\cite{al-14}, where interval graphs were characterized in terms of tolled paths. In~\cite{abg} the authors defined a toll convexity and proved that in this convexity the interval graphs are precisely the graphs which are convex geometry. Toll convexity arises from tolled walks, which are a generalization of monophonic paths, as any monophonic path is also a tolled walk. The paper also consider other properties of toll convexity that were already investigated in terms of other types of convexities. They are focused on two standard invariants with respect to toll convexity, the toll number and the t-hull number of a graph, that arise from similar invariants in terms of geodesic convexity, i.e.\ the geodetic number and the hull number of a graph.  

The geodetic and the hull number of a graph are two graph theoretic parameters introduced about 30 years ago~\cite{Evertt85,hlt} and intensively studied after that, see~\cite{CHZ02,cpz,Pela} for more results on this topic. Both invariants were also studied in graph products~\cite{bkh, bkt,chm, CC} and in terms of other types of convexities~\cite{cz,stg}. In~\cite{abg} both invariants were introduced in terms of toll convexity and it was proved that if $G$ is an interval graph, then toll number and t-hull number coincide with the power of the set of extreme vretices of $G$. They also studied toll number and t-hull number of trees. In this paper we will focus in these two invariants on the Cartesian and the lexicographic product of two graphs.  

In the following section we present main definitions and results form~\cite{abg} that will be needed all over the paper. In Section~\ref{s:Car} we focus in the toll number and t-hull number of the Cartesian product of graphs. We first present a counterexample to the characterization of t-convex sets in the Cartesian product from~\cite{abg} in which the authors missed one condition. Then we fix the characterization and use the result to prove that the $t$-hull number of the Cartesian product of two arbitrary non-complete graphs equals 2. Then we study toll number of the Cartesian product of two arbitrary graphs and deduce that it also equals 2. In Section~\ref{s:Lex} we again use the result from~\cite{abg} and prove that t-hull number of the lexicographic product of two connected non-trivial graphs $G$ and $H$, where $H$ is not complete, equals 2. Then we prove some bounds for the toll number of the lexicographic product of two graphs and give some necessary and sufficient conditions for  $tn(G \circ H)=3 \cdot tn(G)$. 
If $G$ has two extreme vertices (i.e. $|Ext(G)|=2$), we give a complete characterization of graphs with $tn(G \circ H) = 3\cdot tn(G)$. 
We also characterize graphs with $tn(G \circ H)=2$ and finally we establish a formula that expresses the exact toll number of $G \circ H$ using the new concept, obtained from the same idea as geodominating triple in~\cite{bkt}.

%%%%%%%%%%%%%%%%%%%%%%%%%%%%%%%%%%%%%%%%%%%%%%%%%%%%%%%%%%%%%%%%%%%%%%%%%%%%%%%%%%%%%%%%%%%%%%%

\section{Preliminaries}\label{Pre}

Graphs in this paper will be undirected, without loops or multiple edges.
Let $G$ be a graph. The {\it distance} $d_{G}(u,v)$ between vertices $u,v\in V(G)$ is the length of a shortest path between $u$ and $v$ in $G.$  
The {\it{geodesic interval}} $I_{G}(u,v)$ between vertices $u$ and $v$ is the set of all vertices that lie on some shortest path between $u$ and $v$ in $G$, i.e. $I_{G}(u,v)=\{x\in V(G)\,:\,d_G(u,x)+d_G(x,v)=d_G(u,v)\}$.
A subset $S$ of $V(G)$ is {\it{geodesically convex}} (or {\it g-convex}) if $I_{G}(u,v)\subseteq S$ for all $u,v\in S$. Let $S$ be a set of vertices of a graph $G$. Then the geodetic closure $I_G \left[ S \right]$ is the union of geodesic intervals between all pairs of vertices from $S$, that is, $I_G \left[ S \right] = \bigcup_{u,v \in S}I_G(u,v)$. A set $S$ of vertices of $G$ is a {\it geodetic set} in $G$ if $I_G \left[ S \right] = V (G)$. The size of a minimum geodetic set in a graph $G$ is called the {\it geodetic number} of $G$ and denoted by $g(G)$. Given a subset $S \subseteq V(G)$, the {\it convex hull} $\left[ S \right]$ of $S$ is the smallest convex set that contains $S$. We say that $S$ is a {\it hull set} of $G$ if $\left[ S \right]=V(G)$. The size of a minimum hull set of $G$ is the {\it hull number} of $G$, denoted by $hn(G)$. Indices above may be omitted, whenever the graph $G$ is clear from the context. 

There are also many other graph convexities such as monophonic convexity, all-path convexity, Steiner convexity and so on, which are also interesting in terms of the smallest sets whose closure is the whole vertex set and in terms of hull numbers (with respect to the chosen convexity). For more detailes on this topic see surveys~\cite{bkt-10,cpz}, the book~\cite{Pela} and the paper~\cite{Steiner-spa}. In this paper, these two invariants will be investigated in terms of the so called toll convexity.

Let $u$ and $v$ be two different non-adjacent vertices in $G$. A {\it tolled walk} $T$ between $u$ and $v$ in $G$ is a sequence of vertices of the form $$T: u,w_1,\ldots,w_k,v,$$ where $k\ge 1$, which enjoys the following three conditions:

\begin{itemize}
\item $w_iw_{i+1}\in E(G)$ for all $i$,
\item $uw_i\in E(G)$ if and only if $i=1$,
\item $vw_i\in E(G)$ if and only if $i=k$.
\end{itemize}

In other words, a tolled walk is any walk between $u$ and $v$ such that $u$ is adjacent only to the second vertex of the walk and $v$ is adjacent only to the second-to-last vertex of the walk. For $uv\in E(G)$ we let $T:u,v$ be a tolled walk as well and the only tolled walk that starts and ends in the same vertex $v$ is $v$ itself. 
We define $T_G(u,v)=\{x\in V(G)\,:\,x \textrm{ lies on a tolled walk between } u \textrm{\, and\,} v\}$ to be the {\it toll interval} between $u$ and $v$ in $G$. Finally, a subset $S$ of $V(G)$ is {\it{toll convex}} (or {\it t-convex}) if $T_{G}(u,v)\subseteq S$ for all $u,v\in S$. The {\it toll closure} $T_G[S]$ of a subset $S\subseteq V(G)$ is the union of toll intervals between all pairs of vertices from $S$, i.e. $T_G[S]=\cup_{u,v\in S} T_G(u,v)$. If $T_G[S]=V(G)$, we call $S$ a {\it toll set} of a graph $G$. The order of a minimum toll set in $G$ is called the {\it toll number} of $G$ and is denoted by $tn(G)$. Again, when graph is clear from the context, indices may be omitted.

A {\it t-convex hull} of a set $S\subseteq V(G)$ is defined as the intersection of all t-convex sets that contain $S$ and is denoted by $[S]_t$. A set $S$ is a {\it t-hull set} of $G$ if its t-convex hull $[S]_t$ coincides with $V(G)$. The {\it t-hull number} of $G$ is the size of a minimum t-hull set and is denoted by $th(G)$.
Given the toll interval $T\,:\,V \times V \rightarrow 2^V$ and a set $S\subset V(G)$ we define $T^k(S)$ as follows: $T^0(S)=S$ and $T^{k+1}(S)=T(T^k(S))$ for any $k\geq 1$. Note that $[S]_t= \bigcup_{k \in \mathbb{N}} T^k(S)$. From definitions we immediately infer that every toll set is a t-hull set, and hence $th(G)\leq tn(G)$. Since every geodetic set is a toll set, we have $tn(G)\leq g(G)$.

A vertex $s$ from a convex set $S$ is an {\em extreme vertex} of $S$, if $S-\{s\}$ is also convex. Thus also extreme vretices can be defined in terms of different graph convexities. In terms of geodesic and monophonic convexity, the extreme vertices are exactly simplicial vertices, i.e.\ vertice whose closed neighborhoods induce complete graphs. For toll convexity, any extreme vertex is also a simplicial but the converse is not necessary true, see~\cite{abg}. The set of all extreme vertices of a graph $G$ will be denoted by $Ext(G)$. The set of extreme vertices (with respect to toll convexity) is contained in any toll set of $G$ and even more, it is contained in any t-hull set of $G$, i.e.\ $|Ext(G)| \leq th(G) \leq tn(G)$. The assertion holds also in other convexities~\cite{CHZ02,Evertt85}. Graph $G$ with $|Ext(G)|=tn(G)$ is called {\it{extreme complete.}}

Recall that for all of the standard graph products, the vertex set of the product of graphs $G$ and $H$ is equal to $V(G)\times V(H)$. In the {\it{lexicographic product}} $G\circ H$ (also denoted by $G[H]$), vertices $(g_{1},h_{1})$ and $(g_{2},h_{2})$ are adjacent if either $g_{1}g_{2}\in E(G)$ or ($g_{1}=g_{2}$ and $h_{1}h_{2}\in E(H)$). In the
\emph{Cartesian product} $G\Box H$ of graphs $G$ and $H$ two vertices $(g_{1},h_{1})$ and $(g_{2},h_{2})$ are adjacent when ($g_1g_2\in E(G)$ and $h_1=h_2$) or ($g_1=g_2$ and $h_1h_2\in E(H)$).

Let $G$ and $H$ be graphs and $*$  be one of the two graph products under consideration. For a vertex $h\in V(H)$, we call the set $G^{h}=\{(g,h)\in V(G * H):g\in
V(G)\}$ a $G$-\emph{layer} of $G * H$. By abuse of notation we will also consider $G^{h}$ as the corresponding induced subgraph. Clearly $G^{h}$ is isomorphic to $G$. For $g\in V(G)$, the $H$-\emph{layer} $^g\!H$ is defined as $^g\!H =\{(g,h)\in V(G * H)\,:\,h\in V(H)\}$. We will again also consider $^g\!H$ as an induced subgraph and note that it is isomorphic to $H$. A map $p_{G}:V(G * H)\rightarrow V(G)$, $p_{G}(g,h) = g$ is the \emph{projection} onto $G$ and $p_{H}:V(G * H)\rightarrow V(H)$, $p_{H}(g,h) = h$ the \emph{projection} onto $H$. We say that $G * H$ is \emph{non-trivial} if both factors are graphs on at least two vertices. For more details on graph
products see~\cite{ImKl}. 

Let $G$ be a connected graph. A vertex $v\in V(G)$ is a {\it cut vertex}, if $G-\lbrace v \rbrace$ is not connected. A set $S \subseteq V(G)$ is a {\it separating set} if $G-S$ is not connected. 

\vspace{2mm}

Finally, we mention two useful results proved in~\cite{abg}, a characterization of a vertex $v$ from a tolled walk and a characterization of t-convex set in a graph $G$ using separating sets.

\begin{lemma}\cite[Lemma 2.3]{abg} 
\label{l:abg} A vertex $v$ is in some tolled walk between two non-adjacent vertices $x$ and $y$ if and only if $N[x] - \{v\}$ does not separate $v$ from $y$ and $N[y] - \{v\}$ does not separate $v$ from $x$.
\end{lemma}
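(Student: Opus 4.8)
The plan is to establish both directions of this equivalence; throughout I assume $v \notin \{x,y\}$, since the endpoint cases are immediate and the separation conditions are tailored to interior vertices. The forward implication is the routine one. Suppose $v$ lies on a tolled walk $T\colon x, w_1, \ldots, w_k, y$, and write $v = w_j$. The defining properties of a tolled walk force $x$ to be non-adjacent to each of $w_2, \ldots, w_k$, and $x$ is non-adjacent to $y$ by assumption; hence the sub-walk $w_j, w_{j+1}, \ldots, w_k, y$ meets $N[x]$ in at most the single vertex $w_j = v$. Consequently it is a walk from $v$ to $y$ that survives in $G - (N[x] - \{v\})$, which shows that $N[x] - \{v\}$ does not separate $v$ from $y$. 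The mirror-image argument on the initial segment $x, w_1, \ldots, w_j = v$ shows that $N[y] - \{v\}$ does not separate $v$ from $x$.

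For the converse I would build a tolled walk by splicing two paths supplied by the hypothesis. Choose a path $P_2\colon x = u_0, u_1, \ldots, u_m = v$ in $G - (N[y] - \{v\})$ and a path $P_1\colon v = t_0, t_1, \ldots, t_n = y$ in $G - (N[x] - \{v\})$; these exist precisely because of the two non-separation conditions, and $m, n \ge 1$ as $v \neq x, y$. By construction no vertex of $\{u_0, \ldots, u_{m-1}\}$ lies in $N[y]$ and no vertex of $\{t_1, \ldots, t_n\}$ lies in $N[x]$; in particular $y$ has no neighbour among $u_0, \ldots, u_{m-1}$ and $x$ has no neighbour among $t_1, \ldots, t_n$. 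The naive concatenation $x, u_1, \ldots, u_m = v = t_0, t_1, \ldots, t_n = y$ is a walk through $v$, but it need not be tolled: $x$ may still be adjacent to several of $u_2, \ldots, u_m$, and $y$ to several of $t_0, \ldots, t_{n-2}$.

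The decisive step removes these two obstructions by a double shortcut that I claim can be performed independently on the two ends. Let $i^{*}$ be the largest index with $x u_{i^{*}} \in E(G)$ and $j^{*}$ the smallest index with $y t_{j^{*}} \in E(G)$; both exist since $x u_1, y t_{n-1} \in E(G)$. I then take
\[
W\colon\; x,\, u_{i^{*}},\, u_{i^{*}+1},\, \ldots,\, u_m = v = t_0,\, t_1,\, \ldots,\, t_{j^{*}},\, y .
\]
Consecutive vertices of $W$ are adjacent by the choice of $i^{*}$ and $j^{*}$ and because the middle portion is read off $P_2$ and $P_1$. Maximality of $i^{*}$ rules out any neighbour of $x$ among $u_{i^{*}+1}, \ldots, u_m = v$, and $x$ already has no neighbour among the $t$-vertices, so $x$ is adjacent only to the second vertex $u_{i^{*}}$. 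Dually, minimality of $j^{*}$ rules out any neighbour of $y$ among $v = t_0, t_1, \ldots, t_{j^{*}-1}$, and $y$ has no neighbour among the remaining $u$-vertices, so $y$ is adjacent only to the second-to-last vertex $t_{j^{*}}$. Since $v = u_m = t_0$ survives both shortcuts, $v$ lies on $W$, and $W$ is the desired tolled walk.

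The main obstacle I anticipate is exactly the simultaneous control of the two toll conditions: I must be certain that trimming the $u$-prefix to tame the neighbours of $x$ cannot create a fresh forbidden adjacency through the retained $t$-vertices, and symmetrically for $y$. The two disjointness facts --- that no $t_i$ $(i \ge 1)$ sits in $N[x]$ and no $u_i$ $(i \le m-1)$ sits in $N[y]$ --- are what decouple the two ends and let the extremal choices $i^{*}$ and $j^{*}$ be made without interference. I would also verify the degenerate configurations $i^{*} = m$ and $j^{*} = 0$, in which $v$ is promoted to the second, respectively second-to-last, vertex of $W$ rather than a strictly interior one; the three tolled-walk conditions are readily seen to persist in these cases as well.
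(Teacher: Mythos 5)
This lemma is not proved in the paper at all --- it is stated with a citation to~\cite{abg} --- and your argument is correct and essentially the standard one used there: extract the two paths guaranteed by the non-separation hypotheses, trim the first at the last neighbour of $x$ and the second at the first neighbour of $y$, and splice them at $v$, the two disjointness facts ensuring the trims do not interfere. The only point glossed over is in the forward direction: besides non-adjacency, one should note that $x$ and $y$ cannot recur as interior vertices of a tolled walk (otherwise a walk-neighbour of such an occurrence would violate the ``only the second/second-to-last vertex'' condition), so the two sub-walks indeed avoid all of $N[x]-\{v\}$ and $N[y]-\{v\}$, not just $N(x)$ and $N(y)$; this is a one-line check.
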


\begin{proposition}\cite[Proposition 2.4]{abg}
\label{p:separate} Let $G$ be a graph. A subset $C$ of $V(G)$ is t-convex if and only if for every $x,y\in C$ and every $v\in V(G)-C$ the set $N[x]-\{v\}$ separates $v$ from $y$ or the set $N[y]-\{v\}$ separates $v$ from $x$.
\end{proposition}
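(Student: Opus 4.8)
The plan is to obtain this characterization as an almost immediate consequence of Lemma~\ref{l:abg}, since $t$-convexity is defined through toll intervals and that lemma already translates membership of a vertex in a toll interval into a statement about separating sets. The first step is to rewrite the definition of a $t$-convex set in a pointwise form: a set $C\subseteq V(G)$ is $t$-convex precisely when $T_G(x,y)\subseteq C$ for all $x,y\in C$, which is the same as saying that for every pair $x,y\in C$ and every vertex $v\in V(G)-C$ the vertex $v$ does \emph{not} lie on any tolled walk between $x$ and $y$. In this form the target condition is quantified over exactly the same triples $(x,y,v)$ as the desired separating-set condition, so it remains only to match the per-triple statements.

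For a fixed triple with $x,y$ non-adjacent, Lemma~\ref{l:abg} asserts that $v$ lies on some tolled walk between $x$ and $y$ if and only if $N[x]-\{v\}$ does not separate $v$ from $y$ and $N[y]-\{v\}$ does not separate $v$ from $x$. Negating both sides (De Morgan) gives the equivalent statement that $v$ lies on \emph{no} tolled walk between $x$ and $y$ if and only if $N[x]-\{v\}$ separates $v$ from $y$ or $N[y]-\{v\}$ separates $v$ from $x$. Substituting this equivalence into the pointwise reformulation from the first step yields both implications of the proposition at once: the forward direction, because $t$-convexity forces $v\notin T_G(x,y)$ and hence the separating disjunction; and the converse, because the separating disjunction forces $v\notin T_G(x,y)$ for every outside $v$, so that $T_G(x,y)\subseteq C$ and $C$ is $t$-convex.

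The only genuine obstacle is the boundary cases that Lemma~\ref{l:abg} does not cover, namely $x=y$ and adjacent $x,y$, and these I would check directly against the convention adopted for the word ``separates''. When $xy\in E(G)$ the only tolled walk between $x$ and $y$ is the edge $x,y$, so $T_G(x,y)=\{x,y\}\subseteq C$ and no outside vertex $v$ can lie on a tolled walk; this already settles the converse direction trivially. For the forward direction one must verify that the separating disjunction holds automatically in this case, using that $y\in N[x]-\{v\}$ (and symmetrically $x\in N[y]-\{v\}$) whenever $v\neq x,y$, so that under the convention ``$S$ separates $v$ from $y$ whenever no $v$--$y$ path avoids $S$'' the vertex $v$ is indeed cut off from $y$ by $N[x]-\{v\}$. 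The degenerate case $x=y$, where $T_G(x,x)=\{x\}$, is handled in the same manner. Once these small cases are reconciled with the definition of separation, the equivalence holds for all triples $(x,y,v)$ and the proposition follows.
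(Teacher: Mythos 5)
Your proposal is correct: the paper itself gives no proof of Proposition~\ref{p:separate} (it is quoted from~\cite{abg}), and your derivation---rewriting t-convexity pointwise as ``no $v\in V(G)-C$ lies in any $T_G(x,y)$,'' negating Lemma~\ref{l:abg} for non-adjacent pairs, and checking the adjacent and $x=y$ cases under the ``every $v,y$-path meets $S$'' convention for separation---is exactly the natural route from the stated Lemma~\ref{l:abg} and matches how the result is obtained in the source. Your explicit handling of the boundary cases (where $y\in N[x]-\{v\}$ makes the separation trivial) is the right way to reconcile the convention, so there is no gap.
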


\section{The Cartesian product}\label{s:Car}

In this section we prove that the t-hull number of the Cartesian product of two arbitrary non-trivial graphs equals 2. We give two different proofs for this result: it follows from the characterization of t-convex sets in the Cartesian product and also from the computation of the toll number. The main result of this section says that the toll number of the Cartesian product of two arbitrary non-trivial graphs equals 2.

We start with the following theorem from~\cite{abg}.

\begin{theorem}\cite[Theorem 5.3]{abg}\label{th:car} Let $G\Box H$ be a non-trivial, connected Cartesian product. A proper subset $Y$ of $V(G\Box H)$ which does not induce a complete graph is t-convex if and only if $Y=V(G_1)\times V(H_1)$ where one factor, say $H_1$, equals $H$, which is a complete graph, and $G_1$ is isomorphic to $P_k$ where every inner vertex of the path has degree 2 in $G$.
\end{theorem}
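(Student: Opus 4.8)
The plan is to characterize t-convexity entirely through the separation criterion of Proposition~\ref{p:separate}, exploiting the product structure of closed neighborhoods: for a vertex $(g,h)$ of $G\Box H$ one has $N[(g,h)]=(N_G[g]\times\{h\})\cup(\{g\}\times N_H[h])$, so that whenever a factor, say $H$, is complete, $N[(g,h)]$ contains the entire $H$-layer ${}^{g}\!H$. This is the engine that converts separation in the product into separation inside a single factor, where Lemma~\ref{l:abg} and Proposition~\ref{p:separate} can be read off directly.

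For the forward direction I would first show that a t-convex proper non-complete $Y$ must be a \emph{box}, i.e.\ $Y=V(G_1)\times V(H_1)$ with $G_1=p_G(Y)$ and $H_1=p_H(Y)$. Given $(a,b),(c,d)\in Y$, I would exhibit the missing corners $(a,d),(c,b)$ on tolled walks between $(a,b)$ and $(c,d)$ via Lemma~\ref{l:abg}; t-convexity then drags them into $Y$, and iterating yields the product form. Next, using that $Y$ is proper and does not induce a complete graph, I would argue that at least one projection is the whole factor: if both $G_1\subsetneq V(G)$ and $H_1\subsetneq V(H)$, an external vertex can be placed on a tolled walk between two corners of the box, contradicting Proposition~\ref{p:separate}. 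Say $H_1=V(H)$. I would then show $H$ is complete, for otherwise two non-adjacent vertices of $H$ lie in a common layer ${}^{g}\!H\cong H$ inside $Y$ and their toll interval escapes $Y$. Finally, the path structure of $G_1$ comes from applying the separation criterion to external $G$-vertices: an external vertex adjacent to an inner vertex of $G_1$, or an inner vertex of $G$-degree larger than $2$, would supply a tolled walk entering the box through its interior, again violating Proposition~\ref{p:separate}; this forces $G_1\cong P_k$ with every inner vertex of degree $2$ in $G$.

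For the backward direction I would take $Y=V(G_1)\times V(H)$ with $H$ complete and $G_1=g_1\cdots g_k$ a path whose inner vertices have degree $2$ in $G$, and verify Proposition~\ref{p:separate} by hand. Fix $x=(g_i,h),y=(g_j,h')\in Y$ with $i\le j$ and $v=(g,h'')\notin Y$, so $g\notin V(G_1)$. Since every inner vertex of $G_1$ has degree $2$, no external vertex of $G$ is adjacent to an inner vertex, hence any $g$--to--$G_1$ walk must enter the path through an endpoint $g_1$ or $g_k$. Because $H$ is complete we have $N[x]-\{v\}\supseteq {}^{g_i}\!H$ and $N[y]-\{v\}\supseteq {}^{g_j}\!H$, and a full layer ${}^{g_m}\!H$ separates the product precisely when $g_m$ separates the corresponding vertices in $G$. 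I would then try to show that for every external $g$, either $g_i$ separates $g$ from $g_j$ or $g_j$ separates $g$ from $g_i$, which would give one of the two required separations.

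The step I expect to be the main obstacle is exactly this last separation check. The degree-$2$ hypothesis forbids external vertices from touching the interior of the path, but it does \emph{not} by itself prevent the endpoints $g_1$ and $g_k$ from being joined by a path lying outside $G_1$. If such a detour exists, an external vertex can reach $g_j$ from the far endpoint $g_k$ while avoiding $g_i$, so that neither $N[x]-\{v\}$ nor $N[y]-\{v\}$ separates and the argument stalls. Pushing the backward direction through therefore appears to require controlling how $g_1$ and $g_k$ attach to the rest of $G$ (for instance, ruling out such an outside $g_1$--$g_k$ connection), a hypothesis not present in the statement as written; isolating and imposing exactly this missing condition is where I would concentrate the remaining effort.
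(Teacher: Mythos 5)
Your diagnosis of the obstacle in the backward direction is not a weakness of your argument --- it is a genuine flaw in the statement itself, and you have found it exactly. The theorem as quoted (Theorem 5.3 of~\cite{abg}) is false; this paper's Section~\ref{s:Car} is devoted to precisely that point. Figure~\ref{protiprimer} exhibits the counterexample $C_5 \Box K_3$: the set $Y$ consisting of three consecutive vertices of $C_5$ times $V(K_3)$ satisfies every condition of the statement (a box, one factor complete, the other inducing a path whose inner vertex has degree $2$), yet it is not t-convex, because the two endpoints of the path are joined by the detour around the rest of the cycle --- exactly the outside $g_1$--$g_k$ connection you flagged. A tolled walk can leave the box at one endpoint, travel along that detour, and re-enter at the other endpoint, picking up external vertices on the way, so no separation in the sense of Proposition~\ref{p:separate} can hold. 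Hence no proof of the backward direction as stated can exist, and your refusal to push it through is the correct outcome.

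The paper's corrected characterization, Theorem~\ref{th:convex}, adds precisely the hypothesis you proposed to impose: $P$ must be the \emph{only} $v_1,v_k$-path in $G$. With that hypothesis your backward-direction plan becomes essentially the paper's proof: uniqueness of the path forces $d_G(u,v_i)\neq d_G(u,v_j)$ for any external vertex $u$, and assuming $d_G(u,v_i)<d_G(u,v_j)$ one shows $v_i$ is a cut vertex separating $u$ from $v_j$ in $G$; since $H$ is complete, the layer $^{v_i}\!H$ is contained in $N[(v_i,h)]-\{(u,v)\}$ and separates $(u,v)$ from $(v_j,h')$ in the product, so Proposition~\ref{p:separate} applies. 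Your forward-direction sketch matches the part of the~\cite{abg} argument that the paper retains as correct (box structure, complete factor, degree-$2$ path); note only that the corrected theorem also needs uniqueness of the path as a \emph{necessary} condition, which the paper proves by building a tolled walk that exits the box along a hypothetical second $v_1,v_k$-path and re-enters at the far endpoint, violating t-convexity.
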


This theorem could be useful in solving the problem of t-hull number of the Cartesian product graphs, as the only possibility for the existence of a non-trivial, proper t-convex set in the product is that one factor is a complete graph.
The problem is, that the authors in~\cite{abg} missed one additional condition in the characterization of the t-convex sets in the Cartesian product graph, i.e.\ the necessary condition for $Y \subseteq V(G \Box H)$ being t-convex is too weak. Figure \ref{protiprimer} shows a counterexample to the Theorem~\ref{th:car}. In a graph $C_5 \Box K_3$, a set of black vertices %$V(P_3 \Box K_3)$ 
is obviously not t-convex, eventhough it fulfills desired properties of the theorem.

\begin{figure}[h]
\centering
\begin{tikzpicture}[scale=1]
\foreach \x  in {0,1,2}
	{
	\draw (0,\x) -- (4,\x);
	\draw (0,\x) to [bend right=20] (4,\x);
	}
\foreach \x  in {0,1,2,3,4}
	{
	\draw (\x,0) -- (\x,2);
	\draw (\x,0) to [bend left=20] (\x,2);
	}

\foreach \x  in {0,4}
	\foreach \y  in {0,1,2}
		{
		\filldraw [fill=white, draw=black,thick] (\x,\y) circle (3pt);
		}
\foreach \x  in {1,2,3}
	\foreach \y  in {0,1,2}
		{
		\filldraw [fill=black, draw=black,thick] (\x,\y) circle (3pt);
		}
\end{tikzpicture}
\caption{A counterexample for the Theorem 5.3 in \cite{abg}.\label{protiprimer}}
\end{figure}
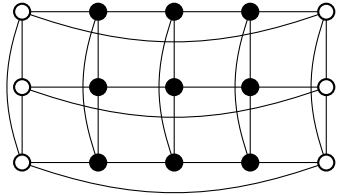

The proof from~\cite{abg} that the condition of the characterization is necessary is correct, thus we will just tighten the condition of the characterization and use this correct part from~\cite{abg} in our proof, while the direction that the condition is sufficient will be completely proved in this paper.

%\begin{lemma}
%\cite{abg} A vertex $v$ is in some tolled walk between two non-adjacent vertices $x$ and $y$ if and only if $N\left[x\right]- \lbrace v \rbrace$ does not separate $v$ from $y$ and $N\left[y\right]- \lbrace v \rbrace$ does not separate $v$ from $x$.
%\end{lemma}

Therefore we give a new characterization of t-convex sets in the Cartesian product of graphs.

\begin{theorem}
\label{th:convex} Let $G\Box H$ be a non-trivial, connected Cartesian product. A proper subset $Y$ of $V(G\Box H)$ which does not induce a complete graph is t-convex if and only if $Y=V(G_1)\times V(H_1)$ where one factor, say $H_1$, equals $H$, which is a complete graph, and $G_1$ induces a path $P=v_1, \ldots , v_k$ in $G$ where every inner vertex of $P$ has degree 2 in $G$ and $P$ is the only $v_1,v_k$-path in $G$.
\end{theorem}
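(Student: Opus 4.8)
The plan is to split the biconditional into the two implications flagged by the paper's own roadmap. For necessity I would import the correct half of the argument from~\cite{abg}, which already forces a proper non-complete t-convex set $Y$ to have the form $Y=V(G_1)\times V(H)$ with $H$ complete and $G_1$ inducing a path $P=v_1,\dots,v_k$ whose inner vertices have degree $2$ in $G$; it then remains only to add the newly required condition that $P$ is the unique $v_1,v_k$-path. I argue this contrapositively. Assuming a second $v_1,v_k$-path $Q\neq P$ exists, I first observe that, because every inner vertex of $P$ has degree $2$, any $v_1,v_k$-path that uses an inner vertex of $P$ is forced to coincide with $P$; hence $Q$ is internally disjoint from $V(G_1)$ and has at least one internal vertex $q_1\notin V(G_1)$ with $v_1q_1\in E(G)$. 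Taking $x=(v_1,h_1)$ and $y=(v_k,h_2)$ with $h_1\neq h_2$, I exhibit that $v=(q_1,h_2)\notin Y$ lies on a tolled walk between $x$ and $y$: the layer-$h_2$ walk along $Q$ shows that $N[x]-\{v\}$ does not separate $v$ from $y$, while the short walk $(q_1,h_2),(q_1,h_1),(v_1,h_1)$ shows that $N[y]-\{v\}$ does not separate $v$ from $x$, so Lemma~\ref{l:abg} places a vertex outside $Y$ on a tolled walk between two vertices of $Y$, contradicting t-convexity.

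For sufficiency I would verify the separation criterion of Proposition~\ref{p:separate} directly, and the heart of the matter is to convert the unique-path hypothesis into a clean cut-vertex structure in $G$. I first prove a structural lemma: since $G$ is connected, the inner vertices of $P$ are adjacent only to their two path-neighbors, and $P$ is the only $v_1,v_k$-path, the graph obtained from $G$ by deleting the inner vertices of $P$ has exactly two components $C_1\ni v_1$ and $C_k\ni v_k$, and every vertex outside $V(G_1)$ lies in one of them; moreover each $v_i$ with $1\le i\le k$ is a cut vertex of $G$ separating the ``$v_1$-side'' $L_i$ (containing $C_1$ together with $v_2,\dots,v_{i-1}$) from the ``$v_k$-side'' $R_i$ (containing $C_k$ together with $v_{i+1},\dots,v_{k-1}$). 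The unique-path hypothesis is exactly what is used here, and it is precisely what fails in the $C_5\Box K_3$ counterexample.

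I then lift this to the product: if $u$ is a cut vertex of $G$ separating vertex sets $A$ and $B$, then the full layer $\{u\}\times V(H)$ separates $A\times V(H)$ from $B\times V(H)$ in $G\Box H$, because any path in the product projects to a walk in $G$ that must pass through $u$. Now take arbitrary $x=(v_i,h_1),\,y=(v_j,h_2)\in Y$ and $v=(c,h_3)\in V(G\Box H)-Y$, so that $c\notin V(G_1)$; by the symmetry $v_1\leftrightarrow v_k$ I may assume $c\in C_1$. If $j>i$, then $v_i$ separates $c$ from $v_j$ in $G$, so the layer $\{v_i\}\times V(H)\subseteq N[x]-\{v\}$ (note that $c\neq v_i$) separates $v$ from $y$; if $i>j$, the symmetric argument shows that $N[y]-\{v\}$ separates $v$ from $x$; and if $i=j$, then $x$ and $y$ are adjacent since $H$ is complete, whence $T_G(x,y)=\{x,y\}\subseteq Y$ and there is nothing to check. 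In every case the criterion of Proposition~\ref{p:separate} holds, so $Y$ is t-convex.

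I expect the main obstacle to be the structural lemma and its correct deployment: one must argue carefully that the unique-path condition really yields the two-component decomposition and that each $v_i$ is a genuine cut vertex (including the endpoint cases $i=1$ and $i=k$, which require $C_1$ respectively $C_k$ to contain an outside vertex), and one must manage the bookkeeping of the degenerate adjacent case $i=j$ together with the choice of which disjunct of Proposition~\ref{p:separate} to invoke, according to the relative order of $i$ and $j$ and the side on which $c$ lies. The product-lifting step and the explicit tolled-walk construction in the necessity direction are then routine verifications.
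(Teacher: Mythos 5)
Your proposal follows the same architecture as the paper's proof: necessity by importing the correct half of~\cite{abg} and deriving a contradiction from a hypothetical second $v_1,v_k$-path, and sufficiency by converting the unique-path hypothesis into cut vertices of $G$, lifting each cut vertex $v_i$ to the layer $^{v_i}\!H$ (which lies inside $N[(v_i,h)]$ because $H$ is complete), and invoking Proposition~\ref{p:separate}. The differences are in execution and are sound, indeed arguably cleaner: for necessity you certify $(q_1,h_2)\in T_{G\Box H}((v_1,h_1),(v_k,h_2))$ via the two non-separation conditions of Lemma~\ref{l:abg} (the paper instead takes a shortest alternative path and writes down an explicit tolled walk), and for sufficiency you prove one global decomposition of $G$ instead of the paper's per-triple distance argument ($d_G(u,v_i)\neq d_G(u,v_j)$, shortest path through $v_1$, concatenation trick). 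I checked the necessity step in detail and it is correct, including the observation that a second path must be internally disjoint from $V(G_1)$.

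There is, however, one step that fails as written: your structural lemma is false when $k=2$, i.e.\ when $P$ is a single edge $v_1v_2$. This is a legitimate case of the theorem, since $\{v_1,v_2\}\times V(H)$ does not induce a complete graph in $G\Box H$. In that case $P$ has no inner vertices, so ``$G$ minus the inner vertices of $P$'' is $G$ itself, which is connected; your sets $C_1$ and $C_k$ both equal $V(G)$, and the deployment ``$c\in C_1$ and $j>i$ implies $v_i$ separates $c$ from $v_j$'' is then simply false for a vertex $c$ attached on the $v_2$-side. The repair is immediate: uniqueness of the $v_1,v_2$-path says exactly that the edge $v_1v_2$ is a bridge, so define $C_1$ and $C_k$ as the components of $G$ after deleting the inner vertices \emph{and the edges} of $P$ (for $k\ge 3$ this agrees with your definition, since every edge of $P$ is then incident to a deleted vertex); every path from the $v_1$-side to $v_2$ must cross the bridge and hence pass through $v_1$, and the rest of your argument goes through verbatim. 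Apart from this edge case the structural lemma is correct for $k\ge 3$ and provable exactly along the lines you indicate (a path leaving $C_1$ can first meet the deleted inner path only at a neighbor of a degree-2 vertex lying in $C_1$, and the only such vertex is $v_1$), so nothing else is missing; but since the $k=2$ case is covered by the statement, the lemma must be restated before the proof is complete.
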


\proof 
Let $Y$ be a proper t-convex subset of $V(G \Box H)$, which does not induce a complete graph. From~\cite{abg} it follows that $Y=V(G_1)\times V(H_1)$ where one factor, say $H_1$, equals $H$, which is a complete graph, and $G_1$ induces a path $P=v_1, \ldots , v_k$ in $G$ where every inner vertex of the path has degree 2 in $G$. 

First observe that the length of any $v_1,v_k$-path $R$ in $G$ different from $P$ is at least 2. If $P$ contains just two vertices $v_1$ and $v_2$ then $v_1$ and $v_2$ are adjacent and any $v_1,v_2$ path different from $P$ contains at least three vertices. Otherwise, if $P$ contains more than two vertices then $v_1$ and $v_k$ are not adjacent in $G$, as $P$ induces a path in $G$. Thus the length of $R$ is at least two.

Suppose that there exists a $v_1,v_k$-path in $G$ different from $P$. Let $R=v_1,u_2,\ldots ,$ $u_l,v_k$ be such path with the shortest length and let $h$ and $h'$ be arbitrary different vertices from $H$. As $H$ is a complete graph $h$ and $h'$ are adjacent in $H$. Since $R \neq P$ and all inner vertices of $P$ have degree 2 in $G$, $R \cap P =\{v_1,v_k\}.$ Thus $(u_i,h) \notin Y$ for any $i \in \{1,\ldots , l\}$ and $(v_1,h),(u_2,h), (u_2,h'), (u_3,h'), \ldots ,$ $ (u_l,h'),(v_k,h')$ is a tolled walk that violates t-convexity of $Y$.

For the converse suppose that $Y = V (P \Box H)$ where $H \cong K_n$, $P$ is the only $v_1,v_k$-path in $G$ and all inner vertices of $P$ have degree 2 in $G$. Let $P = v_1,v_2, \ldots , v_k$. Since $H$ is a complete graph $T_{G\Box H}((v_i,h),(v_i,h'))=\{(v_i,h),(v_i,h')\} \subseteq Y$ for any $v_i \in P$ and $h,h'\in V(H)$. Thus let $(v_i,h),(v_j,h')$ be arbitrary vertices from $V(P \Box H)$, where $v_i,v_j \in P$, $i < j$. Let $(u,v)$ be an arbitrary vertex from $V(G \Box H)-Y$. We will prove that $N[(v_i,h)]-\{(u,v)\}$ separates $(u,v)$ from $(v_j,h')$ or $N[(v_j,h')]-\{(u,v)\}$ separates $(u,v)$ from $(v_i,h).$
Since $P$ is the only $v_1,v_k$-path in $G$, $d_G(u,v_i) \neq d_G(u,v_j).$ Thus we may assume without loss of generality that $d_G(u,v_i)<d_G(u,v_j).$ Let $R$ be a shortest $u,v_i$ path in $G$. Since $d_G(u,v_i)<d_G(u,v_j)$, $v_j\notin R$. As all inner vertices of $P$ are of degree 2, $R$ contains at least one vertex from $\{v_1,v_k\}$. Since $i<j$ and $v_i \in R$ but $v_j \notin R$ it is necessary that $v_1 \in R$ and $v_k \notin R$. Suppose that there exist $u,v_j$-path $P'$ in $G$ that does not contain $v_i$. Then $P'$ contains $v_k$, as all inner vertices of $P$ have degree 2 in $G$. Since $P$ is the only $v_1,v_k$-path in $G$, $P'$ does not contain $v_1.$ Therefore the walk obtained with concatenation of a shortest $v_1,u$-path and $u,v_k$-subpath of $P'$ can be reduced to $v_1,v_k$-path different from $P$, a contradiction. Thus $v_i$ is a cut vertex that separates $u$ from $v_j$ and consequently $^{v_i}\!H $ is a separating set in $G \Box H$ that separates $(u,v)$ from $(v_j,h')$. Since $H$ is a complete graph, $^{v_i}\!H  \subseteq N[(v_i,h)]-\{(u,v)\}$ and hence $N[(v_i,h)]-\{(u,v)\}$ separates $(u,v)$ from $(v_j,h')$ which by Proposition~\ref{p:separate} yields that $Y$ is t-convex. \qed

\begin{corollary}
Let $G\Box H$ be a connected Cartesian product, where $G$ and $H$ are not isomorphic to complete graph. Then $th(G \Box H)=2.$ 
\end{corollary}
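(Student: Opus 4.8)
The plan is to establish the corollary by exhibiting an explicit $t$-hull set of size two and invoking the lower bound. Since the $t$-hull number satisfies $th(G \Box H) \geq 2$ for any non-trivial connected graph (a single vertex is trivially $t$-convex, so its $t$-hull cannot be the whole vertex set unless $G \Box H$ is a single point), the whole task reduces to producing two vertices whose $t$-convex hull is all of $V(G \Box H)$. The key tool is Theorem~\ref{th:convex}: under the hypothesis that neither $G$ nor $H$ is complete, \emph{no} proper subset $Y$ that fails to induce a complete graph can be $t$-convex. Indeed, the characterization forces one factor of $Y$ to equal a complete factor of the product, but here both $G$ and $H$ are non-complete, so this possibility is excluded outright. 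Hence every proper $t$-convex set of $G \Box H$ must induce a complete graph.

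\textbf{First I would} choose two vertices $x,y$ and argue that their $t$-convex hull $[\{x,y\}]_t$ is the whole vertex set. The strategy is to pick $x$ and $y$ so that $[\{x,y\}]_t$ is guaranteed not to induce a complete graph; then by the observation above it cannot be a proper subset, so it must equal $V(G \Box H)$. A natural choice is a pair of vertices lying in a common $G$-layer (or $H$-layer) that are already non-adjacent, together with the fact that the product is connected and non-trivial. For instance, if one takes $x = (g_1, h)$ and $y = (g_2, h)$ with $g_1, g_2$ non-adjacent in $G$ (such a pair exists since $G$ is non-complete), the toll interval $T_{G \Box H}(x,y)$ will already pick up vertices in several layers: tolled walks between $x$ and $y$ can step out into $H$-layers and return, producing vertices with differing second coordinates. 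One must check that the resulting hull induces something strictly larger than a clique.

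\textbf{The main obstacle} I anticipate is verifying that the two chosen seed vertices actually generate a non-complete set under iterated toll intervals, rather than getting stuck inside a clique. The cleanest route is to show directly that $[\{x,y\}]_t$ contains two non-adjacent vertices — then it cannot induce a complete graph, and Theorem~\ref{th:convex} finishes the argument immediately with no further computation. Since $G$ and $H$ are each non-complete and the product is connected, one can locate non-adjacent vertices inside the hull by using Lemma~\ref{l:abg} to certify that a suitable third vertex lies on a tolled walk between $x$ and $y$; the neighborhood/separation condition there is easy to satisfy because non-completeness of both factors gives enough ``room'' for a vertex to avoid being a universal-type obstruction. A secondary technical point is confirming the lower bound $th(G \Box H) \geq 2$, but this is routine: no singleton is a $t$-hull set of a graph with more than one vertex.

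\textbf{Finally}, assembling these gives $th(G \Box H) = 2$: the lower bound is immediate, and the upper bound follows since the explicit pair $\{x,y\}$ has $t$-convex hull equal to $V(G \Box H)$, because that hull is not a complete-graph-inducing set and hence, by Theorem~\ref{th:convex}, is not proper.
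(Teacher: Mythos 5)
Your proposal is correct and takes essentially the same approach as the paper: both arguments pick two non-adjacent vertices and apply Theorem~\ref{th:convex} to conclude that any t-convex set containing them cannot induce a complete graph, hence cannot be proper, so their t-convex hull is all of $V(G\Box H)$ and $th(G\Box H)=2$. Your anticipated ``main obstacle'' of the hull getting stuck inside a clique is vacuous, since your seed vertices $(g_1,h),(g_2,h)$ are already non-adjacent and lie in the hull --- exactly the shortcut you yourself identify as the cleanest route, and the one the paper uses.
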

\proof
Let $(g,h),(g',h')$ be arbitrary non-adjacent vertices in $G \Box H.$ Since $G$ and $H$ are not complete graphs, it follows from Theorem \ref{th:convex} that the only proper t-convex sets in $G \Box H$ are singletons. Thus the smallest t-convex set containing $(g,h),(g',h')$ is the whole vertex set $V(G \Box H).$ Therefore $th(G\Box H)=2.$ \qed

The above result holds also when one or both factors are complete graphs not isomorphic to $K_1$. In the rest of this section we will prove that it holds even more, that $tn(G\Box H)=2$ for any two non-trivial graphs $G$ and $H$.

\begin{lemma}\label{presecna}
Every non-complete connected graph $G$ has at least two distinct non-adjacent vertices that are not cut vertices.
\end{lemma}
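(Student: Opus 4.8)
The plan is to split into two cases according to whether $G$ contains a cut vertex, and in both cases to exploit the block structure of $G$ to locate the two required vertices. The starting point is the well-known fact that every connected graph on at least two vertices possesses at least two vertices that are not cut vertices; but this is not enough on its own, since those non-cut vertices may well be adjacent. Forcing non-adjacency is the delicate point, and the block decomposition is exactly what I would use to guarantee it.

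First I would treat the case where $G$ is $2$-connected, i.e.\ has no cut vertex at all. Here every vertex of $G$ is a non-cut vertex, so the statement reduces to exhibiting two non-adjacent vertices, which is immediate because $G$ is not complete.

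The main case is when $G$ has at least one cut vertex. Here I would pass to the block--cut tree of $G$, whose nodes are the blocks and the cut vertices of $G$. Since $G$ has a cut vertex, this tree is non-trivial and hence has at least two leaves, which are necessarily block-nodes; these correspond to two distinct end blocks $B_1$ and $B_2$, each containing at most one cut vertex of $G$. As $G$ is connected on at least two vertices, every block has at least two vertices, so each end block contains a vertex that is not a cut vertex of $G$; call them $u\in B_1$ and $w\in B_2$. Since a non-cut vertex belongs to exactly one block, $u$ and $w$ lie in the distinct blocks $B_1$ and $B_2$ and are therefore distinct.

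Finally I would verify non-adjacency, which is the heart of the argument and the step I expect to be the main obstacle. If $u$ and $w$ were adjacent, the edge $uw$ would lie in a single block $B$ of $G$, whence $u,w\in B$. But $u$ belongs only to $B_1$ and $w$ only to $B_2$, forcing $B=B_1=B_2$, which contradicts $B_1\neq B_2$. Hence $u$ and $w$ are non-adjacent non-cut vertices, completing the proof. The only points to check with care are the elementary incidence facts about blocks on which everything rests: that a non-cut vertex lies in exactly one block and that each edge lies in exactly one block.
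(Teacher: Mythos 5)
Your proof is correct, but it takes a genuinely different route from the paper's. The paper uses a short, self-contained distance argument: pick a diametral pair $u,v$ with $d(u,v)=\mathrm{diam}(G)$; non-completeness forces $\mathrm{diam}(G)\geq 2$, so $u$ and $v$ are non-adjacent, and neither can be a cut vertex, since a vertex $w$ separated from $v$ by $u$ would give $d(v,w)=d(v,u)+d(u,w)>\mathrm{diam}(G)$, a contradiction. You instead argue via the block--cut tree: in the case with no cut vertex, non-completeness alone provides a non-adjacent pair; otherwise two leaf blocks each supply a vertex that is not a cut vertex of $G$, and non-adjacency follows because an edge lies in a unique block while each of your two vertices lies in a unique (and distinct) block. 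All the structural facts you invoke (leaves of the block--cut tree are blocks, an end block contains at most one cut vertex, every block has at least two vertices, a non-cut vertex lies in exactly one block, an edge lies in exactly one block) are standard and correctly applied, so there is no gap. Comparing the two: the paper's diametral-pair argument is shorter, needs nothing beyond the notion of distance, and in fact proves the stronger statement that \emph{every} diametral pair consists of non-adjacent non-cut vertices. Your argument carries more machinery, but it cleanly isolates where non-completeness is actually needed --- only in the $2$-connected case; your second case shows that any connected graph possessing a cut vertex (such a graph is automatically non-complete) has two non-adjacent non-cut vertices located in distinct end blocks, which is a slightly more structural conclusion than the lemma demands.
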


\begin{proof}
Let $u,v$ be arbitrary vertices of $G$ with $d(u,v)=diam(G).$ Since $G$ is not a complete graph, $u$ and $v$ are not adjacent. We will prove that $u$ and $v$ are not cut vertices. To the contrary, let us assume that $u$ is a cut vertex.  Let $C_1$ be a connected  component of $G - \lbrace u \rbrace$ containing $v$ and let $w\in V(G)-\lbrace u,v \rbrace$ be a vertex in another component of $G - \lbrace u \rbrace$, say $C_2$. As $u$ is a cut vertex, every $v,w$-path contains $u$. Therefore $d(v,w)=d(v,u)+d(u,w)>diam(G)$, a contradiction. In the same way one can prove that $v$ is not a cut vertex, which completes the proof. \qed
\end{proof}

A well-known characterization of cut vertices says that a vertex $v$ of a connected graph $G$ is a cut vertex of $G$ if and only if there exist vertices $u,w \in V(G)-\lbrace v \rbrace$ such that  $v$ lies on every $u,w$-path in $G$. As we will use vertices, which are not cut vertices, very often, we characterize them in the following lemma:

\begin{lemma}\label{osn}
A vertex $v\in V(G)$ is not a cut vertex, if and only if for every pair of distinct vertices $u,w$ different from $v$, there is a $u,w$-path avoiding $v$.
\end{lemma}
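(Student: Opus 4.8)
The plan is to prove the biconditional directly from the definition of a cut vertex, observing that a $u,w$-path \emph{avoiding} $v$ is precisely a $u,w$-path in the induced subgraph $G-\{v\}$. With this observation the statement becomes a verbatim reformulation of the connectedness of $G-\{v\}$, so no machinery beyond the definitions is needed. (Alternatively, the claim is exactly the contrapositive of the well-known characterization of cut vertices recalled just above the lemma, so one could simply negate both sides of that equivalence; but the direct argument from the definition is shorter and self-contained.)

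For the forward direction I would assume that $v$ is not a cut vertex, which by definition means $G-\{v\}$ is connected. Then, given any two distinct vertices $u,w\in V(G)\setminus\{v\}$, both lie in $G-\{v\}$, and connectedness supplies a $u,w$-path within $G-\{v\}$; since this path uses no vertex outside $G-\{v\}$, it is a $u,w$-path avoiding $v$, as required. For the converse I would assume that every pair of distinct vertices $u,w$ different from $v$ is joined by a $u,w$-path avoiding $v$. Each such path lies entirely inside $G-\{v\}$, so any two vertices of $G-\{v\}$ are connected in $G-\{v\}$; hence $G-\{v\}$ is connected and $v$ is not a cut vertex.

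There is essentially no hard step here; the only point that needs care is the identification of ``path avoiding $v$'' with ``path in $G-\{v\}$,'' together with the standing assumption that $G$ is connected (implicit in the definition of a cut vertex as given in the Preliminaries), which guarantees that $V(G)\setminus\{v\}$ is nonempty of the size one expects and that the statement is vacuous only in the degenerate cases. Thus the whole proof reduces to unwinding the definition of connectedness of $G-\{v\}$ in both directions.
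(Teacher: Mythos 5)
Your proof is correct and matches the paper's treatment: the paper states this lemma without any proof, regarding it as an immediate reformulation of the definition of a cut vertex (equivalently, the contrapositive of the well-known characterization recalled just before the lemma), which is exactly the definitional unwinding you carry out. Nothing further is needed.
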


In the following theorem we will prove that the toll interval between $(x,y)$ and $(u,v)$ in $G \Box H$ is a whole vertex set if $x,y$ are two non-adjacent vertices in $G$, $u,v$ are two non-adjacent vertices in $H$ and none of these four vertices is a cut vertex. Let us first explain why we need two non-adjacent vertices that are not cut vertices. In Figure~\ref{interval} we have the Cartesian product $P \Box Q$ of two paths isomorphic to $P_4$ where $P=a_1,a_2,a_3,a_4$ and $Q=b_1,b_2,b_3,b_4.$ It is easy to see that $(a_1,b_1) \notin T_{P\Box Q}((a_2,b_2),(a_4,b_4)).$ The problem is that there is no $a_1,a_4$-path avoiding $a_2$ which means that any tolled $(a_1,b_1),(a_4,b_4)$-walk will contain a neighbour of $(a_2,b_2).$  

\begin{figure}[h]
\centering
\begin{tikzpicture}[scale=1]
\foreach \x  in {0,1,2,3}
	{
	\draw (0,\x) -- (3,\x);
	}
\foreach \x  in {0,1,2,3}
	{
	\draw (\x,0) -- (\x,3);
	}

\foreach \x  in {0,1,2,3}
	\foreach \y  in {0,1,2,3}
		{
		\filldraw [fill=white, draw=black,thick] (\x,\y) circle (3pt);
		}

\draw(0,-0.5) -- (3,-0.5);
\draw(0,-1) node {$a_1$}; \draw(1,-1) node {$a_2$}; \draw(2,-1) node {$a_3$}; \draw(3,-1) node {$a_4$};

\draw(-0.5,0) -- (-0.5,3);
\draw(-1,0) node {$b_1$}; \draw(-1,1) node {$b_2$}; \draw(-1,2) node {$b_3$}; \draw(-1,3) node {$b_4$};

\foreach \x  in {0,1,2,3}
	\foreach \y in {1,2,3}
		{
		\filldraw [fill=brown, draw=black,thick] (\y,\x) circle (3pt);
		}		
\foreach  \x in {1,2,3}
{
	\filldraw [fill=brown, draw=black,thick] (0,\x) circle (3pt);
}		
\filldraw [fill=black, draw=black,thick] (1,1) circle (3pt);
\filldraw [fill=black, draw=black,thick] (3,3) circle (3pt);

\foreach \x in {0,1,2,3}
{
\filldraw [fill=white, draw=black,thick] (-0.5,\x) circle (3pt);
}	
\foreach \x in {0,1,2,3}
{
\filldraw [fill=white, draw=black,thick] (\x,-0.5) circle (3pt);
}		
\end{tikzpicture}
\caption{Vertex $(a_1,b_1) \notin T_{P\Box Q}((a_2,b_2),(a_4,b_4)).$}\label{interval}
\end{figure}
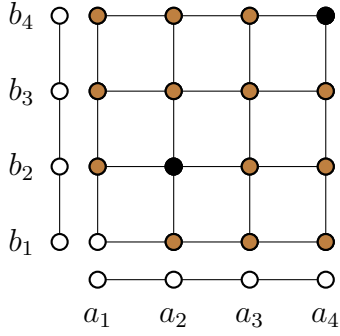

Sometimes we will refer to the shortest of all paths that does not contain vertex $u$ with the \textit{shortest path avoiding $u$.}

Let $G$ and $H$ be connected graphs, $x,y\in V(G)$, $h \in V(H)$ and $P=x,g_1,\ldots,g_k,y$ an $x,y$-path in $G$. Then by $P \times \lbrace h \rbrace$ we denote the path $(x,h), (g_1,h),(g_2,h),\ldots,(g_k,h),(y,h)$ between $(x,h)$ and $(y,h)$ in $G\Box H$. Similarly, a path $\lbrace u \rbrace \times Q$ is defined for $u \in V(G)$ and a path $Q$ between $u,v \in V(H)$.

Let $P=x,g_1,\ldots,g_k,y$ be an $x,y$-path and $Q=y,h_1,\ldots,h_l,z$ a $y,z$-path of a graph $G$. Then $P\cup Q$ denotes an $x,z$-walk that we get by concatenating paths $P$ and $Q$, i.e. a path $x,g_1,\ldots,g_k,y,h_1,\ldots,h_l,z$.

\begin{theorem}\label{kartezicni}
Let $G$ and $H$ be connected, non-complete graphs. Then $tn(G \Box H)=2$.
\end{theorem}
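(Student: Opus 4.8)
The plan is to exhibit a two-element toll set of $G \Box H$ explicitly, by choosing the four coordinate-vertices provided by Lemma~\ref{presecna}. Concretely, since $G$ is non-complete and connected, pick two non-adjacent non-cut vertices $x,y \in V(G)$; since $H$ is non-complete and connected, pick two non-adjacent non-cut vertices $u,v \in V(H)$. I would then claim that $S = \{(x,u),(y,v)\}$ is a toll set, i.e.\ that $T_{G\Box H}((x,u),(y,v)) = V(G\Box H)$. Observe first that $(x,u)$ and $(y,v)$ are non-adjacent in $G\Box H$ (they differ in both coordinates), so the notion of tolled walk between them is the non-trivial one.

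The core of the argument is to show that every vertex $(a,b) \in V(G\Box H)$ lies on some tolled walk between $(x,u)$ and $(y,v)$. The natural construction is to build such a walk by moving first within a $G$-layer and then within an $H$-layer (or interleaving the two coordinate motions), using the fact that the four chosen vertices are not cut vertices to reroute around the ``forbidden'' neighbors. Recall the tolled-walk constraints: the walk must touch a neighbor of $(x,u)$ only at its second vertex and a neighbor of $(y,v)$ only at its second-to-last vertex. The role of Lemma~\ref{osn} is exactly to guarantee that the detours needed to satisfy these constraints exist: because $x$ is not a cut vertex of $G$, any two vertices of $G$ different from $x$ can be joined by a path avoiding $x$, and similarly for $y$, $u$, $v$. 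This is what lets me steer the walk so that after leaving the initial neighbor of $(x,u)$ it never returns to $N[(x,u)]$ prematurely, and symmetrically for the terminus.

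In practice I would split into cases according to whether the target vertex $(a,b)$ lies in the same $G$-layer or $H$-layer as one of the endpoints, or in ``general position.'' In the generic case, the idea is to route from $(x,u)$ into the layer $G^{u}$ or $^{x}\!H$, pass through $(a,b)$, and arrive at $(y,v)$, at each stage replacing a path through a cut-like vertex by a path avoiding it via Lemma~\ref{osn}. An alternative, cleaner route to the same conclusion is to invoke Lemma~\ref{l:abg}: it suffices to show, for each $(a,b)$, that $N[(x,u)] - \{(a,b)\}$ does not separate $(a,b)$ from $(y,v)$ and that $N[(y,v)] - \{(a,b)\}$ does not separate $(a,b)$ from $(x,u)$ in $G\Box H$. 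The product structure makes these non-separation statements tractable: removing the closed neighborhood of $(x,u)$ minus one vertex leaves the remaining $G$- and $H$-layers richly connected precisely because $x$ and $u$ are non-cut vertices, so a connecting path survives.

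The main obstacle I anticipate is the bookkeeping at the endpoints of the tolled walk: ensuring that no interior vertex of the constructed walk is adjacent to $(x,u)$ except the designated second vertex, and likewise for $(y,v)$. Adjacency in the Cartesian product is restrictive—$(a,b)$ is adjacent to $(x,u)$ only when it agrees in one coordinate and is $G$- or $H$-adjacent in the other—so $N[(x,u)]$ is confined to $G^{u} \cup {}^{x}\!H$; the challenge is to keep the walk out of this cross-shaped neighborhood after the first step while still reaching every target. This is where the non-adjacency of $x,y$ and of $u,v$ becomes essential (it gives slack between the two neighborhoods), and where the two counterexamples in Figures~\ref{protiprimer} and~\ref{interval} signal exactly which degeneracies must be excluded. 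I would expect the write-up to handle the boundary cases (target in a layer shared with an endpoint) separately, since there the tolled-walk conditions are most delicate.
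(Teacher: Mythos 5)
Your overall strategy is exactly the paper's: pick non-adjacent non-cut vertices $x,y\in V(G)$ and $u,v\in V(H)$ via Lemma~\ref{presecna}, and show $T_{G\Box H}((x,u),(y,v))=V(G\Box H)$ by building, through each target vertex $(g,h)$, a walk that moves in a $G$-layer and then in an $H$-layer, with Lemma~\ref{osn} supplying paths that avoid the other distinguished vertex. However, there is a genuine gap at precisely the point you yourself flag as ``the main obstacle'': you never say how to guarantee that the constructed walk meets $N[(x,u)]$ only at its second vertex (and $N[(y,v)]$ only at its second-to-last). Lemma~\ref{osn} cannot do this by itself: a path in $G$ from $x$ to $g$ that avoids $y$ may re-enter $N_G(x)$ arbitrarily often, and every such re-entry, traversed at height $u$, produces an interior vertex of the walk adjacent to $(x,u)$, destroying the tolled-walk property. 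Attributing the avoidance of $N[(x,u)]$ to the non-cut property conflates two different issues: non-cut-ness gives \emph{existence} of a detour, not control of adjacencies along it.

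The paper's fix is a minimality trick that is absent from your proposal: take $P_1$ to be a \emph{shortest} $x,g$-path avoiding $y$ (and dually $Q_2$ a shortest $h,v$-path avoiding $u$). If some later vertex of $P_1$ were adjacent to $x$, shortcutting through it would give a strictly shorter path still avoiding $y$; hence only the second vertex of $P_1$ is adjacent to $x$, and consequently only the second vertex of $P_1\times\{u\}$ is adjacent to $(x,u)$ in the product. With this, and the specific ordering $R_1=(P_1\times\{u\})\cup(\{g\}\times Q_1)$ followed by $R_2=(P_2\times\{h\})\cup(\{y\}\times Q_2)$, the non-adjacency of $x$ to $y$ and of $u$ to $v$ makes every remaining adjacency check succeed, and the boundary cases $g\in\{x,y\}$ or $h\in\{u,v\}$ are settled by separate short constructions from shortest (hence tolled) paths. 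Your alternative route via Lemma~\ref{l:abg} is viable in principle, but it requires the same kind of case-by-case path construction and you did not carry it out either; as written, the proposal identifies the correct skeleton and the correct difficulty but does not contain the idea that resolves it.
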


\begin{proof}
As $G$ and $H$ are not isomorphic to a complete graph, by Lemma \ref{presecna} there is a pair of (distinct) non-adjacent vertices in both graphs, $x, y \in V(G)$, such that  $xy \notin E(G)$ and $u,v \in V(H)$, such that $uv \notin E(H)$, and none of them is a cut vertex.
% Without loss of generality we can assume that $d_G(x,y)=d_H(u,v)=2$. 
Then we claim that $$T_{G\Box H}((x,u),(y,v))=V(G\Box H).$$

First let $(g,h)\in V(G \Box H)$ such that $(g,h)\notin (\lbrace x,y \rbrace \times V(H)) \cup (V(G) \times \lbrace u,v\rbrace).$ Let $P_1$ be a shortest $x,g$-path avoiding $y$ and $Q_1$ a shortest $u,h$-path avoiding $v$. Such paths exist because of the Lemma \ref{osn} for $y$ and $v$. Then $R_1=(P_1\times \lbrace u \rbrace) \cup (\lbrace g \rbrace \times Q_1 )$ defines a $(x,u),(g,h)$-path, where the neighbour of $(x,u)$ on $R_1$ is the only neighbour of $(x,u)$ (because $P_1$ is shortest of such paths that avoid $y$) and no vertex from $R_1$ is adjacent to $(y,v)$ as $u$ is not adjacent to $v$ and  $Q_1$ does not contain $y$.% and  $g \notin N_G(x)$).
	
Similarly let $P_2$ be a shortest $g,y$-path avoiding $x$ and $Q_2$ a shortest $h,v$-path avoiding $u$. Then $R_2=(P_2\times \lbrace h \rbrace) \cup (\lbrace y \rbrace \times Q_2 )$ defines a $(g,h),(y,v)$-path, where a neighbour of $(y,v)$ on $R_2$ is the only neighbour of $(y,v)$ on this path (because $Q_2$ is shortest of such paths that avoid $u$) and no vertex from $R_2$ is adjacent to $(x,u)$ as $x$ is not adjacent to $y$ and $P_2$ does not contain $x$.% and  $h \notin N_H(y)$).

Thus $R_1 \cup R_2$ is a tolled walk between $(x,u)$ and $(y,v)$ that contains $(g,h).$

	Let now $g=x$. Similar arguments hold if $g=y$. 
	First let $h=v$. Let $P$ be the shortest $x,y$-path in $G$ and $Q$ the shortest $u,v$-path in $H$. Note that every shortest path is a tolled walk. Then $(\lbrace x \rbrace \times Q) \cup (P \times \lbrace v \rbrace)$ is a desired tolled walk.	 
	 Finally let $h \neq v$. Let $P$ be a shortest $x,y$-path in $G$, $Q_1$ a shortest $u,h$-path in $H$, which avoids $v$ and $Q_2$ a shortest $h,v$-path that avoids $u$ in $H$. Then $(\lbrace x \rbrace \times Q_1) \cup (P\times \lbrace h \rbrace) \cup (\lbrace y \rbrace \times Q_2)$ again is a tolled $(x,u),(y,v)$-walk containing a vertex $(g,h)$. By the symmetry, there also exists a tolled $(x,u),(y,v)$-walk  containing $(g,h)$, when $h=u$ or $h=v$. \qed
\end{proof}

Theorem \ref{kartezicni} excludes cases when at least one factor is a complete graph. 

\begin{proposition}\label{Gcomplete}
Let $n \geq 2$ and let $G$ be a connected non-complete graph. Then $tn(G \Box K_m)=2$.
\end{proposition}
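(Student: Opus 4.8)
The plan is to prove that $tn(G\Box K_m)=2$ by exhibiting two vertices whose toll interval is all of $V(G\Box K_m)$, mirroring the structure of the proof of Theorem~\ref{kartezicni} but exploiting completeness of the second factor. Since $G$ is non-complete and connected, Lemma~\ref{presecna} supplies two distinct non-adjacent vertices $x,y\in V(G)$ that are not cut vertices. In $K_m$ every pair of distinct vertices is adjacent, so there are no non-adjacent vertices to use there; instead I would simply pick two distinct vertices $u,v\in V(K_m)$ (this requires $m\ge 2$, which is given). The candidate toll set is $\{(x,u),(y,v)\}$, and the claim is $T_{G\Box K_m}((x,u),(y,v))=V(G\Box K_m)$.

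The main structural difference from Theorem~\ref{kartezicni} is that $uv\in E(K_m)$, so a tolled walk can no longer rely on $u$ and $v$ being non-adjacent to keep endpoints away from interior vertices; the separation must come entirely from the $G$-coordinate. Concretely, for a target vertex $(g,h)$ with $g\notin\{x,y\}$, I would take $P_1$ a shortest $x,g$-path avoiding $y$ and $P_2$ a shortest $g,y$-path avoiding $x$ (both exist by Lemma~\ref{osn}), and build the walk by moving first within $\{x\}\times V(K_m)$ or $P_1\times\{\cdot\}$ to reach the correct $K_m$-fibre, then traversing $P_1$ in a fibre, then $P_2$ in another fibre. The key observation is that as long as every interior vertex of the walk lies in a $G$-layer whose $G$-coordinate is neither $x$ nor $y$ (except at the designated adjacency points), the only neighbour of $(x,u)$ on the walk is its successor and likewise for $(y,v)$: a vertex $(a,c)$ is adjacent to $(x,u)$ only if $a=x$ or ($a=x$ is dropped and) $ax\in E(G)$ with $c=u$, so controlling the $G$-coordinate and the $K_m$-fibre suffices. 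I would handle the boundary cases $g=x$ (and symmetrically $g=y$) separately, using a shortest $x,y$-path $P$ in $G$ combined with moves inside the fibres $\{x\}\times V(K_m)$ and $\{y\}\times V(K_m)$ to realize any $K_m$-coordinate $h$.

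I expect the main obstacle to be verifying the tolled-walk conditions at the endpoints when the target vertex $(g,h)$ lies in the ``critical'' layers $G^u$ or $G^v$, or in the fibres ${}^x\!K_m$ or ${}^y\!K_m$, since there the $K_m$-coordinate $h$ may coincide with $u$ or $v$ and the usual trick of using non-adjacency in $H$ is unavailable. The resolution is that $x$ and $y$ are not cut vertices in $G$, so I can always route the $G$-part of the walk through $G$-layers on vertices adjacent to neither $x$ nor $y$ prematurely, and use the completeness of $K_m$ to jump freely between fibres at a single $G$-vertex without introducing forbidden adjacencies. Once all cases ($g\notin\{x,y\}$; $g\in\{x,y\}$ with $h\in\{u,v\}$; and $g\in\{x,y\}$ with $h\notin\{u,v\}$) are checked to produce a tolled $(x,u),(y,v)$-walk through $(g,h)$, we conclude $T_{G\Box K_m}((x,u),(y,v))=V(G\Box K_m)$, hence $tn(G\Box K_m)=2$, where the lower bound $tn\ge 2$ is immediate because $G\Box K_m$ is non-complete and connected.
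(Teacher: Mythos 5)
Your proposal is correct and takes essentially the same route as the paper's proof: the paper likewise picks non-adjacent non-cut vertices $x,y\in V(G)$ via Lemma~\ref{presecna} and two distinct $u,v\in V(K_m)$, and shows $T_{G\Box K_m}((x,u),(y,v))=V(G\Box K_m)$ by concatenating a shortest $x,g$-path avoiding $y$ in one $K_m$-fibre, a fibre switch at $g$, and a shortest $g,y$-path avoiding $x$ in another fibre, with the boundary cases $g\in\{x,y\}$ handled exactly as you describe, via a shortest $x,y$-path plus moves inside the fibres.
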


\begin{proof}
Let $x,y$ be arbitrary non-adjacent vertices in $G$ that are not cut vertices (the existence follows from Lemma~\ref{presecna}) and let $u,v$ be two different vertices from $K_n$. Let $(g,h)\in V(G\Box H)-\{(x,u),(y,v)\}$.
If $g=x$, let $P$ be a shortest $x,y$-path in $G$. Then $(x,u) \cup (x,h) \cup (P \times \{h\}) \cup (y,v)$ is a tolled $(x,u),(y,v)$-walk containing $(g,h).$ If $g=y$, a tolled $(x,u),(y,v)$-walk containing $(g,h)$ is obtained in a similar way. Now let $P_1$ be a shortest $x,g$-path avoiding $y$ and $P_2$ a shortest $g,y$-path avoiding $x$.
If $h=u$ (similar arguments give a desired result when $h=v$), then $(P_1 \times \{u\}) \cup (g,v) \cup (P_2 \times \{v\})$ is a tolled $(x,u),(y,v)$-walk containing $(g,h)$. Finally let $g \neq x,y$ and $h\neq u,v$. Then $(P_1\times \{u\}) \cup (g,v) \cup (P_2 \times \{h\}) \cup (y,v)$ is a tolled $(x,u),(y,v)$-walk containing $(g,h)$.  \qed
\end{proof}

\begin{proposition}\label{complete}
Let $n,m\geq 2.$ Then $tn(K_n \Box K_m)=2$.
\end{proposition}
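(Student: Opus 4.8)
The plan is to exhibit a two-element toll set directly. Writing $V(K_n)=\{x,y,\ldots\}$ and $V(K_m)=\{u,v,\ldots\}$, I would fix two vertices $(x,u)$ and $(y,v)$ with $x\neq y$ and $u\neq v$, which is possible since $n,m\ge 2$. These two vertices differ in both coordinates, hence are non-adjacent, so $K_n\Box K_m$ is not complete; as the product has more than one vertex, trivially $tn(K_n\Box K_m)\ge 2$, and it therefore suffices to prove that $T_{K_n\Box K_m}((x,u),(y,v))=V(K_n\Box K_m)$, which immediately yields $tn(K_n\Box K_m)=2$. Note the contrast with Theorem~\ref{kartezicni} and Proposition~\ref{Gcomplete}: here both factors are complete, so there are no cut vertices and no need for ``paths avoiding a vertex'', and any two distinct vertices within a factor are adjacent. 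This abundance of edges is precisely what makes short tolled walks easy to build.

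Next I would run a case analysis on an arbitrary vertex $(g,h)$, guided by whether $g\in\{x,y\}$ and whether $h\in\{u,v\}$. For the generic interior vertex, where $g\notin\{x,y\}$ and $h\notin\{u,v\}$, I propose the walk $(x,u),(g,u),(g,h),(g,v),(y,v)$. The four boundary-type families are handled by shorter walks: for $(g,u)$ and for $(g,v)$ with $g\notin\{x,y\}$ I would use $(x,u),(g,u),(g,v),(y,v)$; for $(x,h)$ and for $(y,h)$ with $h\notin\{u,v\}$ I would use $(x,u),(x,h),(y,h),(y,v)$; and the two vertices adjacent to both endpoints, namely $(x,v)$ and $(y,u)$, are captured by the length-two tolled walks $(x,u),(x,v),(y,v)$ and $(x,u),(y,u),(y,v)$ respectively. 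Each proposed sequence is a walk because consecutive vertices agree in exactly one coordinate, and each plainly contains its target vertex, so together these cover all of $V(K_n\Box K_m)$.

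The only real work, and the step I would be most careful about, is verifying the two toll conditions for each walk: that the second vertex is the \emph{only} neighbour of $(x,u)$ appearing on the walk, and that the second-to-last vertex is the only neighbour of $(y,v)$ appearing on it. These verifications all reduce to the observation that, because $x\neq y$ and $u\neq v$, a listed vertex is adjacent to $(x,u)$ only when it shares exactly one of the coordinates $x$ or $u$, and symmetrically for $(y,v)$; for instance, in the generic walk one checks that neither $(g,h)$ nor $(g,v)$ is adjacent to $(x,u)$ and that neither $(g,h)$ nor $(g,u)$ is adjacent to $(y,v)$, since each of these intermediate vertices differs from the relevant endpoint in both coordinates. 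As the designated first and last interior vertices are the unique listed neighbours of the respective endpoints in every case, all toll conditions hold, completing the proof that $tn(K_n\Box K_m)=2$.
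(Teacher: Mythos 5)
Your proposal is correct and follows essentially the same approach as the paper: pick $(x,u)$ and $(y,v)$ differing in both coordinates and show $T_{K_n\Box K_m}((x,u),(y,v))=V(K_n\Box K_m)$ by exhibiting explicit short tolled walks in a case analysis on whether $g\in\{x,y\}$ and $h\in\{u,v\}$. The only difference is cosmetic (your generic walk passes through $(g,v)$ where the paper's passes through $(y,h)$), and all your walks verify correctly.
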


\begin{proof}
Let $x$ and  $y$ be two different vertices in $V(K_n)$ and $u$ and $v$ two different vertices in $V(K_m)$. Then observe that $(x,u)(y,v)\notin E(K_n\Box K_m)$ and that $T_{G\Box H}((x,u),(y,v))=V(G\Box H).$ To prove the second, let $(g,h)\in V(K_n\Box  K_m)$. Assume first that $g=x$. If $h=v$, then $(x,u),(x,v),(y,v)$ is a tolled walk between $(x,u)$ and $(y,v)$ containing $(g,h)$. Otherwise $(x,u),(x,h),(y,h),(y,v)$ is a desired tolled walk. Similar construction works if $g=y$ or $h \in \lbrace u,v \rbrace$. Now let $g\notin\lbrace x, y\rbrace$ and $h \notin \lbrace u,v \rbrace$. Then $(x,u),(g,u),(g,h),(y,h),(y,v)$ is a tolled walk  between $(x,u)$ and $(y,v)$ containing the vertex $(g,h)$.\qed
\end{proof}

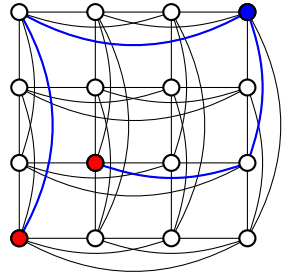
\begin{figure}[h]
\centering
\begin{tikzpicture}[scale=1]
\foreach \x  in {0,1,2,3}
	{
	\draw (\x,0) -- (\x,3);
	\draw (0,\x) -- (3,\x);
	\draw (0,\x) to [bend right=20] (2,\x);
	\draw (1,\x) to [bend right=20] (3,\x);
	\draw (0,\x) to [bend right=30] (3,\x);
	\draw (\x,0) to [bend right=20] (\x,2);
	\draw (\x,1) to [bend right=20] (\x,3);
	\draw (\x,0) to [bend right=30] (\x,3);
	}

\draw[color=blue, thick] (0,0) to [bend right=30] (0,3)  to [bend right=30] (3,3)  to [bend left=20] (3,1)  to [bend left=20] (1,1) ;

\foreach \x  in {0,1,2,3}
	\foreach \y  in {0,1,2,3}
		{
		\filldraw [fill=white, draw=black,thick] (\x,\y) circle (3pt);
		}
\filldraw [fill=red, draw=black,thick] (0,0) circle (3pt);
\filldraw [fill=red, draw=black,thick] (1,1) circle (3pt);
\filldraw [fill=blue, draw=black,thick] (3,3) circle (3pt);
\end{tikzpicture}
\caption{A tolled walk in $K_4 \Box K_4$ containing a blue vertex.\label{polni}}
\end{figure}

\begin{corollary}\label{tollN}
Let $G$ and $H$ be arbitrary connected non-trivial graphs. Then $tn(G \Box H)=2.$
\end{corollary}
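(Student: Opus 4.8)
The plan is to treat this corollary as an immediate consequence of the three preceding results, combined with a trivial lower bound, organized as an exhaustive case analysis on whether each factor is complete. First I would dispose of the lower bound $tn(G\Box H)\geq 2$. Since $G$ and $H$ are non-trivial, the product $G\Box H$ has at least four vertices, and no singleton $\{w\}$ can be a toll set of a graph with more than one vertex: by the convention fixed in the preliminaries, the only tolled walk that starts and ends in $w$ is $w$ itself, so $T_{G\Box H}[\{w\}]=T_{G\Box H}(w,w)=\{w\}\neq V(G\Box H)$. Hence every toll set of $G\Box H$ has at least two elements.

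For the matching upper bound I would split into three cases, using the commutativity $G\Box H\cong H\Box G$ to reduce the asymmetric case. If neither $G$ nor $H$ is complete, then Theorem~\ref{kartezicni} applies directly and gives $tn(G\Box H)=2$. If exactly one factor is complete, I may assume by commutativity that it is the second one, say $H\cong K_m$ with $m\geq 2$; the remaining factor is then connected and non-complete, so Proposition~\ref{Gcomplete} yields $tn(G\Box H)=2$. Finally, if both factors are complete, that is $G\cong K_n$ and $H\cong K_m$ with $n,m\geq 2$, Proposition~\ref{complete} gives $tn(K_n\Box K_m)=2$. These three cases are exhaustive, and in each of them the upper bound $tn(G\Box H)\leq 2$ holds.

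Combining the lower bound with the upper bound obtained in every case yields $tn(G\Box H)=2$, completing the argument. Since the genuine content of the statement was carried entirely by the preceding theorem and two propositions, I do not expect any real obstacle here; the corollary is essentially bookkeeping. The only mild point to verify is that the hypotheses of the invoked results line up exactly, namely that in the mixed case the non-complete factor is indeed connected and non-complete as required by Proposition~\ref{Gcomplete}, and that the appeal to commutativity of the Cartesian product legitimately covers both orders of a single complete factor.
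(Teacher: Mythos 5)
Your proposal is correct and matches the paper's intent exactly: the corollary is stated there as an immediate consequence of Theorem~\ref{kartezicni}, Proposition~\ref{Gcomplete}, and Proposition~\ref{complete}, which together cover the three cases (no complete factor, one complete factor, two complete factors) that you enumerate. Your only addition is spelling out the trivial lower bound $tn(G\Box H)\geq 2$ and the appeal to commutativity, both of which are fine.
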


Since $th(G) \leq tn(G)$ for any graph $G$ Corollary~\ref{tollN} implies the following result.

\begin{corollary}
Let $G$ and $H$ be arbitrary connected non-trivial graphs. Then $th(G \Box H)=2.$
\end{corollary}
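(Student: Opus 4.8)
The plan is to derive the statement directly from the two general facts recorded in the preliminaries together with the just-proven Corollary~\ref{tollN}, being careful this time to land on the \emph{hull} invariant $th$ rather than on $tn$. Recall that for any graph one has the chain $|Ext(G)| \le th(G) \le tn(G)$, where the inequality $th(G) \le tn(G)$ holds because every toll set is automatically a t-hull set (its toll closure is all of $V(G)$, so a fortiori its t-convex hull is). Since $G$ and $H$ are non-trivial, $G \Box H$ is a connected graph on at least four vertices, and Corollary~\ref{tollN} gives $tn(G \Box H) = 2$. Substituting this into the inequality yields the upper bound $th(G \Box H) \le tn(G \Box H) = 2$ at once; no further argument is needed for this direction, precisely because all the case analysis was already carried out in establishing the toll number.

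For the matching lower bound I would argue that no singleton can be a t-hull set of $G \Box H$. For an arbitrary graph and an arbitrary vertex $v$, the only tolled walk that starts and ends in $v$ is $v$ itself, so $T_G(v,v) = \{v\}$, and hence the toll interval operator fixes $\{v\}$: in the notation of the preliminaries $T^k(\{v\}) = \{v\}$ for all $k$, giving $[\{v\}]_t = \bigcup_{k \in \mathbb{N}} T^k(\{v\}) = \{v\}$. Because $G \Box H$ has more than one vertex, $[\{v\}]_t = \{v\} \ne V(G \Box H)$ for every $v$, so no single vertex is a t-hull set and therefore $th(G \Box H) \ge 2$.

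Combining the two bounds gives $th(G \Box H) = 2$, as claimed. I expect no genuine obstacle here: the substance lives entirely in Corollary~\ref{tollN}, whose hard work supplies the upper bound, and in the trivial observation that $th(G) \ge 2$ for any graph on at least two vertices. The only point worth stating explicitly is the inequality $th \le tn$, which is what transfers the toll-number bound to the toll-hull number; the corollary is then simply the meeting of this transferred upper bound with the universal lower bound $th \ge 2$.
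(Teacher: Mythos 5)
Your proof is correct and takes essentially the same route as the paper: the paper derives this corollary in a single line from the inequality $th(G \Box H) \leq tn(G \Box H)$ together with Corollary~\ref{tollN}, leaving the lower bound implicit. Your explicit verification that no singleton is a t-hull set (since $T(v,v)=\{v\}$ forces $[\{v\}]_t=\{v\}$) is a sound and welcome filling-in of that omitted triviality.
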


%%%%%%%%%%%%%%%%%%%%%%%%%%%%%%%%%%%%%%%%%%%%%%%%%%%%%%%%%%%%%%%%%%%%%%%%%%%%%%%%%%%%%%%%%%%%%%%
%%%%%%%%%%%%%%%%%%%%%%%%%%%%%%%%%%%%%%%%%%%%%%%%%%%%%%%%%%%%%%%%%%%%%%%%%%%%%%%%%%%%%%%%%%%%%%%%%

\section{The lexicographic product}\label{s:Lex}

A characterization of t-convex sets of $G \circ H$ from~\cite{abg} is again a natural way to obtain t-hull number of $G \circ H.$ A set $Y$, where $Y\subset V(G\circ H)$, is said to be \emph{non-extreme complete} if $^g\!H \cap Y =\, ^g\!H$ holds for all non-extreme vertices $g$ of $p_G(Y)$.

\begin{theorem}\cite[Theorem 5.1]{abg}\label{lexOld}
Let $G\circ H$ be a non-trivial, connected
lexicographic product. A proper subset $Y$ of $V(G\circ H)$, which
does not induce a complete graph, is t-convex if and only if the
following conditions hold:
\item[(a)] $p_G(Y)$ is t-convex in $G$,
\item[(b)] $Y$ is non-extreme complete, and
\item[(c)] $H$ is complete.
\end{theorem}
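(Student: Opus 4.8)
The plan is to prove both implications, using throughout Lemma~\ref{l:abg} and Proposition~\ref{p:separate} together with two elementary facts about $G\circ H$. First, the closed neighbourhood is $N[(g,h)]=(N_G(g)\times V(H))\cup(\{g\}\times N_H[h])$, which collapses to $N_G[g]\times V(H)$ exactly when $H$ is complete. Second, if two vertices of $G\circ H$ are adjacent their toll interval is just the two endpoints, so only non-adjacent pairs need to be examined. The workhorse is a \emph{lifting principle}: a tolled walk $g_1,s_1,\dots,s_m,g_2$ in $G$ lifts to a tolled walk $(g_1,h_1),(s_1,h^*),\dots,(s_m,h^*),(g_2,h_2)$ in $G\circ H$ for any fixed $h^*$, because the adjacency of an endpoint $(g_i,h_i)$ to an interior vertex depends only on the $G$-coordinate, and $g_i s_j\in E(G)$ holds precisely at the prescribed index.

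For the forward direction I would assume $Y$ is proper, t-convex and not a clique, and prove (c), (a), (b) in that order. The hardest step is (c), which I prove by contradiction: suppose $H$ is not complete. I first establish a \emph{collapse lemma}: if $h_1h_2\notin E(H)$ then $[\{(g,h_1),(g,h_2)\}]_t=V(G\circ H)$. Indeed, the walks $(g,h_1),(g',h'),(g,h_2)$ show that the toll interval already contains the whole layer ${}^{g'}\!H$ for every neighbour $g'$ of $g$; each filled layer again contains a non-adjacent pair (as $H$ is not complete), so the toll hull spreads along the edges of the connected graph $G$ until it engulfs every layer. Consequently a proper t-convex $Y$ can contain no two non-adjacent vertices of a single $H$-layer. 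Now take non-adjacent $x=(g_1,h_1),y=(g_2,h_2)\in Y$; they must lie in different layers, so $g_1g_2\notin E(G)$. Routing a shortest $g_1$–$g_2$ path and varying only the $H$-coordinate at its first interior vertex $p_1$ yields a tolled walk through $(p_1,h)$ for every $h$ (shortest-path minimality guarantees tolledness), whence ${}^{p_1}\!H\subseteq T_{G\circ H}(x,y)\subseteq Y$. This full layer contains a non-adjacent pair, contradicting the collapse lemma; hence $H$ is complete.

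Once $H$ is complete, (a) and (b) follow from the lifting principle. For (a), given $g\in T_G(g_1,g_2)$ with $g_1,g_2\in p_G(Y)$, I lift a witnessing tolled $G$-walk with constant middle coordinate to place $(g,h^*)$ in $T_{G\circ H}(x,y)\subseteq Y$, so $g\in p_G(Y)$ and $p_G(Y)$ is t-convex. For (b), if $g$ is a non-extreme vertex of $p_G(Y)$ then, since $p_G(Y)$ is already t-convex by (a), the vertex $g$ must lie on a tolled walk between two \emph{other} vertices of $p_G(Y)$; lifting this walk while letting the $H$-coordinate at the position of $g$ range over all of $V(H)$ shows $(g,h)\in Y$ for every $h$, i.e. ${}^g\!H\subseteq Y$.

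For the converse I would assume (a)–(c) and verify t-convexity through Lemma~\ref{l:abg}, which (since adjacent pairs give trivial intervals) reduces to showing: for non-adjacent $x=(g_1,h_1),y=(g_2,h_2)\in Y$ and $v=(p,q)\notin Y$, one of $N[x]-\{v\}$, $N[y]-\{v\}$ separates $v$ from the opposite endpoint. As $H$ is complete, non-adjacency of $x,y$ forces $g_1\ne g_2$ and $g_1g_2\notin E(G)$. Because $v\notin Y$, condition (b) forces $p$ to be outside $p_G(Y)$ or an extreme vertex of it; in either case, if $p\notin\{g_1,g_2\}$ one gets $p\notin T_G(g_1,g_2)$, so Lemma~\ref{l:abg} in $G$ yields that some $N_G[g_i]-\{p\}$ separates $p$ from $g_j$. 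A short projection argument lifts this (any $v$–$y$ path in $G\circ H$ avoiding $(N_G[g_i]\times V(H))-\{v\}$ projects to a $p$–$g_j$ walk avoiding the separator $N_G[g_i]-\{p\}$, a contradiction), using $N[(g,h)]=N_G[g]\times V(H)$. If instead $p\in\{g_1,g_2\}$, say $p=g_1$, then $v$ lies in the same complete layer as $x$, so $N[x]=N[v]$ and deleting $N[x]-\{v\}$ isolates $v$ while leaving $y$ (whose $G$-coordinate avoids $N_G[g_1]$) untouched, again separating them. The only genuinely delicate point I expect is the necessity of (c); once the neighbourhood formula, the collapse lemma, and the walk-lifting principle are in place, the remaining steps are routine.
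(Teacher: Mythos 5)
There is no in-paper proof to compare you against: Theorem~\ref{lexOld} is quoted, with citation, from \cite[Theorem 5.1]{abg}, and the paper never reproves it (it only reproves the Cartesian analogue, Theorem~\ref{th:convex}, because there the cited characterization was faulty). So your argument must stand on its own, and after checking both directions I find it correct. Moreover, although you could not see it, your toolkit closely mirrors machinery this paper builds for its own lexicographic results: your ``lifting principle'' and your ``collapse lemma'' are exactly the content of Lemma~\ref{interval} and Remark~\ref{sloj}, and your spreading-across-layers argument is the one used inside Lemma~\ref{univerzalno}; your converse runs on the same separation criterion (Lemma~\ref{l:abg}, Proposition~\ref{p:separate}) that the paper uses for the Cartesian case.

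Two points you should make explicit, neither of which is a fatal gap. First, your justification of the lifting principle --- that endpoint-to-interior adjacency ``depends only on the $G$-coordinate'' --- is false in a lexicographic product in general (two vertices sharing a $G$-coordinate can be adjacent through $H$); it is true here only because a tolled walk between non-adjacent $u,v$ in $G$ never revisits $u$ or $v$: an interior occurrence of $u$ would force one of its walk-neighbours to lie in $N(u)$ at an index other than the first, violating the tolled-walk condition. State this; the same fact is also silently used when you let the $H$-coordinate vary at the position of $g$ in your proof of (b). Second, your converse can be shortened: once $H$ is complete and you know the interval formula $T_{G\circ H}(x,y)=\bigl((T_G(g_1,g_2)-\{g_1,g_2\})\times V(H)\bigr)\cup\{x,y\}$, every $p\in T_G(g_1,g_2)-\{g_1,g_2\}$ lies in $p_G(Y)$ by (a) and is automatically a \emph{non-extreme} vertex of $p_G(Y)$ (it sits in a toll interval between two other vertices of that t-convex set), so (b) gives ${}^{p}\!H\subseteq Y$ at once; this replaces your case analysis with Lemma~\ref{l:abg}, the projection argument, and the $N[x]=N[v]$ isolation trick, though what you wrote is also sound.
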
 

\begin{corollary}
Let $G$ and $H$ be non-trivial connected graphs where $H$ is not complete. Then $th(G \circ H)=2.$
\end{corollary}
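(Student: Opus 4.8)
The plan is to combine the trivial lower bound $th(G\circ H)\ge 2$ with an explicit two-element t-hull set, whose sufficiency will drop out of Theorem~\ref{lexOld} almost for free. The whole point is that hypothesis (c) of Theorem~\ref{lexOld} fails once $H$ is not complete, so \emph{every} proper t-convex subset of $V(G\circ H)$ that does not induce a complete graph is forbidden. Equivalently, every proper t-convex subset must induce a complete graph; a t-convex set that contains a non-edge is therefore forced to be all of $V(G\circ H)$.

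For the lower bound I would first note that $G$ and $H$ are connected and non-trivial, so $G\circ H$ is connected and has at least four vertices, and Theorem~\ref{lexOld} applies. Any singleton $\{w\}$ is t-convex, hence $[\{w\}]_t=\{w\}\ne V(G\circ H)$, so no single vertex is a t-hull set and $th(G\circ H)\ge 2$.

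For the upper bound I would produce two non-adjacent vertices. Since $H$ is not complete there are non-adjacent $h_1,h_2\in V(H)$; fix any $g\in V(G)$ and consider $(g,h_1)$ and $(g,h_2)$. By the definition of the lexicographic product these two vertices are non-adjacent in $G\circ H$ (adjacency with equal first coordinate would require $h_1h_2\in E(H)$). Now set $S=\{(g,h_1),(g,h_2)\}$ and consider its t-convex hull $[S]_t$. This set is t-convex by definition and contains the non-edge $(g,h_1)(g,h_2)$, so the subgraph it induces is not complete. If $[S]_t$ were a proper subset of $V(G\circ H)$, then Theorem~\ref{lexOld} would force condition (c), namely that $H$ is complete, contradicting the hypothesis. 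Hence $[S]_t=V(G\circ H)$, so $S$ is a t-hull set of size $2$ and $th(G\circ H)\le 2$. Together with the lower bound this gives $th(G\circ H)=2$.

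I do not expect a genuine obstacle here, since the heavy lifting is already done by Theorem~\ref{lexOld}; the only thing to keep straight is the logical direction of that characterization. The statement classifies the proper t-convex sets that are \emph{not} complete, so the correct reading is that, under $H$ non-complete, such sets simply do not exist. The single subtlety worth stating explicitly is that $[S]_t$ really does fail to induce a complete graph (because it retains the original non-edge), which is what lets us invoke the theorem and conclude the hull exhausts $V(G\circ H)$.
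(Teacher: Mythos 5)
Your proposal is correct and takes essentially the same approach as the paper: both invoke Theorem~\ref{lexOld} to conclude that, since $H$ is not complete, any t-convex set containing two non-adjacent vertices of $G\circ H$ must be all of $V(G\circ H)$, whence two such vertices form a t-hull set. You merely spell out the details (the explicit choice of non-adjacent vertices in one $H$-layer and the trivial lower bound) that the paper leaves implicit.
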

\begin{proof}
Since $H$ is not complete Theorem~\ref{lexOld} implies, that the only t-convex set containing two different non adjacent vertices from $V(G \circ H)$ is the whole graph. Thus $th(G\circ H)=2.$ \qed
\end{proof}

Before we continue with the investigation of the toll number and t-hull number of the lexicographic product we give a formula for the toll interval between two vertices in the lexicographic product of two graphs.

\begin{lemma}\label{interval}
Let $G$ and $H$ be two non-trivial graphs. Then $T_{G \circ H}((g,h),(g',h'))=((T_G(g,g')-\{g,g'\})\times V(H) ) \cup \{(g,h),(g',h')\}$ for every $g\neq g'.$
\end{lemma}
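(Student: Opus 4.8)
The plan is to split on whether $g$ and $g'$ are adjacent in $G$. If $gg'\in E(G)$, then $(g,h)$ and $(g',h')$ are adjacent in $G\circ H$, so the left-hand side is $\{(g,h),(g',h')\}$; on the right-hand side $T_G(g,g')=\{g,g'\}$, whence $(T_G(g,g')-\{g,g'\})\times V(H)=\emptyset$ and both sides agree. Hence I may assume $gg'\notin E(G)$, so that $(g,h)$ and $(g',h')$ are non-adjacent in $G\circ H$ and Lemma~\ref{l:abg} applies in both $G$ and $G\circ H$.

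The single structural fact driving everything is that for any $h''\in V(H)$ one has $N_G(g)\times V(H)\subseteq N_{G\circ H}[(g,h'')]$, while $N_{G\circ H}[(g,h'')]$ meets the column $\{g\}\times V(H)$ only in $\{(g,h'')\}\cup(\{g\}\times N_H(h''))$. Consequently any path of $G\circ H$ that starts outside the $g$-column and reaches it, or that leaves the $g$-column, must use a vertex of $N_G(g)\times V(H)\subseteq N_{G\circ H}[(g,h)]$. I would record this observation first and then prove the two inclusions, each time converting the separation condition of Lemma~\ref{l:abg} in one graph into the corresponding condition in the other by lifting or projecting paths.

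For the inclusion $\supseteq$, the endpoints are trivially in the toll interval, so fix $x\in T_G(g,g')-\{g,g'\}$ and $h''\in V(H)$. By Lemma~\ref{l:abg} applied in $G$ there is an $x,g'$-path in $G$ avoiding $N_G[g]-\{x\}$ and an $x,g$-path avoiding $N_G[g']-\{x\}$. I would lift the first to the path in $G\circ H$ obtained by giving every inner vertex the second coordinate $h''$ and ending at $(g',h')$, and symmetrically for the second; the structural fact shows these lifts avoid $N_{G\circ H}[(g,h)]-\{(x,h'')\}$ and $N_{G\circ H}[(g',h')]-\{(x,h'')\}$ respectively (here one uses $x\notin N_G[g]-\{x\}$, so even if $x\in N_G(g)$ no problem arises, together with $g'\notin N_G[g]$ and $g\notin N_G[g']$, both valid since $gg'\notin E(G)$). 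Lemma~\ref{l:abg}, now applied in $G\circ H$, then yields $(x,h'')\in T_{G\circ H}((g,h),(g',h'))$.

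For the inclusion $\subseteq$, which I expect to be the main obstacle, take $(p,q)\in T_{G\circ H}((g,h),(g',h'))$ different from the two endpoints. I would first show $p\notin\{g,g'\}$: if $p=g$ then $q\neq h$, and since every $(g,q),(g',h')$-path must leave the $g$-column through a vertex of $N_G(g)\times V(H)\subseteq N_{G\circ H}[(g,h)]$ distinct from $(g,q)$, the set $N_{G\circ H}[(g,h)]-\{(g,q)\}$ separates $(g,q)$ from $(g',h')$, contradicting Lemma~\ref{l:abg}; the case $p=g'$ is symmetric. Knowing $p\neq g,g'$, I would take a $(p,q),(g',h')$-path in $G\circ H$ avoiding $N_{G\circ H}[(g,h)]-\{(p,q)\}$ (guaranteed by Lemma~\ref{l:abg}) and project it to $G$. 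The structural fact forces this path to avoid the whole $g$-column except possibly its start, so its projection avoids $g$; moreover any vertex projecting into $N_G(g)$ lies in $N_{G\circ H}[(g,h)]$ and hence must be the start $(p,q)$, so the projection also avoids $N_G(g)-\{p\}$. Thus the projected $p,g'$-walk avoids $N_G[g]-\{p\}$, and symmetrically a projected $p,g$-walk avoids $N_G[g']-\{p\}$. Lemma~\ref{l:abg} in $G$ then gives $p\in T_G(g,g')$, and with $p\neq g,g'$ this places $(p,q)$ in $(T_G(g,g')-\{g,g'\})\times V(H)$, completing the argument.
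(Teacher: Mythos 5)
Your proposal is correct, and both proofs ultimately hinge on the same case split (adjacent versus non-adjacent $g,g'$) and on Lemma~\ref{l:abg}, but you deploy that lemma differently in each inclusion. For $\supseteq$, the paper does not use Lemma~\ref{l:abg} at all: it takes a tolled $g,g'$-walk $W$ through $x$ in $G$ and lifts it directly to a tolled walk in $G\circ H$ by giving the vertices before $x$ the second coordinate $h$, the vertex $x$ the coordinate $y$, and the vertices after $x$ the coordinate $h'$ --- an explicit mixed-coordinate construction. You instead extract the two certificate paths in $G$ from the separation characterization, lift them with a constant second coordinate, and invoke Lemma~\ref{l:abg} again in the product; this is a genuine double application of the lemma where the paper needs none. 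For $\subseteq$, both arguments use Lemma~\ref{l:abg}, but in opposite directions: the paper lifts separators upward (asserting, without verification, that if $N_G[g]-\{x\}$ separates $x$ from $g'$ in $G$ then $N_{G\circ H}[(g,h)]-\{(x,y)\}$ separates $(x,y)$ from $(g',h')$ in $G\circ H$), whereas you argue contrapositively, projecting a path of the product down to $G$ and checking, via your column observation, that the projection avoids $N_G[g]-\{p\}$. These are logically equivalent, but your version supplies precisely the verification the paper leaves implicit, including the justification that the $g$-column cannot be entered and re-exited, which is also what the paper's unproved claim that $N[(g,h)]-\{(g,y)\}$ separates $(g,y)$ from $(g',h')$ rests on. In short: your proof is more uniform and more self-contained; the paper's is shorter and, in the easy direction, constructive. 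One cosmetic point: the projection of a path in $G\circ H$ is a walk in $G$ (consecutive vertices may coincide), so one should suppress repetitions before speaking of a path avoiding $N_G[g]-\{p\}$; since non-separation only requires some path, this is harmless, and your wording (``projected walk'') already acknowledges it.
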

\begin{proof}
First, if $gg'\in E(G)$ then $T_{G \circ H}((g,h),(g',h'))=\{(g,h),(g',h')\}$ and the proof is complete. 

Thus let $g,g'$ be two non-adjacent vertices of $G$. Let $y \in V(H)$, $x \in T_G(g,g')-\{g,g'\}$ and let $W=g,g_1,\ldots , g_k,g'$ be a tolled $g,g'$-walk that contains $x=g_i$ for some $i \in \lbrace 1,2,\ldots,k\rbrace$. Then $$(g,h),(g_1,h),\ldots (g_{i-1},h),(x,y),(g_{i+1},h'),\ldots, (g_k,h'),(g',h')$$ is a tolled $(g,h),(g',h')$-walk that contains $(x,y).$  

Let $y \in V(H)-\{h\}.$ Then $N[(g,h)]-\{(g,y)\}$ separate $(g,y)$ from $(g',h')$. Thus using Lemma~\ref{l:abg} we get $(g,y) \notin T_{G \circ H}((g,h),(g',h'))$. In an analogue way we prove that $(g',y) \notin T_{G \circ H}((g,h),(g',h'))$ for any $y \in V(H)-\{h'\}.$

Finally let $x \notin T_G(g,g')$. Then, using Lemma~\ref{l:abg}, we may without loss of generality assume that $N[g]-\{x\}$ separate $x$ from $g'$. Therefore $N[(g,h)]-\{(x,y)\}$ separate $(x,y)$ from $(g',h')$ and consequently $(x,y) \notin T_{G \circ H}((g,h),(g',h')).$   \qed

\end{proof}

If $g=g'$ then we get the following obvious remark.

\begin{remark}\label{sloj}
Let $G$ and $H$ be two non-trivial graphs. Then 
\[
    T_{G \circ H}((g,h),(g,h'))= 
\begin{cases}
    (N_G(g)\times V(H)) \cup \{(g,x);~x \in T_H(h,h')\},& \text{if } h' \notin N_H[h];\\
    \{g\} \times T_H(h,h'),              & \text{if } h' \in N_H[h].
\end{cases}
\]
\end{remark}

\begin{proposition}\label{p:hull}
Let $G$ be a non-trivial graph. Then $n \cdot |Ext(G)| \leq th(G \circ K_n) \leq n\cdot th(G).$
\end{proposition}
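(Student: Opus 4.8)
The plan is to prove the two inequalities separately. For the lower bound $n \cdot |Ext(G)| \le th(G \circ K_n)$, I would invoke the general inequality $|Ext(G \circ K_n)| \le th(G \circ K_n)$ recorded in the preliminaries, and reduce the claim to the identity $|Ext(G \circ K_n)| = n \cdot |Ext(G)|$. To establish this identity I would show that $(g,h) \in Ext(G \circ K_n)$ if and only if $g \in Ext(G)$, which immediately yields $Ext(G \circ K_n) = Ext(G) \times V(K_n)$.

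For this characterization, recall that a vertex $s$ is extreme exactly when it lies in no toll interval $T(u,v)$ with $u,v$ both distinct from $s$, i.e.\ when it is not an interior vertex of any toll interval. Using Lemma~\ref{interval} and Remark~\ref{sloj} (with $H = K_n$ complete, so that $h' \in N_{K_n}[h]$ always and hence every same-fiber interval $T_{G\circ K_n}((g,h),(g,h'))$ reduces to its two endpoints), the only intervals with nonempty interior are those between $(x,y)$ and $(x',y')$ with $x \ne x'$; their interior is $(T_G(x,x') - \{x,x'\}) \times V(K_n)$. Thus $(g,h)$ is interior to some product interval if and only if $g \in T_G(x,x') - \{x,x'\}$ for some $x \ne x'$, i.e.\ if and only if $g \notin Ext(G)$. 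This gives both directions of the equivalence and hence $|Ext(G \circ K_n)| = n \cdot |Ext(G)|$.

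For the upper bound I would take a minimum t-hull set $S$ of $G$, so $|S| = th(G) \ge 2$ (since $G$ is non-trivial, a single vertex cannot be a t-hull set), and prove that $S \times V(K_n)$ is a t-hull set of $G \circ K_n$; as $|S \times V(K_n)| = n \cdot th(G)$ this gives the bound. The key computation is that for every $A \subseteq V(G)$ with $|A| \ge 2$ one has $T_{G \circ K_n}(A \times V(K_n)) = T_G(A) \times V(K_n)$, where $T_G(A) = \bigcup_{a,a' \in A} T_G(a,a')$. This follows from Lemma~\ref{interval} together with the fact (from $K_n$ complete) that same-fiber intervals contribute nothing new: pairs with $a \ne a'$ contribute $(T_G(a,a') - \{a,a'\}) \times V(K_n)$ together with their two endpoints, and the union over all such pairs is exactly $T_G(A) \times V(K_n)$ because every endpoint already lies in $A \subseteq T_G(A)$. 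Iterating this identity gives $T^k_{G \circ K_n}(S \times V(K_n)) = T^k_G(S) \times V(K_n)$ for all $k$; since $[S]_t = \bigcup_k T^k_G(S) = V(G)$ and $G$ is finite, some $T^{k_0}_G(S) = V(G)$, whence $T^{k_0}_{G \circ K_n}(S \times V(K_n)) = V(G \circ K_n)$ and $S \times V(K_n)$ is indeed a t-hull set.

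The main obstacle I anticipate is the bookkeeping in the layer identity $T_{G \circ K_n}(A \times V(K_n)) = T_G(A) \times V(K_n)$: one must verify carefully that same-fiber pairs and the endpoint vertices produce nothing outside $T_G(A) \times V(K_n)$, that the hypothesis $|A| \ge 2$ is genuinely needed (and is guaranteed by non-triviality, since $th(G) \ge 2$), and that the set remains a full product layer at each iteration so that the identity can be reapplied. The extreme-vertex computation underlying the lower bound is conceptually simpler but relies on the same reduction of same-fiber intervals afforded by the completeness of $K_n$.
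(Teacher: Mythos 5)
Your proof is correct and takes essentially the same approach as the paper: the upper bound is obtained from the set $S \times V(K_n)$ for a minimum t-hull set $S$ of $G$ by iterating Lemma~\ref{interval}, and the lower bound comes from showing $Ext(G)\times V(K_n) \subseteq Ext(G \circ K_n)$ and invoking $|Ext(G\circ K_n)| \leq th(G\circ K_n)$. You merely prove slightly more than the paper does at each step (the exact layer identity $T_{G\circ K_n}(A\times V(K_n)) = T_G(A)\times V(K_n)$ instead of one inclusion, and equality $Ext(G\circ K_n)=Ext(G)\times V(K_n)$ instead of one inclusion), which is harmless.
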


\begin{proof}
Let $D$ be a minimum t-hull set of $G$. Then $S=\{(g,h);~g \in D, h \in V(K_n)\}$ is a t-hull set of $G \circ K_n.$ Indeed, if $x \in V(G)-D$ then $x \in T^k(D)$ for some $k \in \mathbb N$ and consequently using Lemma~\ref{interval}, $(x,y) \in T^k(D \times V(K_n))$ for every $y \in V(K_n).$ Thus $th(G \circ K_n) \leq n \cdot th(G).$

For the second inequality, let $D$ be the set of extreme vertices of $G$. We will prove that the vertices in $D \times V(K_n)$ are extreme. Let $(g,h) \in D \times V(K_n)$. We will prove that $(G\circ K_n)-\{(g,h)\}$ is t-convex, i.e.~ $(g,h) \notin T((x,y),(x',y'))$ for any $(x,y),(x',y') \in V(G \circ K_n)$ different from $(g,h)$. Suppose first that $g=x=x'$. Then $(g,h),(x,y),(x',y')$ induces a clique in $G \circ K_n$ and thus $(g,h) \notin T((x,y),(x',y'))$. If exactly one vertex from $\{x,x'\}$ is equal to $g$, then Lemma~\ref{interval} implies that $(g,h) \notin T((x,y),(x',y'))$. Finally suppose that $g \notin \lbrace x,x'\rbrace.$ For the purpose of contradiction assume that $(g,h) \in T((x,y),(x',y'))$, then it follows from Lemma~\ref{interval}, that $g \in T_G(x,x')$, which contradicts the fact that $g$ is an extreme vertex of $G$. Therefore $D \times V(K_n) \subseteq Ext(G\circ K_n)$ which implies, $n \cdot |Ext(G)| \leq |Ext(G \circ K_n)| \leq th(G \circ K_n).$  \qed 
\end{proof}

In the rest of the section we will focus on the toll number of the lexicographic product of graphs. First we will consider the lexicographic product of $G$ and a complete graph.

\begin{proposition}
Let $G$ be a non-trivial graph. Then $n \cdot |Ext(G)| \leq tn(G \circ K_n) \leq n\cdot tn(G).$
\end{proposition}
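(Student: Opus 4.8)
The plan is to prove the two inequalities separately, reusing the toll-interval formula of Lemma~\ref{interval} for the upper bound and the extreme-vertex computation already carried out in the proof of Proposition~\ref{p:hull} for the lower bound. Both directions are mild analogues of the corresponding $t$-hull statements, but the toll-number upper bound is in fact cleaner, because a toll set realizes the whole vertex set in a single application of the interval operator, so no iteration of $T$ is needed here.

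For the upper bound $tn(G\circ K_n)\le n\cdot tn(G)$, I would fix a minimum toll set $D$ of $G$ and show that $S=D\times V(K_n)$ is a toll set of $G\circ K_n$; since $|S|=n\cdot|D|=n\cdot tn(G)$, this yields the bound. To see that $S$ is a toll set, take an arbitrary $(x,y)\in V(G\circ K_n)$. If $x\in D$ then $(x,y)\in S$ and hence lies in the toll closure trivially. If $x\notin D$, then because $T_G[D]=V(G)$ there are $g,g'\in D$ with $x\in T_G(g,g')$; as $x\neq g,g'$ we in fact have $x\in T_G(g,g')-\{g,g'\}$, and moreover $g\neq g'$, since otherwise $T_G(g,g)=\{g\}$ would force $x=g\in D$. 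Applying the toll-interval formula of Lemma~\ref{interval} to the pair $(g,y),(g',y')$ with any $y'\in V(K_n)$ then gives $(x,y)\in (T_G(g,g')-\{g,g'\})\times V(K_n)\subseteq T_{G\circ K_n}((g,y),(g',y'))$, and both endpoints lie in $S$. Hence $T_{G\circ K_n}[S]=V(G\circ K_n)$.

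For the lower bound $n\cdot|Ext(G)|\le tn(G\circ K_n)$, I would invoke the fact, established inside the proof of Proposition~\ref{p:hull}, that $Ext(G)\times V(K_n)\subseteq Ext(G\circ K_n)$. Combining this with the general inequality $|Ext(G\circ K_n)|\le tn(G\circ K_n)$ (the set of extreme vertices is contained in every toll set) gives $n\cdot|Ext(G)|=|Ext(G)\times V(K_n)|\le |Ext(G\circ K_n)|\le tn(G\circ K_n)$.

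I do not anticipate a genuine obstacle. The only points requiring care are the bookkeeping in the upper bound, namely separating the cases $x\in D$ and $x\notin D$ and verifying $g\neq g'$ so that Lemma~\ref{interval} applies in the form that lifts an entire $G$-interval to all of $K_n$, and citing the correct containment of extreme vertices from Proposition~\ref{p:hull} rather than re-deriving it.
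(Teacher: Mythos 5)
Your proposal is correct and follows essentially the same route as the paper: the upper bound lifts a minimum toll set $D$ of $G$ to $D\times V(K_n)$ and applies Lemma~\ref{interval} exactly as the paper does, while for the lower bound you cite the containment $Ext(G)\times V(K_n)\subseteq Ext(G\circ K_n)$ from inside the proof of Proposition~\ref{p:hull} together with $|Ext(G\circ K_n)|\le tn(G\circ K_n)$, whereas the paper equivalently chains through the proposition's statement via $n\cdot|Ext(G)|\le th(G\circ K_n)\le tn(G\circ K_n)$ --- the same underlying argument.
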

\begin{proof}
Let $D$ be a minimum toll set of $G$. Then $S=\{(g,h);~g \in D, h \in V(K_n)\}$ is a toll set of $G \circ K_n.$ Indeed, if $x \in V(G)-D$ then there exist $g, g' \in D$ such that $x \in T_G(g,g')$. Then it follows from Lemma~\ref{interval} that $(x,y) \in T_{G \circ K_n}((g,y),(g',y))$ for every $y \in V(K_n).$ Thus $tn(G \circ K_n) \leq n\cdot tn(G).$

The second inequality follows directly from Proposition~\ref{p:hull}, as $n \cdot |Ext(G)| \leq th(G \circ K_n) \leq tn(G \circ K_n).$ \qed 
\end{proof}

\begin{corollary}
Let $G$ be a non-trivial extreme complete graph. Then $tn(G \circ K_n)=n \cdot tn(G).$
\end{corollary}

On the other hand there is an infinite family of graphs where the upper bound is not sharp. For example, $tn(C_m)=2$ for every $m >3.$ But any four vertices of one $C_m$-layer induce a toll set of $C_m \circ K_n$. Even more, if $m \geq 6$ any three pairwise non-adjacent vertices of one $C_m$-layer induce a toll set of $C_m \circ K_n$. Thus, $tn(C_m \circ K_n) \leq 4, $ and for $m \geq 6$, $tn(C_m \circ K_n) \leq 3$ which is for $n >> 4$, much less than $n \cdot tn(C_m)=2n.$ 

\begin{theorem}\label{meja3}
Let $G$ and $H$ be arbitrary connected non-trivial graphs where $H$ is not a complete graph. Then $tn(G \circ H) \leq 3 \cdot tn(G)$.
\end{theorem}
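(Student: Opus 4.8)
The plan is to exhibit an explicit toll set of size at most $3\cdot tn(G)$. Fix a minimum toll set $D=\{g_1,\dots,g_t\}$ of $G$, so $t=tn(G)$. Since $H$ is connected and non-complete, I would first fix two non-adjacent vertices $h_1,h_2\in V(H)$ (so $h_1h_2\notin E(H)$) together with a third vertex $h_3\in V(H)$, and take as candidate set
$$S=D\times\{h_1,h_2,h_3\},$$
so that $|S|\le 3t$. The whole task is then to verify that $T_{G\circ H}[S]=V(G\circ H)$, which I would organize by splitting $V(G\circ H)$ into the fibers ${}^{g}\!H$ with $g\notin D$ and those with $g\in D$.

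The fibers over $V(G)\setminus D$ are the easy part. For any $x\in V(G)\setminus D$, since $D$ is a toll set of $G$ we have $x\in T_G(g_i,g_j)$ for some $i\neq j$, and $x\neq g_i,g_j$; hence $x\in T_G(g_i,g_j)-\{g_i,g_j\}$. Lemma~\ref{interval} then gives $\{x\}\times V(H)\subseteq T_{G\circ H}\big((g_i,h_1),(g_j,h_1)\big)$, so the \emph{entire} fiber ${}^{x}\!H$ already lies in $T^1[S]$. Thus after one application of the interval operator every fiber over a non-$D$ vertex is fully absorbed.

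The heart of the argument is covering the fibers ${}^{g}\!H$ with $g\in D$, and this is where the three chosen vertices of $H$ (rather than two) are used. For such a $g$ I would apply Remark~\ref{sloj} to the non-adjacent pair $(g,h_1),(g,h_2)$: since $h_1h_2\notin E(H)$ this yields $N_G(g)\times V(H)$, i.e.\ all fibers over neighbours of $g$ become fully covered, together with the within-fiber part $\{g\}\times T_H(h_1,h_2)$. Having the neighbouring fibers in full, I would then recover the rest of ${}^{g}\!H$ by a cross-fiber interval: if $g$ lies in the interior of some $T_G(x,y)$ with $x,y$ already covered, Lemma~\ref{interval} returns the whole fiber $\{g\}\times V(H)$. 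Connectivity of $G$, the availability of the full adjacent fibers $N_G(g)\times V(H)$, and the tolled walk $x,g,y$ through two non-adjacent neighbours of $g$ are the tools I would invoke to produce such anchors, iterating the toll closure if a covered anchor is itself a vertex of $D$.

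The step I expect to be the main obstacle is precisely this last one: guaranteeing that each fiber over a vertex of $D$ gets completed. The delicate case is a vertex $g\in D$ that is \emph{not} interior to any toll interval of $G$ — in particular an extreme vertex, around which cross-fiber intervals cannot fill ${}^{g}\!H$ and one is forced to rely on the within-fiber structure, which mirrors $T_H$ and therefore uses the third coordinate $h_3$ (and the non-adjacent pair) in an essential way. Controlling the interaction between the $T_H$-behaviour inside such a fiber and the three prescribed levels $h_1,h_2,h_3$, and checking that the simplicial/extreme fibers are still exhausted, is the crux that must be handled with care; the non-complete-ness of $H$ is exactly what is needed to start the mechanism via the non-adjacent pair $h_1,h_2$.
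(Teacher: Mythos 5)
Your construction is genuinely different from the paper's, and it has a real gap at exactly the point you flag. Placing all three vertices in the same fiber, $S=D\times\{h_1,h_2,h_3\}$, cannot work in general: if $g\in D$ is an extreme vertex of $G$ (e.g.\ a leaf of a tree), then $g\notin T_G(x,y)-\{x,y\}$ for all $x,y\neq g$, so by Lemma~\ref{interval} no cross-fiber interval meets ${}^{g}\!H$ in anything beyond the endpoint $(g,h_i)$ itself, and by Remark~\ref{sloj} the within-fiber pairs cover only $\{g\}\times\bigl(T_H(h_1,h_2)\cup T_H(h_1,h_3)\cup T_H(h_2,h_3)\bigr)$, which is all of ${}^{g}\!H$ only if $\{h_1,h_2,h_3\}$ is a toll set of $H$ --- impossible whenever $tn(H)>3$. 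Concretely, take $G=P_4$ with endpoints $g_1,g_4$ (so $D=\{g_1,g_4\}$, both extreme) and $H=K_{1,4}$ (so $tn(H)=4$): whichever three vertices of $H$ you pick, some leaf $l$ of the star gives a vertex $(g_1,l)$ lying in no toll interval between two vertices of your $S$, so $S$ is not a toll set. Moreover, your fallback of ``iterating the toll closure'' with already-covered vertices as new anchors is not available here: a toll set must satisfy $T_{G\circ H}[S]=V(G\circ H)$ with intervals taken only between pairs of $S$ itself; iterating the operator defines the t-hull number, so that argument would only yield $th(G\circ H)\leq 3\cdot tn(G)$, not the claimed bound on $tn(G\circ H)$.

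The paper's placement avoids both problems: for each $g\in D$ it takes one arbitrary vertex of ${}^{g}\!H$ together with \emph{two vertices in a neighbouring fiber}, namely $(g',h),(g',h')$ where $g'\in N_G(g)$ and $hh'\notin E(H)$. By Remark~\ref{sloj}, $T_{G\circ H}\bigl((g',h),(g',h')\bigr)\supseteq N_G(g')\times V(H)\supseteq {}^{g}\!H$, so every $D$-fiber --- including those over extreme vertices --- is absorbed in a single application of the interval operator, with no iteration and no assumption on $tn(H)$. Your treatment of the fibers over $V(G)\setminus D$ is correct and coincides with the paper's; it is only the $D$-fibers that force the extra vertices to sit in a neighbouring layer rather than in the layer of $g$ itself.
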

\begin{proof}
Let $D$ be a minimum toll set of $G$. We will construct a toll set $S$ in $G \circ H$ of size $3|D|.$ For every vertex $g \in D$ we put in $S$ an arbitrary vertex from $^g\!H.$ Then the toll closure of $S$ contains all vertices of $G \circ H$ except some vertices in $^g\!H$ for $g \in D$. But those vertices can be covered with the intervals between the vertices in the neighboring layers. Thus, for every $g \in D$, we add to $S$ $(g',h),(g',h')$, where $g'$ is a neighbour of $g$ in $G$ and $h,h'$ are arbitrary different, non-adjacent vertices of $H$. \qed
\end{proof}

The bound from Theorem~\ref{meja3} is sharp. We will show that later. First, examples of families of graphs, for which the bound in Theorem~\ref{meja3} is not achieved, appear in the following lemmas and theorem.

\begin{lemma}\label{univerzalno}
Let $G$ and $H$ be arbitrary non-trivial graphs, where $G$ has an universal vertex and $H$ is not a complete graph. Then $tn(G \circ H) \leq 4.$
\end{lemma}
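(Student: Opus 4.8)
The plan is to exploit the fact that a universal vertex $w$ of $G$ sits at toll distance essentially $1$ from everything, so that a single within-layer interval in ${}^{w}\!H$ already sweeps all the other layers. Concretely, since $H$ is not complete, I would first fix two distinct non-adjacent vertices $h_1,h_2\in V(H)$ and put $(w,h_1),(w,h_2)$ into the toll set. Because $w$ is universal we have $N_G(w)=V(G)\setminus\{w\}$, so Remark~\ref{sloj} (applied with $h_2\notin N_H[h_1]$) gives $T_{G\circ H}((w,h_1),(w,h_2))=\big((V(G)\setminus\{w\})\times V(H)\big)\cup\big(\{w\}\times T_H(h_1,h_2)\big)$. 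Thus these two vertices already cover every layer of $G\circ H$ except the universal layer ${}^{w}\!H$, of which only the part $\{w\}\times T_H(h_1,h_2)$ is taken care of.

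The second and main step is to cover the rest of ${}^{w}\!H$ with two further vertices. The key observation I would use is that for any two vertices $g_1,g_2$ that are non-adjacent in $G$, the sequence $g_1,w,g_2$ is a tolled walk (its unique interior vertex is $w$, which is adjacent to both endpoints), so $w\in T_G(g_1,g_2)\setminus\{g_1,g_2\}$. By the interval formula (Lemma~\ref{interval}), $T_{G\circ H}((g_1,a),(g_2,b))\supseteq\{w\}\times V(H)={}^{w}\!H$ for arbitrary $a,b\in V(H)$. Hence, when $G$ is not complete, I would choose a non-adjacent pair $g_1,g_2\in V(G)$ and add $(g_1,a),(g_2,b)$; the resulting four vertices form a toll set, proving $tn(G\circ H)\le 4$.

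It remains to treat the case where $G$ itself is complete, so that no non-adjacent pair exists and the cross-layer trick above is unavailable. Here every vertex of $G$ is universal, so I would pick a second universal vertex $w'\neq w$ and apply Remark~\ref{sloj} again to $(w',h_1),(w',h_2)$; its interval contains $(V(G)\setminus\{w'\})\times V(H)$, which in particular is the whole layer ${}^{w}\!H$. Then $S=\{(w,h_1),(w,h_2),(w',h_1),(w',h_2)\}$ is a toll set of size $4$, since the two pairs jointly cover $\big((V(G)\setminus\{w\})\cup(V(G)\setminus\{w'\})\big)\times V(H)=V(G)\times V(H)$.

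The hard part is precisely the universal layer ${}^{w}\!H$: the obvious pair sitting inside it only recovers $\{w\}\times T_H(h_1,h_2)$, which is a proper subset because $H$ is not complete, so the layer cannot be cleaned up from within without risking a large toll set. The crucial insight that unlocks the bound is that the universal vertex belongs to the toll interval of every $G$-non-adjacent pair, letting a single cross-layer interval sweep all of ${}^{w}\!H$ at once; the complete case is the lone exception, handled by borrowing a second universal vertex.
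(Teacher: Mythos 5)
Your proof is correct, but it is more complicated than it needs to be, and the comparison with the paper is instructive. The paper's toll set is $\{(g,h_1),(g,h_2),(g',h_1),(g',h_2)\}$, where $g$ is the universal vertex and $g'$ is \emph{any} neighbour of $g$: by Remark~\ref{sloj} the interval of the second pair is $(N_G(g')\times V(H))\cup\{(g',x):x\in T_H(h_1,h_2)\}$, and since $g\in N_G(g')$ this contains all of $^{g}\!H$. This is precisely the argument you give in your ``complete $G$'' case, but notice that it nowhere uses completeness of $G$; it only uses $w\in N_G(w')$, which holds for every $w'\neq w$ because $w$ is universal. So your case split is unnecessary: your Case B argument already proves the lemma in full generality. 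Your Case A is nevertheless valid and rests on a genuinely different mechanism: for non-adjacent $g_1,g_2$ the sequence $g_1,w,g_2$ is a tolled walk, so $w\in T_G(g_1,g_2)-\{g_1,g_2\}$, and Lemma~\ref{interval} then puts the whole layer $\{w\}\times V(H)$ inside the cross-layer interval $T_{G\circ H}((g_1,a),(g_2,b))$. That observation (the universal vertex lies in the toll interval of every non-adjacent pair) is a nice fact in its own right, but here it buys you nothing except the obligation to handle complete $G$ separately; both routes give the same bound of $4$.
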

\begin{proof}
In this case we will construct a toll set of size 4. Let $h_1,h_2$ be arbitrary different, non-adjacent vertices from $H$ and $g$ a universal vertex of $G$. Then $T_{G \circ H}((g,h_1),(g,h_2))$ covers all vertices of $V(G \circ H)$ with the exception of some vertices in $^g\!H.$ Since for any neighbour $g'$ of $g$, the interval $T_{G \circ H}((g',h_1),(g',h_2))$ contains all vertices from $^g\!H$, the proof is completed. \qed    
\end{proof}

\begin{lemma}\label{2neigh}
Let $G$ and $H$ be arbitrary non-trivial graphs, where $G$ has vertices $u$ and $v$ with $N(u) \cup N(v)=V(G)$ and $H$ is not a complete graph. Then $tn(G \circ H) \leq 4.$
\end{lemma}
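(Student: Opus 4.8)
The plan is to mimic the construction from Lemma~\ref{univerzalno}, but now to split the covering role between the two vertices $u$ and $v$. Since $H$ is not complete, I first fix two non-adjacent vertices $h_1,h_2\in V(H)$. I claim that the four-element set
\[
S=\{(u,h_1),(u,h_2),(v,h_1),(v,h_2)\}
\]
is a toll set of $G\circ H$. (If $u=v$ the hypothesis $N(u)\cup N(v)=V(G)$ makes $u$ universal and the statement reduces to Lemma~\ref{univerzalno}, so I may assume $u\neq v$, in which case these four vertices are pairwise distinct.)

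The key step is to apply Remark~\ref{sloj} to the two same-layer pairs. Because $h_2\notin N_H[h_1]$, the first case of the remark applies and gives
\[
T_{G\circ H}((u,h_1),(u,h_2))=(N_G(u)\times V(H))\cup\{(u,x):x\in T_H(h_1,h_2)\}\supseteq N_G(u)\times V(H),
\]
and symmetrically $T_{G\circ H}((v,h_1),(v,h_2))\supseteq N_G(v)\times V(H)$. Taking the union of these two toll intervals and invoking the hypothesis $N_G(u)\cup N_G(v)=V(G)$, I obtain
\[
T_{G\circ H}[S]\supseteq (N_G(u)\cup N_G(v))\times V(H)=V(G)\times V(H)=V(G\circ H).
\]
Hence $S$ is a toll set of size $4$, so $tn(G\circ H)\le 4$.

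There is essentially no serious obstacle here: once Remark~\ref{sloj} is in hand, the covering hypothesis does all the work, since the two same-layer intervals already flood the neighborhoods of $u$ and $v$ across every $H$-coordinate. The only points to verify carefully are that the non-adjacency of $h_1,h_2$ places us in the first case of Remark~\ref{sloj} (so that the whole slab $N_G(u)\times V(H)$ is covered, not merely the sparse set $\{u\}\times T_H(h_1,h_2)$), and that the degenerate case $u=v$ is correctly absorbed by the preceding lemma.
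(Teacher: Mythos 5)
Your proof is correct and takes essentially the same approach as the paper: the paper uses exactly the same four-vertex set $S=\{(u,h_1),(u,h_2),(v,h_1),(v,h_2)\}$, merely asserting it is a toll set, while you spell out the justification via the two same-layer intervals of Remark~\ref{sloj} and the hypothesis $N_G(u)\cup N_G(v)=V(G)$. One cosmetic point: the case $u=v$ is actually vacuous (since $u\notin N_G(u)$, the hypothesis forces $u\in N_G(v)$, so $u$ and $v$ are distinct and in fact adjacent, which is what the paper observes at the start of its proof), but your treatment of it is harmless.
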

\begin{proof}
First note that $u$ and $v$ are adjacent, as $N(u) \cup N(v)=V(G)$. Let $h_1,h_2$ be arbitrary different, non-adjacent vertices from $H$. Then $S=\{ (u,h_1),(u,h_2),(v,h_1),(v,h_2)\}$ is a toll set of $V(G \circ H)$. \qed
\end{proof}

%We have already seen that in the lexicographic product toll number is not bounded unlike in the case of Cartesian product. Anyway, there exist pairs of graphs whose lexicographic product has small toll number.

In the following theorem we will characterize pairs of graphs $(G,H)$ with $tn(G \circ H)=2.$

\begin{theorem}
Let $G$ and $H$ be arbitrary non-trivial graphs where $H$ is not isomorphic to $K_2$. Then $tn(G \circ H)=2$ if and only if $G$ has an universal vertex and $tn(H)=2.$
\end{theorem}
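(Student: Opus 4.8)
The statement is a biconditional, and both directions reduce almost entirely to the explicit descriptions of toll intervals given in Lemma~\ref{interval} (for vertices in distinct $G$-layers) and Remark~\ref{sloj} (for vertices in a common $G$-layer). The plan is to exhibit a size-two toll set in each direction and to read off the required structure directly from these formulas.

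For sufficiency, assume $G$ has a universal vertex $g$ and $tn(H)=2$. I would first argue that a minimum toll set $\{h_1,h_2\}$ of $H$ consists of two \emph{non-adjacent} vertices: if $h_1h_2\in E(H)$ then $T_H(h_1,h_2)=\{h_1,h_2\}$, so $T_H(h_1,h_2)=V(H)$ would force $H\cong K_2$, which is excluded. Hence $h_2\notin N_H[h_1]$ and the first line of Remark~\ref{sloj} applies to the pair $(g,h_1),(g,h_2)$ in the common layer $^g\!H$. Since $g$ is universal, $N_G(g)=V(G)-\{g\}$, so $N_G(g)\times V(H)$ is everything outside $^g\!H$; and since $\{h_1,h_2\}$ is a toll set of $H$, the term $\{(g,x):x\in T_H(h_1,h_2)\}$ is the whole layer $^g\!H$. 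The two terms together give $V(G\circ H)$, so $\{(g,h_1),(g,h_2)\}$ is a toll set and $tn(G\circ H)\le 2$; the reverse inequality $tn(G\circ H)\ge 2$ is immediate because $H$ is non-complete (a complete $H\ne K_2$ would have $tn(H)\ge 3$), whence $G\circ H$ is non-complete, giving $tn(G\circ H)=2$.

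For necessity, assume $tn(G\circ H)=2$ and fix a toll set $\{(g_1,h_1),(g_2,h_2)\}$; I would split into cases according to the two interval formulas. If $g_1\ne g_2$, Lemma~\ref{interval} shows the interval meets the layer $^{g_1}\!H$ only in $(g_1,h_1)$, leaving any other vertex of that layer uncovered (such a vertex exists since $|V(H)|\ge 2$), a contradiction; so $g_1=g_2=:g$ and $h_1\ne h_2$. If $h_1h_2\in E(H)$, Remark~\ref{sloj} confines the interval to $^g\!H$, contradicting $|V(G)|\ge 2$; hence $h_2\notin N_H[h_1]$ and the first line of Remark~\ref{sloj} applies. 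Requiring $(N_G(g)\times V(H))\cup(\{g\}\times T_H(h_1,h_2))=V(G\circ H)$ then forces, on one hand, every vertex of $V(G)-\{g\}$ to lie in $N_G(g)$, i.e.\ $g$ is universal, and on the other hand $T_H(h_1,h_2)=V(H)$, i.e.\ $tn(H)\le 2$; since $H$ is non-trivial, $tn(H)\ge 2$, so $tn(H)=2$.

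The argument is short because the interval formulas do the heavy lifting, so I do not expect a single hard step; the points that require care are the repeated use of the hypothesis $H\not\cong K_2$ to force the two distinguished $H$-vertices to be non-adjacent (rather than an edge, whose interval is trivial), and the sharp bookkeeping that $T_H(h_1,h_2)=V(H)$ yields exactly $tn(H)=2$ and not merely $tn(H)\le 2$, together with the trivial lower bounds $tn(\,\cdot\,)\ge 2$ for non-complete graphs that pin the values down.
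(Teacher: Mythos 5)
Your proof is correct and takes essentially the same route as the paper: both directions rest on the interval formula of Lemma~\ref{interval} (forcing the two vertices of a size-two toll set into a common $G$-layer) and on Remark~\ref{sloj} (reading off that $g$ is universal and $tn(H)=2$). The only cosmetic differences are that the paper proves sufficiency by exhibiting the tolled walks directly instead of citing Remark~\ref{sloj}, and that you spell out the use of $H\not\cong K_2$ to force the two distinguished $H$-vertices to be non-adjacent, a point the paper leaves implicit.
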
 
\begin{proof}
Suppose $g$ is an universal vertex of $G$ and $\{h_1,h_2\}$ toll set of $H$. Then $\{(g,h_1),(g,h_2)\}$ is a toll set of $G \circ H.$ Indeed, $(g,x) \in T_{G \circ H}((g,h_1),(g,h_2))$ for any $x \in V(H)$, as for any tolled $h_1,h_2$-walk $W$ containing $x$, $\{g\} \times W$ is a tolled $(g,h_1),(g,h_2)$-walk containing $(g,x)$. If $x$ is an arbitrary vertex from $H$ and $g'$ an arbitrary vertex from $G-\{g\}$ then $(g,h_1),(g',x),(g,h_2)$ is a tolled $(g,h_1),(g,h_2)$-walk containing $(g',x).$

For the converse suppose that $tn(G \circ H)=2$ and let $D=\{(g,h),(g',h')\}$ be a toll set of $G \circ H.$ Lemma~\ref{interval} implies that $g=g',$ otherwise $(g,h_1) \notin T_{G \circ H}((g,h),(g',h'))$ for any $h_1\neq h.$ Since $T_{G \circ H}((g,h),(g,h'))=V(G \circ H)$ it follows from Remark~\ref{sloj} that $g$ is a universal vertex of $G$ and $tn(H)=2$, which completes the proof. \qed
\end{proof}

On the other hand there is an infinite family of graphs with $tn(G \circ H)=3\cdot tn(G).$ For example, it is easy to see that $tn(P_n \circ K_{1,m})=3\cdot tn(P_n)=6$ for every $n \geq 4, m \geq 3.$ Now we focus in finding a characterization of graphs for which the bound from Theorem~\ref{meja3} is tight. 

From the proof of Theorem~\ref{meja3} it follows that whenever two vertices $g,g'$ from a minimum toll set $S$ of $G$ have a common neighbour $g''$ we can reduce the bound for 2. Instead of adding to $S$ two vertices from one neighboring layer of $g$ and another two from different neighboring layer of $g'$, add only two vertices from $^{g''}\!H$. The discussion of this paragraph gives the following necessary condition for $tn(G \circ H)=3\cdot tn(G).$

\begin{lemma}\label{l:2packing}
Let $G$ and $H$ be arbitrary connected non-trivial graphs where $H$ is not a complete graph. If $tn(G \circ H)=3\cdot tn(G)$, then for every minimum toll set $D$ of $G$ it holds that $N(u) \cap N(v) = \emptyset$ for any two different vertices $u,v \in D$.
\end{lemma}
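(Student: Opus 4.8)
The plan is to argue by contraposition: assuming that some minimum toll set $D$ of $G$ contains two distinct vertices $u,v$ with a common neighbour $w$, I will exhibit a toll set of $G\circ H$ of size $3|D|-2$, which together with Theorem~\ref{meja3} contradicts $tn(G\circ H)=3\cdot tn(G)=3|D|$.

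I would begin by recalling the toll set built in the proof of Theorem~\ref{meja3}. Fix one vertex $(g,h_g)$ in each layer $^g\!H$ with $g\in D$, and let $S_0$ denote this set of $|D|$ vertices. Since $D$ is a toll set of $G$, every $x\in V(G)\setminus D$ lies in $T_G(g,g')$ for some $g,g'\in D$ with $x\neq g,g'$, and by Lemma~\ref{interval} the interval $T_{G\circ H}((g,h_g),(g',h_{g'}))$ contains $(T_G(g,g')-\{g,g'\})\times V(H)$; hence the whole layer $^x\!H$ is covered by $T_{G\circ H}[S_0]$. Thus the only vertices possibly missing from $T_{G\circ H}[S_0]$ lie in the layers $^g\!H$ with $g\in D$.

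To cover these layers I use Remark~\ref{sloj}: for any $g'\in V(G)$ and any distinct non-adjacent $h,h'\in V(H)$ (which exist because $H$ is not complete), the interval $T_{G\circ H}((g',h),(g',h'))$ contains $N_G(g')\times V(H)$. The standard construction adds two such vertices in a neighbouring layer for each $g\in D$, using $2|D|$ extra vertices. The saving comes from the common neighbour $w$ of $u$ and $v$: since $u,v\in N_G(w)$, the single interval $T_{G\circ H}((w,h),(w,h'))$ contains $N_G(w)\times V(H)\supseteq {}^u\!H\cup{}^v\!H$, so adding just the two vertices $(w,h),(w,h')$ simultaneously covers both layers $^u\!H$ and $^v\!H$. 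For each remaining $g\in D\setminus\{u,v\}$ I add two vertices $(g'_g,h),(g'_g,h')$ in a neighbouring layer $^{g'_g}\!H$ of $g$ as before.

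Collecting these, $S=S_0\cup\{(w,h),(w,h')\}\cup\bigcup_{g\in D\setminus\{u,v\}}\{(g'_g,h),(g'_g,h')\}$ is a toll set of $G\circ H$ of size at most $|D|+2+2(|D|-2)=3|D|-2$, whence $tn(G\circ H)\le 3\cdot tn(G)-2<3\cdot tn(G)$, a contradiction. The argument is essentially a formalisation of the informal remark preceding the statement, so no single step is a serious obstacle; the only points requiring care are verifying that $S_0$ already covers every layer outside $D$ (so that precisely the $D$-layers remain to be handled), and noting that $w\notin\{u,v\}$ (forced since $G$ is loopless), so that replacing the four neighbouring-layer vertices for $u$ and $v$ by the two vertices in $^w\!H$ is genuinely a net saving of two.
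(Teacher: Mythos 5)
Your proposal is correct and follows essentially the same route as the paper: the paper's proof likewise takes two vertices $x,y$ of a minimum toll set with a common neighbour $u$, observes via Remark~\ref{sloj} that the single pair $(u,h),(u,h')$ (with $h,h'$ non-adjacent in $H$) covers both layers $^{x}\!H$ and $^{y}\!H$, and concludes from the construction in Theorem~\ref{meja3} that $tn(G\circ H)\le 3\cdot tn(G)-2$, a contradiction. Your write-up merely spells out the details (coverage of the layers outside $D$ by $S_0$, the exact count $|D|+2+2(|D|-2)=3|D|-2$) that the paper leaves implicit.
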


\begin{proof}
Suppose that there exists a minimum toll set $D$ with different vertices $x,y \in D$ such that $N(x) \cap N(y) \neq \emptyset.$ Let $u \in N(x) \cap N(y)$. Since the vertices of $H$-layers $^{x}\!H$ and $^{y}\!H $ are contained in the toll interval between $(u,h)$ and $(u,h')$ for arbitrary non-adjacent vertices $h,h'\in V(H)$ the arguments of the proof of Theorem~\ref{meja3} imply that $tn(G \circ H) \leq 3\cdot tn(G)-2,$ a contradiction.   \qed
\end{proof}

\begin{corollary}\label{notComplete}
Let $G$ and $H$ be arbitrary connected non-trivial graphs where $H$ is not a complete graph. If $tn(G \circ H)=3\cdot tn(G)$, then $G$ is not a complete graph.
\end{corollary}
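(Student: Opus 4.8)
The plan is to argue by contradiction: assume $G$ is a complete graph $K_n$ (with $n\ge 2$, since $G$ is non-trivial) and that $tn(G\circ H)=3\cdot tn(G)$, then derive a contradiction. The first step is to pin down $tn(K_n)$. In a complete graph every pair of vertices is adjacent, and by the definition of a tolled walk the only tolled walk between two adjacent vertices consists of the two endpoints themselves; hence $T_{K_n}(u,v)=\{u,v\}$ for all $u,v$, the toll closure of any set equals the set itself, and the unique minimum toll set is all of $V(K_n)$. Consequently $tn(K_n)=n$, so $3\cdot tn(G)=3n\ge 6$.

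The cleanest way to finish is to invoke Lemma~\ref{univerzalno}. Since every vertex of $K_n$ is universal, that lemma applies and yields $tn(G\circ H)\le 4$. Combining this with the previous step gives $3n=3\cdot tn(G)=tn(G\circ H)\le 4$, forcing $n\le 1$, which contradicts $n\ge 2$. This already settles the statement, uniformly in $n$.

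More in the spirit of the preceding Lemma~\ref{l:2packing}, one can also reason as follows: the unique minimum toll set of $K_n$ is $D=V(K_n)$, and for $n\ge 3$ any two distinct vertices of $K_n$ share a (third) common neighbour, so $N(u)\cap N(v)\ne\emptyset$ for some $u,v\in D$; by the contrapositive of Lemma~\ref{l:2packing} this forces $tn(G\circ H)\ne 3\cdot tn(G)$. The one point needing care is the degenerate factor $K_2$, which is complete yet has two vertices with no common neighbour, so it escapes Lemma~\ref{l:2packing}. I expect this to be the single genuine obstacle of the argument; it is resolved exactly by the universal-vertex observation above (or directly by Lemma~\ref{2neigh}, since $K_2$ satisfies $N(u)\cup N(v)=V(K_2)$), giving $tn(K_2\circ H)\le 4<6$ and closing the remaining case.
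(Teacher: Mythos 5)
Your main argument is precisely the paper's proof: the paper likewise observes that $K_n$ has a universal vertex and invokes Lemma~\ref{univerzalno} to get $tn(G\circ H)\leq 4 < 3n = 3\cdot tn(G)$, a contradiction (with $tn(K_n)=n$ used implicitly, which you justify explicitly and correctly). Your secondary route via Lemma~\ref{l:2packing}, with the $K_2$ case patched by the universal-vertex bound, is a valid alternative but adds nothing the first argument does not already settle.
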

\begin{proof}
Suppose that $G$ is a complete graph isomorphic to $K_n$. Since $G$ has an universal vertex Lemma~\ref{univerzalno} implies that $tn(G \circ H) \leq 4 < 3 \cdot tn(G) = 3n$, a contradiction. \qed
\end{proof}

A set $S \subseteq V(G)$ is a {\it 2-packing} if $N[u] \cap N[v] = \emptyset$ for any $u,v \in S.$

\begin{lemma}\label{tn2}
Let $G$ and $H$ be arbitrary connected non-trivial graphs where $H$ is not a complete graph. If $tn(G \circ H)=3\cdot tn(G)$, then $tn(G)=2.$
\end{lemma}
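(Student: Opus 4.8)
The plan is to argue by contradiction: assume $t:=tn(G)\ge 3$ and produce a toll set of $G\circ H$ of size strictly less than $3t$, contradicting $tn(G\circ H)=3t$ (the upper bound $tn(G\circ H)\le 3t$ being Theorem~\ref{meja3}). First I would clear away the easy cases. If $G$ has a universal vertex, Lemma~\ref{univerzalno} gives $tn(G\circ H)\le 4<3t$; if $G$ has two vertices $u,v$ with $N(u)\cup N(v)=V(G)$, Lemma~\ref{2neigh} gives the same. Hence I may assume $G$ has no universal vertex and is not dominated by two vertices; moreover $G$ is not complete (Corollary~\ref{notComplete}) and, by Lemma~\ref{l:2packing}, every minimum toll set $D=\{g_1,\dots,g_t\}$ is an open $2$-packing, i.e.\ $N(g_i)\cap N(g_j)=\emptyset$ for $i\ne j$.

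Next I would revisit the construction behind Theorem~\ref{meja3} and try to trim it. That construction uses $t$ \emph{anchors} $(g_i,a_i)$ whose pairwise toll intervals cover $(V(G)\setminus D)\times V(H)$ by Lemma~\ref{interval} (since $D$ is a toll set, $\bigcup_{i\ne j}\big(T_G(g_i,g_j)\setminus\{g_i,g_j\}\big)\supseteq V(G)\setminus D$), together with $t$ neighbour pairs $(g_i',h_1),(g_i',h_2)$ with $h_1h_2\notin E(H)$, whose same-column intervals cover $N_G(g_i')\times V(H)\supseteq{}^{g_i}H$ by Remark~\ref{sloj}; this is $3t$ vertices. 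The choice $h_1h_2\notin E(H)$ is available because $H$ is not complete, and each $g_i$ has a neighbour $g_i'$ because $G$ is connected. The aim is to delete at least one vertex. For fixed $i$, let $C_i$ be the \emph{private} set of $g_i$, i.e.\ the vertices that $D$ covers \emph{only} through intervals having $g_i$ as an endpoint. Since $D$ is a minimum toll set, $C_i\ne\emptyset$. If $C_i=\{g_i\}$, then dropping the anchor $(g_i,a_i)$ loses coverage only of ${}^{g_i}H$ and of the point $(g_i,a_i)$ itself; but ${}^{g_i}H$ is already covered by the neighbour pair at $g_i'$, so the reduced set of size $3t-1$ is still a toll set — the desired contradiction.

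The main obstacle is precisely the claim that, when $t\ge 3$, some minimum toll set has a vertex with singleton (or otherwise recoverable) private set. Here I would exploit the fact that tolled walks may revisit vertices and detour through the remainder of $G$, so that by Lemma~\ref{l:abg}/Proposition~\ref{p:separate} the interval $T_G(x,y)$ is very large under the above non-domination hypotheses: heuristically, a vertex can fail to be covered by an interval not involving $g_i$ only if removing a single closed neighbourhood separates it, which the absence of a two-vertex dominating set and the $2$-packing property are designed to preclude. Making this rigorous means analysing the $C_i$ directly: if every $g_i$ satisfied $|C_i|\ge 2$, I would aim to show that either a neighbour pair at some $g_i'$ already has $N_G(g_i')\times V(H)\supseteq C_i\times V(H)$ (again letting the corresponding anchor be dropped), or else $D$ could be shrunk, contradicting its minimality.

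Throughout, all coverage checks must be carried out for a \emph{single} application of the toll interval, since a toll set is defined by $T_G[S]=V(G\circ H)$ rather than by the iterated toll closure; this is exactly where Lemma~\ref{interval} and Remark~\ref{sloj} do the bookkeeping, and where the ``firewall'' behaviour (that $T_{G\circ H}\big((g,h),(g',h')\big)$ meets the columns of $g$ and $g'$ only in the two endpoints) must be respected. The genuinely delicate point is the treatment of minimum toll sets $D$ containing non-extreme vertices, for such a $g_i$ need not lie in the interior of any $T_G(g_j,g_k)$ with $j,k\ne i$; I expect this case to require a separate sub-argument using that a non-extreme $g_i$ lies in \emph{some} toll interval of $G$ and can therefore be rerouted so that its anchor becomes redundant, which is where the hypothesis $t\ge 3$ (guaranteeing enough remaining anchors to retain cross-coverage) is essential.
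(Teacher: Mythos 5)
Your setup (contradiction with Theorem~\ref{meja3}, the reductions via Lemmas~\ref{univerzalno} and~\ref{2neigh}, Corollary~\ref{notComplete} and Lemma~\ref{l:2packing}, and the observation that the anchor $(g_i,a_i)$ can be deleted whenever the private set satisfies $C_i=\{g_i\}$) is sound, but the proof is not complete: the one statement that carries all the weight --- that for $tn(G)\ge 3$ some minimum toll set has an index $i$ with $C_i=\{g_i\}$, or that the case $|C_i|\ge 2$ for all $i$ can be salvaged --- is exactly what you leave as ``I would aim to show'' and ``I expect this case to require a separate sub-argument''. That claim is not bookkeeping; it is the entire content of the lemma, and nothing in your sketch (the heuristic that non-domination plus the $2$-packing property force large toll intervals) constitutes a proof of it. Worse, your target condition pulls in the opposite direction from what is actually provable: the paper shows that, after relabelling, $u_1\in T_G(u_2,u_3)$ for three vertices of a minimum toll set $D$, i.e.\ $g_1\notin C_1$; since minimality of $D$ forces every $C_i$ to be nonempty, at that index $C_1$ contains a vertex different from $g_1$, so your condition $C_i=\{g_i\}$ fails precisely where the provable structural statement points. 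You would need an independent argument producing a \emph{different} index with a singleton private set, and you give none.

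The paper's proof spends essentially all of its effort on a structural claim that is provable and makes the trimming immediate: if $tn(G)=k\ge 3$, then (using Corollary~\ref{notComplete} and Lemma~\ref{l:2packing}) $D$ contains three vertices that are not pairwise adjacent, and a case analysis --- all three pairwise non-adjacent, or exactly one edge among them --- combined with Lemma~\ref{l:abg} and an explicit walk construction (take a $u_2,u_3$-path, which is forced to meet $N[u_1]$ by the separation given by Lemma~\ref{l:abg}, and insert $u_1$ after the first vertex of $N[u_1]$ on it; disjointness of the closed neighbourhoods guarantees the result is a tolled walk) yields $u_1\in T_G(u_2,u_3)$ after relabelling. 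With this in hand one does not delete an anchor at all: keep all $k$ anchors and omit the two neighbour-pair vertices at $u_1'$, since by Lemma~\ref{interval} the layer ${}^{u_1}\!H$ lies in $T_{G\circ H}((u_2,h),(u_3,h))$. This saves two vertices and gives a toll set of size $3k-2<3k$, the desired contradiction. If you want to complete your proposal, the statement you should prove is precisely this containment $u_i\in T_G(u_j,u_l)$; once you have it, the private-set machinery becomes unnecessary.
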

\begin{proof}
Suppose that $tn(G) > 2$ and let $D=\{u_1,\ldots , u_k\}$ be a minimum toll set of $G$. We will prove that there exist three different indices $i,j,l$ from $\{1,\ldots , k\}$ such that $u_i \in T_G(u_j,u_l).$ Suppose first that there exist three pairwise non-adjacent vertices $u_1,u_2,u_3$ in $D$. Therefore using Lemma~\ref{l:2packing} the set $\{u_1,u_2,u_3\}$ is a 2-packing. Suppose that $u_3 \notin T_G(u_1,u_2)$ (otherwise the desired assertion is already proved). Then without loss of generality we may assume using Lemma~\ref{l:abg} that $N[u_1]$ separate $u_3$ from $u_2.$ Since $G$ is connected there exist $u_2,u_3$-path $P$ that contains a vertex from $N[u_1]$. If $P$ contains $u_1$ then $u_1\in T_G(u_2,u_3)$. Otherwise we construct the walk $W$ from $P$ in such way that we add $u_1$ after the first vertex from $N[u_1]$ that appeared on $P$. Since $d(u_1,u_3),d(u_1,u_2) \geq 2$, $W$ is a tolled walk and thus $u_1\in T_G(u_2,u_3).$

Suppose now that $u_1$ and $u_3$ are adjacent but $u_2$ is not adjacent to $u_1,u_3$. (There exist three vertices from $D$ that are not pairwise adjacent, as $G$ is not a complete graph by Corollary~\ref{notComplete}. If the three vertices induce a path, then the desired assertion is already proved.) It follows from Lemma~\ref{l:2packing} that $N(u_1) \cap N(u_3) = N[u_1] \cap N[u_2]= N[u_2] \cap N[u_3] = \emptyset.$ Suppose that $u_3 \notin T_G(u_1,u_2)$ (otherwise the desired assertion is already proved). Then it follows from Lemma~\ref{l:abg}, that $N[u_1]-\{u_3\}$ separates $u_2$ and $u_3$ or $N[u_2]$ separates $u_1$ and $u_3$. The last assertion is not possible as $u_1$ and $u_3$ are adjacent. Thus $N[u_1]-\{u_3\}$ separates $u_2$ and $u_3$. Since $G$ is connected, there exists an $u_2,u_3$-path $P$ that intersect $N[u_1]-\{u_3\}$. Let $w$ be the first vertex on $P$ that is from $N[u_1]$ and let $R$ be the $u_2,w$-subpath of $P$. Then $R,u_1,u_3$ is a tolled $u_2,u_3$-walk containing $u_1$. Note that vertices from $R-\{w\}$ are not adjacent to $u_3$, otherwise $N[u_1]-\{u_3\}$ would not separate $u_2$ and $u_3$.

Thus we may assume without loss of generality that $u_1 \in T_G(u_2,u_3)$. Now we will construct a toll set $S$ of $G \circ H$ with the size less than $3\cdot tn(G)$. The construction goes in the same way as in the proof of Theorem~\ref{meja3}. 
For any vertex $g \in D$ we put in $S$ an arbitrary vertex from $^g\!H.$ Then the toll closure of $S$ contain all vertices of $G \circ H$ except some vertices in $^g\!H$ for $g \in D-\{u_1\}$. (Note that the vertices from $^{u_1}\!H$ lie on the toll interval between $(u_2,h)$ and $(u_3,h)$ for some $h \in V(H)$.)  But those vertices can be covered with the intervals between the vertices in the neighboring layers. Thus, for ever $g \in D-\{u_1\}$, we add to $S$ $(g',h),(g',h')$, where $g'$ is a neighbour of $g$ in $G$ and $h,h'$ are arbitrary different, non-adjacent vertices of $H$. Thus $S$ is a toll set of $G \circ H$ of size $3 \cdot tn(G)-2$, which is a contradiction.  \qed
\end{proof}

\begin{lemma}\label{tnHvsaj3}
Let $G$ and $H$ be arbitrary connected non-trivial graphs where $H$ is not a complete graph. If $tn(G \circ H)=3\cdot tn(G)$, then $tn(H)>2.$
\end{lemma}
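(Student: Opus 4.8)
The plan is to prove the contrapositive in the following strong form: if $tn(H)=2$ (which, since $H$ is non-trivial and hence $tn(H)\ge 2$, is the only alternative to $tn(H)>2$), then in fact $tn(G\circ H)\le 2\cdot tn(G)$. As $tn(G)\ge 2$, this gives $tn(G\circ H)\le 2\cdot tn(G)<3\cdot tn(G)$, contradicting the hypothesis $tn(G\circ H)=3\cdot tn(G)$. So I would assume for contradiction that $tn(H)=2$ and exhibit an explicit toll set of $G\circ H$ of size $2\cdot tn(G)$.

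First I would fix the ingredients. Let $\{h_1,h_2\}$ be a minimum toll set of $H$. Because the toll number is defined via a single application of the toll-closure operator, being a toll set means $T_H(h_1,h_2)=T_H[\{h_1,h_2\}]=V(H)$; and since $H$ is not complete, $h_1$ and $h_2$ must be non-adjacent (otherwise $T_H(h_1,h_2)=\{h_1,h_2\}\ne V(H)$). Likewise let $D=\{u_1,\dots,u_k\}$ be a minimum toll set of $G$, so $k=tn(G)\ge 2$ and $T_G[D]=V(G)$. I then set $$S=\{(u_i,h_1),(u_i,h_2)\,:\,1\le i\le k\},$$ a set of size $2k=2\cdot tn(G)$, and claim $T_{G\circ H}[S]=V(G\circ H)$.

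The verification splits according to the two families of layers. For the layers over $D$, I would apply Remark~\ref{sloj}: since $h_1,h_2$ are non-adjacent and $T_H(h_1,h_2)=V(H)$, it gives $T_{G\circ H}((u_i,h_1),(u_i,h_2))=N_G[u_i]\times V(H)$, which in particular contains the whole layer $^{u_i}\!H$; hence $D\times V(H)\subseteq T_{G\circ H}[S]$. For a vertex $g\in V(G)-D$, since $T_G[D]=V(G)$ there is a pair $u_i\ne u_j$ in $D$ with $g\in T_G(u_i,u_j)$, and automatically $g\notin\{u_i,u_j\}$; Lemma~\ref{interval} then yields $\{g\}\times V(H)\subseteq ((T_G(u_i,u_j)-\{u_i,u_j\})\times V(H))\subseteq T_{G\circ H}((u_i,h_1),(u_j,h_1))$. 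Thus $(V(G)-D)\times V(H)\subseteq T_{G\circ H}[S]$ as well, and together these two inclusions cover all of $V(G\circ H)$.

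This makes $S$ a toll set of size $2\cdot tn(G)$, completing the contradiction. There is no genuine obstacle here beyond bookkeeping; the two points to be careful about are that the toll-set condition is a \emph{single} union of intervals (so that $T_G[D]=V(G)$ and $T_H(h_1,h_2)=V(H)$ hold literally, not merely after iterating the toll-closure), and that the non-adjacency of $h_1,h_2$ is exactly what activates the first case of Remark~\ref{sloj}. Alternatively, one could first invoke Lemma~\ref{tn2} to reduce to the case $tn(G)=2$ and then build a toll set of size $4$ in the spirit of Lemmas~\ref{univerzalno} and~\ref{2neigh}, but the argument above requires no such reduction and works uniformly for every value of $tn(G)$.
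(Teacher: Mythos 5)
Your proof is correct and follows essentially the same route as the paper: the paper's proof is the one-line observation that $tn(H)\leq 2$ forces $tn(G\circ H)\leq 2\cdot tn(G)$, a contradiction, and your argument simply supplies the explicit construction (the toll set $D\times\{h_1,h_2\}$, verified via Lemma~\ref{interval} and Remark~\ref{sloj}) that justifies that inequality. Your write-up is a fully detailed version of what the paper leaves implicit, with no gaps.
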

\begin{proof}
Suppose that $tn(H) \leq 2$. Then $tn(G \circ H) \leq 2 \cdot tn(G)$, a contradiction. \qed
\end{proof}

\begin{lemma}\label{pogoj}
Let $G$ and $H$ be arbitrary connected non-trivial graphs where $H$ is not a complete graph and $G$ is not isomorphic to $K_2$. Suppose that the following conditions are satisfied:
\begin{enumerate}
\item $tn(G)=2=|Ext(G)|$ with $Ext(G)=\{u,v\}$;
\item $tn(H) > 2$;
\item $N_G(x) \cup N_G(y) \neq V(G)$ for any $x, y \in V(G)$;
\item $d_G(u,v) \geq 3$ and if $d_G(u,v)=3$ then $\forall z \in N_G(u') \cup N_G(v')$, $\exists x \notin N_G(u') \cup N_G(v')$ such that $x \notin T_G(u',z) \cup T_G(v',z)$, where $u'\in N_G(u)$ and $v' \in N_G(v)$ are arbitrary adjacent vertices of $G$.   
\end{enumerate}
Then $tn(G \circ H) = 3 \cdot tn(G)$. 
\end{lemma}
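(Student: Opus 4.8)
The plan is to prove the equality through two matching bounds. The upper bound $tn(G\circ H)\le 3\cdot tn(G)$ is exactly Theorem~\ref{meja3}, so (since condition~(1) gives $tn(G)=2$) all the work lies in the lower bound $tn(G\circ H)\ge 6$. I would argue by contradiction: suppose $S$ is a toll set of $G\circ H$ with $|S|\le 5$. Two facts set up the argument. First, because $|Ext(G)|=tn(G)=2$ and $Ext(G)$ is contained in every toll set, $\{u,v\}$ is the \emph{unique} minimum toll set of $G$; in particular $T_G(u,v)=V(G)$. Second, condition~(4) gives $d_G(u,v)\ge 3$, so $N_G(u)\cap N_G(v)=\emptyset$.

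The key reduction is a coverage dichotomy for the two ``extreme'' layers $^u\!H$ and $^v\!H$. Fix $h\in V(H)$; as $S$ is a toll set, $(u,h)\in T_{G\circ H}(s,s')$ for some $s=(g_1,h_1),s'=(g_2,h_2)\in S$. If $g_1\ne g_2$, then Lemma~\ref{interval} together with the fact that $u$ is extreme in $G$ (so $u\notin T_G(g_1,g_2)\setminus\{g_1,g_2\}$) forces $(u,h)\in\{s,s'\}\subseteq S$. If $g_1=g_2=g$, then by Remark~\ref{sloj} we need $u\in N_G[g]$, and $(u,h)$ is covered either through a same-layer pair inside $^u\!H$ (giving $h\in T_H(h_1,h_2)$) or, when $g\in N_G(u)$ and $h_1h_2\notin E(H)$, through a non-adjacent pair lying in a single neighbouring layer $^g\!H$, which then covers all of $^u\!H$ at once. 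Hence $^u\!H$ is entirely covered if and only if either \textbf{(III)} the trace $A_u=\{h:(u,h)\in S\}$ is a toll set of $H$, whence $|S\cap{}^u\!H|\ge tn(H)\ge 3$ by condition~(2), or \textbf{(II)} $S$ contains a non-adjacent pair inside $^{u'}\!H$ for some $u'\in N_G(u)$. The same dichotomy holds for $^v\!H$, and since $N_G(u)\cap N_G(v)=\emptyset$, any helper layer for $u$ differs from any helper layer for $v$.

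Now I run the case analysis on the types used for $^u\!H$ and $^v\!H$. If both are of type \textbf{(III)}, then $|S|\ge 3+3=6$, a contradiction. If one is \textbf{(II)} and the other \textbf{(III)} -- say $^u\!H$ via a helper pair in $^{u'}\!H$ and $^v\!H$ via three vertices of $^v\!H$ -- then $|S|\ge 2+3=5$ forces $S$ to consist of exactly these five vertices; but then the helper layer $^{u'}\!H$ cannot itself be covered: it receives only its two $S$-vertices, which is not a toll set of $H$ (condition~(2)), while no cross pair $(u',\cdot),(v,\cdot)$ places $u'$ strictly inside $T_G(u',v)$ and $u'\notin N_G(v)$, so $^{u'}\!H$ retains an uncovered vertex. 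The remaining, and hardest, case is when both $^u\!H$ and $^v\!H$ are covered by helpers, in distinct layers $^{u'}\!H$ and $^{v'}\!H$; this consumes four vertices of $S$ and leaves at most one spare vertex $(e,f)$. The two helper layers must now be fully covered, and condition~(2) forbids covering either by its own two vertices. I would split on whether $u'v'\in E(G)$. If $u'v'\in E(G)$, then $u-u'-v'-v$ is a path, so condition~(4) gives $d_G(u,v)=3$; the four helper vertices cover only $N_G[u']\cup N_G[v']=N_G(u')\cup N_G(v')$, a proper subset of $V(G)$ by condition~(3), and the only extra coverage $(e,f)$ can add is $T_G(e,u')\cup T_G(e,v')$ with $e\in N_G(u')\cup N_G(v')$ (any other placement leaves $^e\!H$ uncovered, since here $T_G(u',v')=\{u',v'\}$ contributes no intermediate layer). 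This is \emph{exactly} the configuration forbidden by the technical part of condition~(4): taking $z=e$ yields a vertex $x\notin N_G(u')\cup N_G(v')$ with $x\notin T_G(u',e)\cup T_G(v',e)$, so $^x\!H$ stays uncovered -- contradiction.

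The final subcase, $u'v'\notin E(G)$ (so $d_G(u,v)\ge 4$), is the step I expect to be the main obstacle. Covering both $^{u'}\!H$ and $^{v'}\!H$ with only the spare vertex would require cross pairs realizing simultaneously $u'\in T_G(e,v')$ and $v'\in T_G(e,u')$, while $^e\!H$ itself remains covered. I would show this cannot happen: if $e\in N_G(u')$ then $T_G(e,u')=\{e,u'\}$ collapses to an edge and $^{v'}\!H$ is left uncovered (and symmetrically for $e\in N_G(v')$), so the only survivor is $e\in T_G(u',v')\setminus(N_G[u']\cup N_G[v'])$ with the two ``mutual betweenness'' relations above holding at once. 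Ruling out this last configuration cleanly -- proving that a single spare vertex can never repair both helper layers together with its own -- is where I anticipate the real difficulty, and it is precisely the place where conditions~(2), (3) and the distance hypothesis in~(4) must be combined (this is also the phenomenon that the paper's toll-dominating triples are designed to capture). Once this is settled, every case yields a contradiction for $|S|\le 5$, so $tn(G\circ H)\ge 6$, and with Theorem~\ref{meja3} we conclude $tn(G\circ H)=6=3\cdot tn(G)$.
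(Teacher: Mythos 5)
Your strategy coincides with the paper's: the upper bound is Theorem~\ref{meja3}, and the lower bound runs through the same dichotomy (each extreme layer $^{u}\!H$, $^{v}\!H$ is covered either by a toll set of $H$ inside that layer, or by a non-adjacent pair in a layer over a neighbour of $u$ resp.\ $v$), followed by the same case analysis. Your treatment of the cases ``both from within'', ``mixed'', and ``both by helpers with $u'v'\in E(G)$'' is correct and matches the paper, including the roles of conditions (3) and (4). However, the last case --- helper pairs in non-adjacent layers $^{u'}\!H$ and $^{v'}\!H$ plus one spare vertex $(e,f)$ --- is exactly where you stop. You correctly reduce it to the claim that no vertex $x$ can satisfy $u'\in T_G(x,v')$ and $v'\in T_G(x,u')$ simultaneously, but you offer no proof of this claim and explicitly flag it as the anticipated obstacle. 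Since this is the heart of the lower bound, the proposal as it stands has a genuine gap.

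The paper closes this case with a short detour argument relying on condition (1), not on (2), (3) and (4) as you predict. Suppose $u'\in T_G(x,v')$ and $v'\in T_G(x,u')$ for some $x$, and let $W$ be a tolled $x,v'$-walk through $u'$. Since $v'\in T_G(x,u')-\{x,u'\}$, the vertices $x$ and $u'$ are non-adjacent (toll intervals of adjacent pairs are trivial); since $u$ is extreme, hence simplicial, and $u',x$ would both lie in $N(u)$, it follows that $x\notin N(u)$ (otherwise $xu'\in E(G)$). Moreover $u\notin W$ (otherwise $u\in T_G(x,v')$, contradicting extremality of $u$) and $uv'\notin E(G)$ because $d_G(u,v)\ge 3$. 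Hence inserting the detour $u',u,u'$ into $W$ yields a tolled $x,v'$-walk containing $u$, i.e.\ $u\in T_G(x,v')$, contradicting that $u$ is extreme. So the missing ingredient is precisely the extremality/simpliciality of $u$ (and symmetrically $v$) combined with $d_G(u,v)\ge 3$; your expectation that conditions (2), (3) and the technical part of (4) must be combined at this point is misdirected, as those are spent in the other cases. (A minor further slip: $u'v'\notin E(G)$ does not force $d_G(u,v)\ge 4$, since other neighbours of $u$ and $v$ might be adjacent; you do not actually use this, though.)
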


\begin{proof}
As $tn(G)=2$ and all extreme vertices of $G$ are contained in any toll set of $G$, $\{u,v\}$ is a minimum toll set of $G$. Let $S$ be a minimum toll set of $G \circ H$. We will prove that $|S| \geq 3\cdot tn(G) =6$ (we already know from Theorem~\ref{meja3} that $|S| \leq 6$).

Since $u$ is an extreme vertex in $G$ and $^u\!H$ is contained in the toll closure of $S$, it follows from Lemma~\ref{interval} that $S$ either contains vertices $\{(u,h):~h \in  D,~D \textrm{ is a toll set of }H\}$ or $S$ contains $(u',h),(u',h')$, where $u'$ is a neighbor of $u$ in $G$ and $h$ and $h'$ are arbitrary non-adjacent vertices from $H$. Since $v$ is also an extreme vertex of $G$, we get that $S$ either contains vertices $\{(v,h):~h \in  D,~D \textrm{ is a toll set of }H\}$ or $S$ contains $(v',h),(v',h')$, where $v'$ is a neighbor of $v$ in $G$ and $h$ and $h'$ are arbitrary non-adjacent vertices from $H$. We will distinguish 3 cases. If $S$ contains $\{(u,h):~h \in  D,~D \textrm{ is a toll set of }H\} \cup \{(v,h):~h \in  D,~D \textrm{ is a toll set of }H\}$ then $|S| \geq 6$, as $tn(H) > 2$ and the proof is completed. Suppose now that $S$ contains $S'=\{(u,h):~h \in  D,~D \textrm{ is a toll set of }H\}$ and $S$ contains $(v',h),(v',h')$, where $v'$ is a neighbor of $v$ in $G$ and $h$ and $h'$ are arbitrary non-adjacent vertices from $H$ (note that the same follows if we choose toll set of $H$ in the $^v\!H$-layer and two non-adjacent vertices from a neighboring layer of $^u\!H$). Since $tn(H) > 2$ it follows from Lemma~\ref{interval}, that at least one vertex from $H$-layer $^{v'}\!H$ is not contained in the toll closure of $S' \cup \{(v',h),(v',h')\}$. Thus $|S| \geq 6$ and the proof is completed.

Suppose now that $S'=\{(u',h_1),(u',h_2),(v',h_3),(v',h_4)\},$ where $u'$ is a neighbor of $u$ and $v'$ is a neighbor of $v$ in $G$, $h_1h_2,h_3h_4\notin E(H)$ and $h_1 \neq h_2,h_3\neq h_4.$ Since $d(u,v) \geq 3$, $u \neq v$. Suppose first that $u'$ and $v'$ are not adjacent.
Then $u'\neq v'$ and $u$ is not adjacent to $v$ in $G$. Then $^{u'}\!H$, $^{v'}\!H$ are not contained in the toll closure of $S'$. We will prove that $S'$ together with an arbitrary vertex of $G \circ H$ is not a toll set of $G \circ H$. Moreover we will prove that $(^{u'}\!H) \cup (^{v'}\!H)$ is not contained in the toll closure of $S' \cup \{(g,h)\}$ for an arbitrary $(g,h) \in V(G \circ H).$ For this purpose we have to prove that there is no $x \in V(G)$ such that $u' \in T_G(x,v')$ and $v' \in T_G(x,u').$ For the purpose of contradiction suppose that there exists $x \in V(G)$ such that $u' \in T_G(x,v')$ and $v' \in T_G(x,u').$ Let $W$ be a tolled $x,v'$-walk that contains $u'$. Since $u$ is extreme, $u \notin W$ and $u$ is not adjacent to $x$. Indeed, if $u$ is adjacent to $x$, then $u'$ is adjacent to $x$, since $u$ is simplicial, a contradiction. Thus $xWu',u,u'Wv'$ is a tolled walk containing $u$, which contradicts the fact that $u$ is extreme. 

Finally suppose that $u'$ is adjacent to $v'$. We will again prove that $S' \cup \{(g,h)\}$ is not a toll set of $G \circ H$ for any $(g,h) \in V(G \circ H)$. Since $u'$ and $v'$ are adjacent it follow from Lemma~\ref{interval} and Remark~\ref{sloj} that the toll closure of $S'$ contains $(^{x'}\!H)$ for any $x' \in N(u') \cup N(v')$. As condition 3 of this lemma implies that $N(u') \cup N(v') \neq V(G)$, $S'$ contains at least one vertex $(g,h) \in V(G \circ H)$ with $g \neq u',v'$. If $g \notin N(u') \cup N(v')$, then $^{g}\!H$ is not contained in the toll closure of $S' \cup \{(g,h)\}$ and hence $|S| \geq 6$. If $g \in N(u') \cup N(v')$, then it follows from the condition 4 of the lemma that there exists $x \notin N(u') \cup N(v')$ such that $x \notin T_G(u',g) \cup T_G(v',g)$. Thus $^{x}\!H$ is not contained in the toll closure of $S' \cup \{(g,h)\}$ and hence $|S| \geq 6$. \qed

%Therefore $x$ is not adjacent to neither of $u',v'.$ Since $u$ is an extreme vertex of $G$, $u \notin T_G(x,v')$. Thus it follows from Lemma~\ref{l:abg} that $N[x]$ separates $u$ from $v'$ (note that as $u$ is extreme, $N[u]$ is a clique and thus $u$ is not adjacent to $x$) or $N[v']$ separates $u$ from $x$. Suppose first that $N[x]$ separates $u$ from $v'$ which means that any $u,v'$-path contains a vertex from $N[x]$. Since $u'$ is a neighbor of $u$, any $u',v'$-path contains a vertex from $N[x]$. Therefore $N[x]$ separates $u'$ from $v'$ and therefore, using Lemma~\ref{l:abg}, $u' \notin T_G(x,v')$, a contradiction. Suppose now that $N[v']$ separates $u$ from $x$. Then any $u,x$-path in $G$ contains a vertex from $N[v']$. Since $u'$ is adjacent to $u$, also any $u',x$-path contains a vertex from $N[x]$. Therefore it follows from Lemma~\ref{l:abg} that $u' \notin T_G(v',x),$ a contradiction.    \qed
\end{proof}

\begin{theorem}
Let $G$ and $H$ be arbitrary connected non-trivial graphs where $H$ is not a complete graph and $G$ is %extreme complete 
a graph with $|Ext(G)|\geq 2$ not isomorphic to $K_2$. %(to bi lahko nadomestile z: graf z $|Ext(G)|=2$). 
Then $tn(G \circ H)= 3 \cdot tn(G)$ if and only if the following conditions hold:
\begin{enumerate}
\item $tn(G)=2$ with $Ext(G)=\{u,v\}$;
\item $tn(H) > 2$;
\item $N_G(x) \cup N_G(y) \neq V(G)$ for any $x, y \in V(G)$;
\item $d_G(u,v) \geq 3$ and if $d_G(u,v)=3$ then $\forall z \in N_G(u') \cup N_G(v')$, $\exists x \notin N_G(u') \cup N_G(v')$ such that $x \notin T_G(u',z) \cup T_G(v',z)$, where $u'\in N_G(u)$ and $v' \in N_G(v)$ are arbitrary adjacent vertices of $G$.    
\end{enumerate}
\end{theorem}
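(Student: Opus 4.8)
The plan is to prove the two implications separately, noting that the forward (``if'') direction is already in hand. Condition~1 asserts $Ext(G)=\{u,v\}$, which together with the standing hypothesis $|Ext(G)|\ge 2$ gives $|Ext(G)|=2$; thus conditions 1--4 match verbatim the hypotheses of Lemma~\ref{pogoj}, and that lemma delivers $tn(G\circ H)=3\cdot tn(G)$. So the ``if'' direction needs no new argument, and the whole effort goes into the converse.

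For the ``only if'' direction I would assume $tn(G\circ H)=3\cdot tn(G)$ and peel off the four conditions using the preparatory lemmas. Lemma~\ref{tn2} gives $tn(G)=2$, and since $|Ext(G)|\le tn(G)=2$ while $|Ext(G)|\ge 2$ by hypothesis, we obtain $Ext(G)=\{u,v\}$ for two distinct vertices $u,v$; this settles condition~1, fixes $3\cdot tn(G)=6$, and makes $\{u,v\}$ a minimum toll set of $G$. Condition~2 is exactly Lemma~\ref{tnHvsaj3}. For condition~3 I argue contrapositively: if $N_G(x)\cup N_G(y)=V(G)$ for some $x,y$, then either $x=y$ is universal and Lemma~\ref{univerzalno} gives $tn(G\circ H)\le 4$, or $x\ne y$ is a dominating edge and Lemma~\ref{2neigh} gives $tn(G\circ H)\le 4$; either outcome contradicts $tn(G\circ H)=6$.

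The real content is condition~4. I first establish $d_G(u,v)\ge 3$. Applying Lemma~\ref{l:2packing} to the minimum toll set $\{u,v\}$ gives $N_G(u)\cap N_G(v)=\emptyset$, which immediately excludes $d_G(u,v)=2$. To exclude $d_G(u,v)=1$ I use that every extreme vertex is simplicial: if $u\sim v$ then $N[u]$ is a clique, so each vertex of $N_G(u)\setminus\{v\}$ would be adjacent to $v$ and hence lie in $N_G(u)\cap N_G(v)=\emptyset$; thus $N_G(u)=\{v\}$ and, symmetrically, $N_G(v)=\{u\}$, forcing the connected graph $G$ to equal $K_2$, contrary to hypothesis. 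Hence $d_G(u,v)\ge 3$.

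It remains to prove the extra property when $d_G(u,v)=3$, and this is the step I expect to be the main obstacle, since it is the only place demanding an explicit toll-closure computation inside $G\circ H$. I would argue the contrapositive: suppose the property fails for some adjacent pair $u'\in N_G(u)$, $v'\in N_G(v)$, so there is $z\in N_G(u')\cup N_G(v')$ with every $x\notin N_G(u')\cup N_G(v')$ lying in $T_G(u',z)\cup T_G(v',z)$. I then claim that $S'=\{(u',h_1),(u',h_2),(v',h_3),(v',h_4)\}$, with $h_1\not\sim h_2$ and $h_3\not\sim h_4$ in $H$, is a toll set of $G\circ H$ of size $4$. By Remark~\ref{sloj} the two within-layer intervals cover $(N_G(u')\cup N_G(v'))\times V(H)$ in one step --- here $u'\sim v'$ is what makes ${}^{u'}\!H$ and ${}^{v'}\!H$ land in each other's neighbour layers, while $u\in N_G(u')$ and $v\in N_G(v')$ absorb the extreme layers; then Lemma~\ref{interval} applied to the pairs $(u',\cdot),(z,\cdot)$ and $(v',\cdot),(z,\cdot)$ sweeps in every remaining layer ${}^x\!H$ with $x\in T_G(u',z)\cup T_G(v',z)$. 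The delicate point, resolved by condition~3, is that the witness must satisfy $z\ne u',v'$: if $z\in\{u',v'\}$ then $T_G(u',z)\cup T_G(v',z)\subseteq\{u',v'\}$, which would force $N_G(u')\cup N_G(v')=V(G)$ and contradict condition~3; with $z\ne u',v'$ the inter-layer intervals are nondegenerate. The construction then yields $tn(G\circ H)\le 4<6$, a contradiction, so the extra property holds and condition~4 follows.
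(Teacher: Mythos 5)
Your handling of conditions 1--3 and of the inequality $d_G(u,v)\ge 3$ matches the paper's own route (Lemmas~\ref{pogoj}, \ref{tn2}, \ref{tnHvsaj3}, \ref{2neigh}, \ref{l:2packing}); your explicit exclusion of $d_G(u,v)=1$ via simpliciality is a detail the paper leaves implicit, and it is correct. The genuine gap is in the last step, the subcase $d_G(u,v)=3$. Your set $S'=\{(u',h_1),(u',h_2),(v',h_3),(v',h_4)\}$ is \emph{not} a toll set of $G\circ H$, and the flaw is a confusion between lying in the toll closure of $S'$ and belonging to $S'$. A toll set must satisfy $T[S']=\bigcup_{a,b\in S'}T_{G\circ H}(a,b)=V(G\circ H)$, so only intervals whose endpoints are actually in $S'$ may be used. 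You apply Lemma~\ref{interval} to the pairs $(u',\cdot),(z,\cdot)$ and $(v',\cdot),(z,\cdot)$, but $S'$ contains no vertex of the layer ${}^{z}\!H$; the fact that ${}^{z}\!H$ lies inside $T[S']$ (because $z\in N_G(u')\cup N_G(v')$) does not license intervals emanating from it. Concretely, by Remark~\ref{sloj} and Lemma~\ref{interval} (the cross pairs contribute nothing since $u'$ and $v'$ are adjacent, and ${}^{u'}\!H$, ${}^{v'}\!H$ absorb each other because $u'\in N_G(v')$ and $v'\in N_G(u')$), one gets exactly $T[S']=(N_G(u')\cup N_G(v'))\times V(H)$, which by condition~3 is a proper subset of $V(G\circ H)$. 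So this four-vertex set can never work --- indeed, the proof of Lemma~\ref{pogoj} exploits precisely this failure in the opposite direction.

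The repair is what the paper does: add one vertex of the $z$-layer and exhibit a toll set of size $5$, namely $\{(u',h_1),(u',h_2),(v',h_1),(v',h_2),(z,h_1)\}$. The two within-layer intervals cover $(N_G(u')\cup N_G(v'))\times V(H)$, and for every $x\notin N_G(u')\cup N_G(v')$ the negated condition~4 gives $x\in T_G(u',z)\cup T_G(v',z)$ with $x\ne u',v',z$ (your observation that $z\ne u',v'$ by condition~3 is used here), so ${}^{x}\!H$ is swept in by $T_{G\circ H}((u',h_1),(z,h_1))$ or $T_{G\circ H}((v',h_1),(z,h_1))$ via Lemma~\ref{interval} --- now legitimate intervals, since $(z,h_1)$ belongs to the set. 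This yields $tn(G\circ H)\le 5<6=3\cdot tn(G)$, the desired contradiction, and with this replacement the rest of your argument goes through.
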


\begin{proof}
If the four conditions are satisfied, then Lemma~\ref{pogoj} implies that $tn(G \circ H)=3\cdot tn(G)$.

For the converse suppose that $tn(G \circ H)= 3 \cdot tn(G)$. Then the conditions 1--3 follow from lemmas Lemma~\ref{tn2}, Lemma~\ref{tnHvsaj3}, Lemma~\ref{2neigh}. We have already explained that whenever two vertices from minimum toll set have a common neighbor, $tn(G \circ H) < 3 \cdot tn(G)$. Thus $d(u,v) \geq 3$. Suppose that $d(u,v)=3$ and that there exists $z \in N(u') \cup N(v')$ such that for any $x \notin N(u') \cup N(v')$, $x \in T_G(u',z) \cup T_G(v',z)$, where $u,u',v',v$ is a $u,v$-path of length 3. Let $h, h'$ be arbitrary nonadjacent vertices from $H$. Then $\{(u',h_1),(u',h_2),(v',h_1),(v',h_2),(z,h_1)\}$ is a toll set of size less than $3 \cdot tn(G)$, a contradiction.
\qed
\end{proof}

The characterization of graphs with $tn(G \circ H)= 3 \cdot tn(G)$ is incomplete in case when $|Ext(G)|\in \lbrace 0,1 \rbrace$ (note that $|Ext(G)|\leq tn(G)=2$). Figure \ref{pr1} shows examples of graphs $G$ and $G'$ with $tn(G)=tn(G')=2$, $Ext(G)=Ext(G')=\emptyset$, but $tn(G \circ H)=3<3\cdot tn(G)$ and $tn(G' \circ H)=6=3\cdot tn(G')$ for any graph $H$ with $tn(H)>2$.

\begin{figure}[h]
\centering
\begin{tikzpicture}[scale=1]

	\draw (0,0) -- (4,0);
	\draw (2,0) -- (2,2);
	\node[] at (0,2) {$G$};

	\foreach \x in {0,1,2,3,4}
		{
		\filldraw [fill=white, draw=black,thick] (\x,0) circle (3pt);
		}
		\filldraw [fill=white, draw=black,thick] (2,1) circle (3pt);
		\filldraw [fill=white, draw=black,thick] (2,2) circle (3pt);
\end{tikzpicture}
\hspace{1cm}
\begin{tikzpicture}[scale=1]

	\draw (0,0) -- (2,0);
	\draw (0,2) -- (2,2);
	\draw (0,0) -- (0,2);
	\draw (2,0) -- (2,2);
	
	\draw (0,1) -- (2,0);
	\draw (0,2) -- (1,0);
	\draw (0,2) -- (2,1);
	\draw (1,2) -- (2,0);

		\node[] at (-1,2) {$G'$};

	\foreach \x in {0,1,2}
		{
		\filldraw [fill=white, draw=black,thick] (\x,0) circle (3pt);
		\filldraw [fill=white, draw=black,thick] (\x,2) circle (3pt);
		}
		\filldraw [fill=white, draw=black,thick] (0,1) circle (3pt);
		\filldraw [fill=white, draw=black,thick] (2,1) circle (3pt);
\end{tikzpicture}

\caption{Graphs $G$ and $G'$.\label{pr1}}
\end{figure}
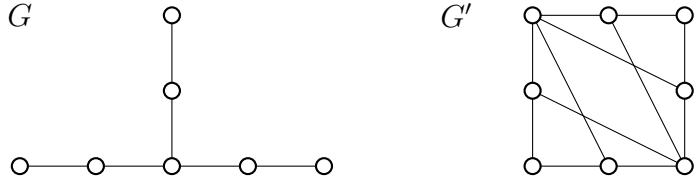

\begin{question}
%If $tn(G \circ H)=3\cdot tn(G)$, can it happen that $|Ext(G)|=1$?
Is there a graph $G$ with $|Ext(G)|=1$ with respect to toll convexity?
\end{question}

\begin{question}
Can you give a complete characterization of graphs with $tn(G \circ H)=3\cdot tn(G)$?
\end{question}

Finally we give the exact toll number for the lexicographic product of two connected graphs $G$ and $H$, where $H$ is not a complete graph. We use the so-called toll-dominating triple, which was introduced in~\cite{bkt} in terms of the geodetic number of a graph.

Let $A,B,C$ be pairwise disjoint subsets of a vertex set of a graph $G$. Then $(A,B,C)$ is a {\it{toll-dominating triple}} if for any $x \in V(G)-C$ there exist $u,v \in A \cup B \cup C$ with $x \in T_G(u,v)-\{u,v\}$ or there exists $w \in B \cup C$ with $x \in N_G(w).$ 

\begin{lemma}\label{012}
Let $S$ be a minimum toll set of $G \circ H.$ Then $|S \cap$ $^g\!H| \in \{0,1,2,tn(H)\}$ for any $g \in V(G).$
\end{lemma}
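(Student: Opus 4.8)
The plan is to fix $g \in V(G)$, write $A = S \cap\, ^g\!H$ and $k = |A|$, and show that $k \ge 3$ forces $k = tn(H)$; the values $0,1,2$ need no argument, since they already lie in $\{0,1,2,tn(H)\}$. Throughout I would use that a \emph{toll set} is defined by a single application of the interval operator, so $T_{G\circ H}[S]=\bigcup_{p,q\in S}T_{G\circ H}(p,q)$ must already equal $V(G\circ H)$ and there is no iteration to track. As in the rest of the section, I assume $H$ is not complete.

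First I would record how a single pair of vertices of $S$ can meet the layer $^g\!H$. Combining Lemma~\ref{interval} and Remark~\ref{sloj}, for $p,q\in S$ the set $T_{G\circ H}(p,q)\cap\, ^g\!H$ is one of: (i) all of $^g\!H$, which happens exactly when $p,q$ lie in layers $^a\!H,\,^b\!H$ with $a\neq b$, $a,b\neq g$ and $g\in T_G(a,b)-\{a,b\}$, or when $p,q$ lie in a common layer $^a\!H$ with $a\neq g$, non-adjacent $H$-coordinates, and $g\in N_G(a)$; (ii) $\{g\}\times T_H(h_i,h_j)$, when $p=(g,h_i),q=(g,h_j)$ both lie in $A$; or (iii) at most a single endpoint in all remaining cases. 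Call a layer \emph{externally covered} if some pair of $S$ realises case (i); this is an all-or-nothing property of the layer, and such a pair lies entirely in $S\setminus A$. From (ii) the total layer contribution of the intra-layer pairs of $A$ is exactly $\{g\}\times T_H[p_H(A)]$. Hence $^g\!H\subseteq T_{G\circ H}[S]$ if and only if $^g\!H$ is externally covered \emph{or} $p_H(A)$ is a toll set of $H$.

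Now suppose $k\ge 3$. If $^g\!H$ is externally covered, then every vertex of $A$ is already covered, so the only role of $A$ is to help cover \emph{other} layers, and the crucial (easy) observation is that this help is independent of the $H$-coordinates inside $A$: a single vertex $(g,h_i)$ paired with any outside $(g',h')\in S$ covers $(T_G(g,g')-\{g,g'\})\times V(H)$ regardless of $h_i$, while the coverage $N_G(g)\times V(H)$ from Remark~\ref{sloj} needs only one non-adjacent pair inside $A$. Thus replacing $A$ by a single vertex of $A$, or by one non-adjacent pair of $A$ if one exists, keeps $S$ a toll set while shrinking it, contradicting minimality; so this case cannot occur with $k\ge 3$. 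Therefore $^g\!H$ is not externally covered, forcing $p_H(A)$ to be a toll set of $H$ and hence $k\ge tn(H)$. If $k>tn(H)$, I would replace $A$ by $\{g\}\times D_H$ for a minimum toll set $D_H$ of $H$: the layer $^g\!H$ stays covered because $D_H$ is a toll set, and the coverage that $A$ supplied to the other layers is reproduced by the same coordinate-independent mechanism, using that $D_H$ contains a non-adjacent pair. This again contradicts minimality, leaving $k=tn(H)$.

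The one auxiliary fact I would isolate is that every toll set $D$ of a non-complete graph $H$ contains two non-adjacent vertices: otherwise $D$ induces a clique, so $T_H(h_i,h_j)=\{h_i,h_j\}$ for all pairs, whence $T_H[D]=D$, forcing $D=V(H)$ and $H$ complete. The genuine work, and the step I expect to be the main obstacle, is the careful bookkeeping in the two replacement arguments: one must check that deleting or swapping the vertices of $A$ does not uncover any vertex in a bridged or neighbouring layer. This is precisely where the coordinate-independence of the bridge coverage $(T_G(g,g')-\{g,g'\})\times V(H)$ and of the neighbour coverage $N_G(g)\times V(H)$ is used, together with the fact that the external covering pair, when it exists, sits in $S\setminus A$ and is therefore left intact by the surgery on the layer $^g\!H$.
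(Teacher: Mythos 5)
Your proof is correct, and it is in fact strictly more complete than the paper's own argument. The overlap: your externally-covered case is, modulo organization, the paper's entire proof --- the paper assumes $2 < |S\, \cap {}^g\!H| < tn(H)$, uses $k < tn(H)$ to produce a vertex of ${}^g\!H$ missed by the intra-layer pairs, concludes via Lemma~\ref{interval} that a pair of $S-S_g$ covers the whole layer, and then shrinks $S_g$, contradicting minimality. You differ in two substantive ways, both to your advantage. First, your up-front dichotomy (layer externally covered, or else $p_H(A)$ is a toll set of $H$) forces you to treat the case $k > tn(H)$, which the paper's proof never addresses at all, even though the lemma --- and its use in Theorem~\ref{exact}, where $|S|=|A|+2\cdot|B|+tn(H)\cdot|C|$ is read off --- requires excluding it; your swap of $A$ for $\{g\}\times D_H$, justified by the auxiliary fact that every toll set of a non-complete graph contains a non-adjacent pair, is exactly what is needed there, since in that case $p_H(A)$ may be a toll set of $H$ and no external pair need cover the layer. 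Second, in the clique case you retain a single vertex of $A$, whereas the paper sets $S_g'=\emptyset$; this is not a cosmetic difference, because a pair formed by a vertex of $A$ and a vertex of $S-A$ covers $(T_G(g,g')-\{g,g'\})\times V(H)$, and deleting all of $A$ could destroy coverage of other layers that only such pairs provided --- your coordinate-independence observation is precisely what makes the retained vertex suffice. So both proofs rest on the same two tools (Lemma~\ref{interval} and Remark~\ref{sloj}) and the same minimality surgery; yours buys completeness in the $k>tn(H)$ case and repairs the clique-case deletion, at the cost of the extra bookkeeping you flagged.
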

\begin{proof}
Suppose that there exists $g \in V(G)$ with $2 < |S \cap$ $^g\!H| < tn(H)$ and let $S_g=S \cap$ $^g\!H=\{(g,h_1),\ldots , (g,h_k)\}.$ Since $k < tn(H)$ there exists $(g,h) \in$ $^g\!H$ such that $(g,h)$ is not in the toll closure of $S_g.$ As $S$ is a toll set of $G \circ H$, there exist $(g_1,h_1),(g_2,h_2) \in S-S_g$ such that $(g,h) \in T_G((g_1,h_1),(g_2,h_2))$. Therefore it follows from Lemma~\ref{interval} that $^g\!H \subset T((g_1,h_1),(g_2,h_2)).$ Now let $S_g'$ be a set of two non-adjacent vertices from $S_g$ (if $S_g$ is a clique, let $S_g'=\emptyset$). Then $(S-S_g)\cup S_g' $ is a toll set of smaller size than $S$, a contradiction. \qed
\end{proof}

\begin{theorem}\label{exact}
Let $G$ and $H$ be an arbitrary non-trivial graphs where $H$ is not a complete graph. Then $$tn(G \circ H)=\min\{|A|+2\cdot |B|+tn(H)\cdot |C|:~(A,B,C) \textrm{ is a toll-dominating triple of } G\}.$$
\end{theorem}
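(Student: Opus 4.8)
The plan is to prove the two inequalities $tn(G\circ H)\le \min\{|A|+2|B|+tn(H)|C|\}$ and $tn(G\circ H)\ge \min\{|A|+2|B|+tn(H)|C|\}$ separately, the minimum ranging over all toll-dominating triples $(A,B,C)$ of $G$. Throughout I would use the interval formula of Lemma~\ref{interval}, the intra-layer formula of Remark~\ref{sloj}, and the layer-size restriction of Lemma~\ref{012}. I also record two facts: since $H$ is non-trivial and not complete, $tn(H)\ge 2$, and any toll set of such an $H$ must contain two non-adjacent vertices (otherwise its toll closure would be a clique, hence not $V(H)$).

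For the upper bound I would fix an arbitrary toll-dominating triple $(A,B,C)$ and a minimum toll set $D_H$ of $H$, then build $S\subseteq V(G\circ H)$ by placing one vertex in ${}^a\!H$ for each $a\in A$, two non-adjacent vertices $(b,h_1),(b,h_2)$ with $h_1h_2\notin E(H)$ (which exist as $H$ is not complete) in ${}^b\!H$ for each $b\in B$, and the full copy $\{(c,h):h\in D_H\}$ in ${}^c\!H$ for each $c\in C$. Then $|S|=|A|+2|B|+tn(H)|C|$, and I claim $S$ is a toll set. Each layer ${}^c\!H$ with $c\in C$ is covered within itself because $D_H$ is a toll set of $H$ (Remark~\ref{sloj}). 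For $x\in V(G)-C$ the defining property of the triple yields either $u,v\in A\cup B\cup C$ with $x\in T_G(u,v)-\{u,v\}$ or $w\in B\cup C$ with $x\in N_G(w)$; in the first case Lemma~\ref{interval} applied to two $S$-vertices in the layers $u,v$ puts $(T_G(u,v)-\{u,v\})\times V(H)\supseteq{}^x\!H$ into the toll closure, and in the second case the two non-adjacent $S$-vertices in layer $w$ give, via Remark~\ref{sloj}, $N_G(w)\times V(H)\supseteq{}^x\!H$ in the closure. Hence the whole layer ${}^x\!H$ is covered, $S$ is a toll set, and minimizing over all triples gives $tn(G\circ H)\le\min$.

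For the lower bound I would start from a minimum toll set $S$ of $G\circ H$ and read a triple off its layer sizes. By Lemma~\ref{012} each $|S\cap{}^g\!H|\in\{0,1,2,tn(H)\}$, so I set $C=\{g:|S\cap{}^g\!H|=tn(H)\}$, $B=\{g:|S\cap{}^g\!H|=2\}\setminus C$ and $A=\{g:|S\cap{}^g\!H|=1\}$ (the subtraction matters only when $tn(H)=2$, where it keeps the three sets disjoint and harmlessly moves the size-$2$ layers into $C$). These are pairwise disjoint and $|A|+2|B|+tn(H)|C|=\sum_g|S\cap{}^g\!H|=|S|$. It then remains to verify that $(A,B,C)$ is toll-dominating. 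Fix $x\in V(G)-C$; then $|S\cap{}^x\!H|<tn(H)$, so the projection of $S\cap{}^x\!H$ is not a toll set of $H$ and there is an $h^*$ with $(x,h^*)\notin S$ that is not reachable by any intra-layer interval inside ${}^x\!H$. Since $S$ is a toll set, $(x,h^*)\in T_{G\circ H}((g_1,h_1),(g_2,h_2))$ for some $S$-vertices; by Lemma~\ref{interval} and Remark~\ref{sloj} the choice of $h^*$ rules out $g_1=g_2=x$, so either $g_1\neq g_2$ and $x\in T_G(g_1,g_2)-\{g_1,g_2\}$ with $g_1,g_2\in A\cup B\cup C$, or $g_1=g_2=g\neq x$, where Remark~\ref{sloj} forces $h_1h_2\notin E(H)$ and $x\in N_G(g)$ with $|S\cap{}^g\!H|\ge 2$, i.e.\ $g\in B\cup C$. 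This is exactly the toll-dominating property, so $\min\le|A|+2|B|+tn(H)|C|=|S|=tn(G\circ H)$, which together with the upper bound proves the theorem.

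The main obstacle is the bookkeeping in the lower bound: one must translate the single statement ``the uncovered vertex $(x,h^*)$ still lies in the toll closure of $S$'' into precisely one of the two combinatorial alternatives defining a toll-dominating triple, and along the way confirm through Remark~\ref{sloj} that whenever a layer $g\neq x$ covers $(x,h^*)$ it must contain two non-adjacent $S$-vertices, so that the whole set $N_G(g)\times V(H)$ (rather than only a sliver of layer $g$) enters the closure. The degenerate case $tn(H)=2$, in which the sizes $2$ and $tn(H)$ coincide, also requires the small adjustment to the definitions of $B$ and $C$ indicated above, chosen so that the three sets remain disjoint while the weighted count stays equal to $|S|$.
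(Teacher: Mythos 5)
Your proposal is correct and follows essentially the same route as the paper: the same construction $(A\times\{h_1\})\cup(B\times\{h_1,h_2\})\cup(C\times D)$ for the upper bound, and for the lower bound the same extraction of $(A,B,C)$ from the layer sizes of a minimum toll set via Lemma~\ref{012}, followed by the same case analysis through Lemma~\ref{interval} and Remark~\ref{sloj} (your unified treatment of all $x\notin C$ via an uncovered vertex $(x,h^*)$ just compresses the paper's three cases $x\in A$, $x\in B$, $x\notin A\cup B\cup C$ into one). The only cosmetic deviations are using Remark~\ref{sloj} instead of exhibiting the explicit walk $(w,h_1),(x,y),(w,h_2)$ in the neighbor case, and your slightly more careful wording of the $tn(H)=2$ degeneracy, which matches the paper's convention $B=\emptyset$.
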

\begin{proof}
Let $(A,B,C)$ be a toll-dominating triple of $G$. We will prove that there exists a toll set of $G\circ H$ of size $|A|+2\cdot |B|+tn(H)\cdot |C|.$ Let $D$ be a toll set of $H$ and $h_1,h_2$ arbitrary non-adjacent vertices from $D$. We will prove that $S=(A \times \{h_1\}) \cup (B \times \{h_1,h_2\}) \cup C \times D$ is a toll set of $G \circ H.$ Let $(x,y)$ be an arbitrary vertex from $V(G \circ H).$ Suppose first that $x \in C.$ Since $D$ is a toll set of $H$, there exist $h,h'\in D$ and a tolled $h,h'$-walk $W$ in $H$ containing $y$. Then $(x,h),(x,h') \in S$ and $\{x\} \times W$ is a tolled $(x,h),(x,h')$-walk containing $(x,y)$. Suppose now that $x \notin C$. Since $(A,B,C)$ is a toll-dominating triple of $G$, there exist $u,v \in A \cup B \cup C$ such that $(x \in T_G(u,v)-\{u,v\})$ or there exists $w \in B \cup C$ such that $x \in N_G(w).$ If there exist $u,v \in A \cup B \cup C$ such that $x \in T_G(u,v)-\{u,v\}$, then Lemma~\ref{interval} implies that $(x,y) \in T_{G \circ H}((u,h_1),(v,h_1))$. Suppose now that there exists $w \in B \cup C$ such that $w$ is adjacent to $x$. Then $(w,h_1),(x,y),(w,h_2)$ is a tolled walk between two vertices from $S$ containing $(x,y).$

For the converse let $S$ be a minimum toll set of $G \circ H.$ We will prove that there exists a toll-dominating triple $(A,B,C)$ with $|A|+2|B|+tn(H)|C|=|S|.$  Let $A=\{u \in V(G):~ |^u\!H \cap S|=1\}$, $C=\{u \in V(G):~ |^u\!H \cap S|=tn(H)\}$ and $B=\{u \in V(G):~ |^u\!H \cap S|=2\}$ (if $tn(H)=2$, let $B=\emptyset$). Note that it follows from Lemma~\ref{012} that $|S|=|A|+2\cdot |B|+tn(H)\cdot |C|.$ We will prove that $(A,B,C)$ is a toll-dominating triple of $G$. Suppose that there exists $x \notin C$ such that $x \notin T_G(u,v)$ for any $u,v \in A \cup B \cup C$ different from $x$. We will prove that there exists $w \in B \cup C$ that is adjacent to $x$. Suppose first that $x \in A$ and let $(x,h)$ be an arbitrary vertex not in $S$. Since just one vertex from $^x\!H$ lies in $S$ and $S$ is a toll set of $G \circ H$, it follows from Lemma~\ref{interval}, that $(x,h) \in T_{G \circ H}((g_1,h_1),(g_2,h_2))$ for $g_1,g_2 \neq x$. If $g_1=g_2$, then it follows from Remark~\ref{sloj} that $g_1$ is adjacent to $x$ and as $|S \cap (^{g_1}\!H)| \geq 2$, $g_1 \in B \cup C.$ If $g_1 \neq g_2$, then it follows from Lemma~\ref{interval} that $x \in T_G(g_1,g_2)-\{g_1,g_2\}$, a contradiction. If $x \in B$, let $S \cap (^x\!H)=\{(x,h_1),(x,h_2)\}$. Since $x \in B$, $B$ is not empty and hence $tn(H) > 2$. Let $(x,h)$ be an arbitrary vertex not contained in the toll interval between $(x,h_1)$ and $(x,h_2)$. Since  $S$ is a toll set of $G \circ H$, it follows from Lemma~\ref{interval}, that $(x,h) \in T_{G \circ H}((g_1,h_1'),(g_2,h_2'))$ for $g_1,g_2 \neq x$. If $g_1=g_2$, then it follows from Remark~\ref{sloj} that $g_1$ is adjacent to $x$ and as $|S \cap (^{g_1}\!H)| \geq 2$, $g_1 \in B \cup C.$ If $g_1 \neq g_2$, then it follows from Lemma~\ref{interval} that $x \in T_G(g_1,g_2)-\{g_1,g_2\}$, a contradiction. Finally let $x \in V(G)-(A \cup B \cup C).$ Since $S$ is a toll set of $G \circ H$, there exist $(g_1,h_1),(g_2,h_2) \in S$ such that $(x,h) \in T_{G \circ H}((g_1,h_1),(g_2,h_2))$ for any $h \in V(H).$ As $(g_1,h_1),(g_2,h_2) \in S$, $g_1,g_2 \in A \cup B \cup C$ and hence Lemma~\ref{interval} implies that $x \in T_G(g_1,g_2)-\{g_1,g_2\}$, a final contradiction. \qed
\end{proof}

%%%%%%%%%%%%%%%%%%%%%%%%%%%%%%%%%%%%%%%%%%%%%%%%%%%%%%%%%%%%%%%%%%%%%%%     
\section*{Acknowledgements}
%%%%%%%%%%%%%%%%%%%%%%%%%%%%%%%%%%%%%%%%%%%%%%%%%%%%%%%%%%%%%%%%%%%%%
%%%%%%%%%%%%%%%%%%%%%%%%%%%%%%%%%%%%%%%%%%%%%%%%%%%%%%%%%%%%%%%%%%%%%

This research was supported by the internationalisation of Slovene higher education
within the framework of the Operational Programme for Human Resources Development 2007--2013 
and by the Slovenian Research Agency project L7--5459. 

Research of T.\ Gologranc was also supported by Slovenian Research Agency under the grants N1-0043 and  P1-0297.

%%%%%%%%%%%%%%%%%%%%%%%%%%%%%%%%%%%%%%%%%%%%%%%%%%%%%%%%%%%%%%%%%%%%%%%%%%%%%%%%%%%%%%%%%%%%%%%%%%

\end{document}